\newcommand{\dbr}[1]{\left\llbracket #1 \right\rrbracket}
\def\@tocline#1#2#3#4#5#6#7{\relax
  \ifnum #1>\c@tocdepth 
  \else
    \par \addpenalty\@secpenalty\addvspace{#2}%
    \begingroup \hyphenpenalty\@M
    \@ifempty{#4}{%
      \@tempdima\csname r@tocindent\number#1\endcsname\relax
    }{%
      \@tempdima#4\relax
    }%
    \parindent\z@ \leftskip#3\relax \advance\leftskip\@tempdima\relax
    \rightskip\@pnumwidth plus4em \parfillskip-\@pnumwidth
    #5\leavevmode\hskip-\@tempdima
      \ifcase #1
       \or\or \hskip 1em \or \hskip 2em \else \hskip 3em \fi%
      #6\nobreak\relax
    \dotfill\hbox to\@pnumwidth{\@tocpagenum{#7}}\par
    \nobreak
    \endgroup
  \fi}
\LetLtxMacro{\oldsqrt}{\sqrt}
\renewcommand{\sqrt}[2][]{\,\oldsqrt[#1]{#2}\,}
\def\bmu{\boldsymbol \mu}
\newcommand{\abs}[1]{\lvert #1 \rvert}
\newcommand{\zmod}[1]{\mathbb{Z}/ #1 \mathbb{Z}}
\newcommand{\dangle}[1]{\left\langle #1 \right\rangle}
\newcommand{\Lsymb}[2]{\genfrac{(}{)}{}{}{#1}{#2}}    
\newcommand{\dia}{\diamond}
\def\Sq{{\mathrm Sq}}
\def\odd{{\mathrm odd}}
\DeclareMathSymbol{\twoheadrightarrow} {\mathrel}{AMSa}{"10}
\DeclareMathOperator{\ord}{ord}
\DeclareMathOperator{\Emb}{Emb}
\DeclareMathOperator{\Pic}{Pic}
\DeclareMathOperator{\End}{End}
\DeclareMathOperator{\Hom}{Hom}
\DeclareMathOperator{\Mat}{Mat}
\DeclareMathOperator{\Tr}{Tr}
\DeclareMathOperator{\Nm}{N}  
\DeclareMathOperator{\Sp}{Sp}
\newcommand{\scrB}{\mathscr{B}}
\newcommand{\scrD}{\mathscr{D}}
\newcommand{\scrK}{\mathscr{K}}
 \newcommand{\ff}{\mathbb{F}}
\newcommand{\calO}{\mathcal{O}}
\newcommand{\wh}{\widehat}
\DeclareMathOperator{\Cl}{Cl}
\DeclareMathOperator{\Mass}{Mass}
\DeclareMathOperator{\Nr}{Nr}
\newcommand{\wt}{\widetilde}
\newcommand{\grp}{\mathfrak{p}}
\newcommand{\grf}{\mathfrak{f}}
\def\makeop#1{\expandafter\def\csname#1\endcsname
  {\mathop{\rm #1}\nolimits}\ignorespaces}
\def\makebb#1{\expandafter\def
  \csname bb#1\endcsname{{\mathbb{#1}}}\ignorespaces}
\def\makebf#1{\expandafter\def\csname bf#1\endcsname{{\bf
      #1}}\ignorespaces} 
\def\makegr#1{\expandafter\def
  \csname gr#1\endcsname{{\mathfrak{#1}}}\ignorespaces}
\def\makeescr#1{\expandafter\def
  \csname escr#1\endcsname{{\EuScript{#1}}}\ignorespaces}
\def\makecal#1{\expandafter\def\csname cal#1\endcsname{{\mathcal
      #1}}\ignorespaces} 
\def\doLetters#1{#1A #1B #1C #1D #1E #1F #1G #1H #1I #1J #1K #1L #1M
                 #1N #1O #1P #1Q #1R #1S #1T #1U #1V #1W #1X #1Y #1Z}
\def\doletters#1{#1a #1b #1c #1d #1e #1f #1g #1h #1i #1j #1k #1l #1m
                 #1n #1o #1p #1q #1r #1s #1t #1u #1v #1w #1x #1y #1z}
\def\ol{\overline}
\def\wt{\widetilde}
\def\wh{\widehat}
\def\Fpbar{\overline{\bbF}_p}
\def\Fq{{\bbF}_q}
\newcommand{\Z}{\mathbb Z}
\newcommand{\Q}{\mathbb Q}
\newcommand{\R}{\mathbb R}
\newcommand{\C}{\mathbb C}
\newcommand{\F}{\mathbb F}
\newcounter{thmcounter} 
\numberwithin{thmcounter}{section}
\newtheorem{thm}[thmcounter]{Theorem}
\newtheorem{lem}[thmcounter]{Lemma}
\newtheorem{lemma}[thmcounter]{Lemma}
\newtheorem{cor}[thmcounter]{Corollary}
\newtheorem{prop}[thmcounter]{Proposition}
\theoremstyle{definition}
\newtheorem{ex}[thmcounter]{Example}
\newtheorem{notn}[thmcounter]{Notation}
\newtheorem{rem}[thmcounter]{Remark}
\newtheorem{remark}[thmcounter]{Remark}
\newtheorem{sect}[thmcounter]{}
\numberwithin{equation}{section}
\newtheoremstyle{notitle}  
  {}
  {}
  {\itshape}
  {}
  {}
  {\ }
  {.5em}
  {}
\theoremstyle{notitle}
\title[Superspecial abelian surfaces]{On superspecial abelian surfaces
  and type numbers of totally definite quaternion algebras}
\author{Jiangwei Xue and Chia-Fu Yu}
\address{(Xue) Collaborative Innovation Centre of Mathematics, 
School of Mathematics and Statistics, Wuhan University, Luojiashan,
 Wuhan, Hubei, 430072, P.R. China.}
\email{xue\_j@whu.edu.cn}
\address{(Yu) Institute of Mathematics,
  Academia Sinica and NCTS, Astronomy-Mathematics
  Building, No. 1, Sec. 4, Roosevelt Road, Taipei 10617, TAIWAN.}
\email{chiafu@math.sinica.edu.tw} 
\begin{document}

\date{\today} \subjclass[2010]{11R52, 11G10} \keywords{superspecial 
  abelian surfaces, type numbers, class numbers, totally definite quaternion
  algebras.}

\begin{abstract}
  In this paper we determine the number of endomorphism rings of
  superspecial abelian surfaces over a field $\Fq$ of odd degree over
  $\F_p$ in the isogeny class corresponding to the Weil $q$-number
  $\pm\sqrt{q}$.  This extends earlier works of T.-C. Yang and the
  present authors on the isomorphism classes of these abelian
  surfaces, and also generalizes the classical formula of Deuring for
  the number of endomorphism rings of supersingular elliptic curves.
  Our method is to explore the relationship between the type and class
  numbers of the quaternion orders concerned. We study the Picard
  group action of the center of an arbitrary $\Z$-order in a totally
  definite quaternion algebra on the ideal class set of said order,
  and derive an orbit number formula for this action. This allows us
  to prove an integrality assertion of Vign\'eras
  [Enseign. Math. (2), 1975] as follows.  Let $F$ be a totally real
  field of even degree over $\Q$, and $D$ be the (unique up to
  isomorphism) totally definite quaternion $F$-algebra unramified at
  all finite places of $F$. Then the quotient $h(D)/h(F)$ of the class
  numbers is an integer.

 
 


\end{abstract}

\maketitle


\def\Bp{{D_{p,\infty}}}

\section{Introduction}
\label{sec:01}

Throughout this paper $p$ denotes a prime number. 
Let $\Bp$ denote the unique definite quaternion $\Q$-algebra up to
isomorphism ramified exactly at $p$ and $\infty$.
The classical result of Deuring establishes a bijection
between the set $\Lambda_p$ of 
isomorphism classes of supersingular
elliptic curves over $\Fpbar$ and 
the set $\Cl(\Bp)$ of 
ideal classes of a maximal order in $\Bp$. 
The class number $h$ of $\Bp$ is well
known due to Eichler \cite{eichler-CNF-1938}
(Deuring and Igusa gave different proofs of this result), 
and is given by 
\begin{equation}
  \label{eq:1.1}
  h=\frac{p-1}{12}+\frac{1}{3}\left
  (1-\left(\frac{-3}{p}\right )\right )+\frac{1}{4}\left
  (1-\left(\frac{-4}{p}\right )\right ),
\end{equation}
where $\left( \frac{\cdot}{p}\right ) $ denotes the Legendre
symbol. Under the correspondence $\Lambda_p\simeq \Cl(\Bp)$, 
the type number $t$ of $\Bp$ is equal to the number of non-isomorphic 
endomorphism rings of members $E$ in $\Lambda_p$.
An explicit type formula is also well known due
to Deuring~\cite{Deuring1950}, which is given by 
\begin{equation}
  \label{eq:1.2}
  t=\frac{p-1}{24}+\frac{1}{6}\left
  (1-\left(\frac{-3}{p}\right )\right )+
\begin{cases}
  h(-p)/4 & \text{if $p\equiv 1\pmod 4$},\\
  1/4+h(-p)/2 & \text{if $p\equiv 7\pmod 8$},\\
  1/4+h(-p) & \text{if $p\equiv 3\pmod 8$},\\
\end{cases}
\end{equation}
for $p>3$, and $t=1$ for $p=2,3$. Here for any square-free integer
$d\in \bbZ$, we write $h(d)$ for the class number of $\Q(\sqrt{d})$.
Though these classical results were well established by 1950,
different proofs with various ingredients such as mass formulas,
Tamagawa numbers, theta series, cusp forms, algebraic modular forms,
Atkin-Lehner involutions and traces of Hecke operators, have been
generalized and revisited many times even until now.  Different angles
and approaches such as the Eichler-Shimizu-Jacquet-Langlands
correspondence and trace formulas also play important roles in the
development.  This paper is one instance of them, where we would like
to generalize the explicit formulas (\ref{eq:1.1}) and (\ref{eq:1.2})
from $\Bp$ to $\Bp\otimes \Q(\sqrt{p})$, which is the unique totally
definite quaternion $\Q(\sqrt{p})$-algebra (up to isomorphism)
unramified at all finite places. Recall that a quaternion algebra $D$ over
a totally real number field $F$ is said to be \emph{totally definite} if
$D\otimes_{F, \sigma}\R$ is isomorphic to the Hamilton quaternion
algebra $\bbH$ for every embedding $\sigma: F\hookrightarrow \R$.
Our interest for totally definite quaternion algebras stems from the study abelian varieties over finite
fields. Note that $\Bp$ and $\Bp\otimes \Q(\sqrt{p})$ 
are the only two algebras 
that can occur as the endomorphism algebra
of an abelian variety over a finite field \cite{tate:eav}, but do
not satisfy the Eichler condition \cite[Definition~34.3]{reiner:mo}. 
The cases where the endomorphism algebras satisfy the Eicher condition
are easier to treat. Indeed, by a result of Jacobinski \cite[Theorem
2.2]{Jacobinski1968}, the class number of an order in such an
algebra is equal to a ray class number of its center. 
Observe that the number $h$ in \eqref{eq:1.1}
(resp.~$t$ in \eqref{eq:1.2}) is also equal to the number of $\Fq$-isomorphism classes
(resp.~non-isomorphic endomorphism rings) of supersingular
elliptic curves in the isogeny class corresponding to the Weil 
$q$-number $\sqrt{q}$ (or $-\sqrt{q}$), for any field $\Fq$ containing
$\F_{p^2}$. Thus, the present work may be also reviewed as an 
generalization of explicit formulas (\ref{eq:1.1}) 
and (\ref{eq:1.2}) in the arithmetic direction. A generalization of
the geometric direction in the sense that the set $\Lambda_p$ is replaced by the
superspecial locus of the Siegel moduli spaces has been inverstigated
by Hashimoto, Ibukiyama, Katsura and Oort
\cite{Hashimoto-Ibukiyama-1, Hashimoto-Ibukiyama-2,
  Hashimoto-Ibukiyama-3, Ibukiyama-Katsura-Oort-1986,
  katsura-oort:surface}. 

Now let $q$ be an odd power of $p$, and let $\Sp(\sqrt{q})$ be the set
of isomorphism classes of superspecial abelian surfaces over 
$\Fq$ in the isogeny class corresponding to the Weil number 
$\pm \sqrt{q}$. 
As a generalization of (\ref{eq:1.1}), T.-C. Yang and the present authors
\cite[Theorem 1.2]{xue-yang-yu:ECNF} (also see \cite[Theorem
1.3]{xue-yang-yu:sp_as}) show the following explicit 
formula for $|\Sp(\sqrt{q})|$.

\begin{thm}\label{1.1}
  Let $F=\Q(\sqrt{p})$, and $O_F$ be its ring of integers. 

(1) The cardinality of $\Sp(\sqrt{q})$ depends only on $p$, and is
    denoted by $H(p)$. 

(2) We have $H(p)=1,2,3$ for $p=2,3,5$, respectively.

(3) For $p>5$ and $p\equiv 3 \pmod 4$, one has
\begin{equation*}
  H(p)=h(F)\left [  \frac{\zeta_F(-1)}{2} +
     \left(13-5 \left(\frac{2}{p}\right)
    \right)\frac{h(-p)}{8}+\frac{h(-2p)}{4}+\frac{h(-3p)}{6}  \right ],
\end{equation*}
where $h(F)$ is the class number of $F$ and $\zeta_F(s)$ is the
Dedekind zeta function of $F$. 

(4) For $p>5$ and $p\equiv 1 \pmod 4$, one has
\begin{equation*}
   H(p)=       
      \begin{cases}
        h(F) \left [8 \zeta_F(-1)+ h(-p)/2+\frac{2}{3}
        h(-3p) \right ] & \text{for $p\equiv 1 \pmod 8$;} \\
      h(F) \left [\left(\frac{45+\varpi}{2\varpi}\right) \zeta_F(-1)+\left
        (\frac{9+\varpi}{8 \varpi}\right ) h(-p)
      +\frac{2}{3} h(-3p) \right ] & \text{for $p\equiv 5 \pmod 8$;} \\
      \end{cases} 
\end{equation*}
where $\varpi:=[O_F^\times: A^\times]\in\{1,3\}$ and
$A:=\Z[\sqrt{p}]$ is the suborder of index $2$ in $O_F$. 
\end{thm}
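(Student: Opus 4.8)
The plan is to turn the geometric enumeration into the computation of a single quaternion class number over $F=\Q(\sqrt p)$, and then to evaluate that class number by the Eichler mass formula together with a count of torsion units via optimal embeddings. First I would recall the Deuring-type dictionary underlying \eqref{eq:1.1} and its extension to abelian surfaces. Since $q$ is an odd power of $p$, the Weil number $\pm\sqrt q$ is central and $\Q(\sqrt q)=F$, so every member of the isogeny class has endomorphism algebra $D:=\Bp\otimes_{\Q}F$. A place-by-place check shows $D$ is totally definite and unramified at every finite place: $p$ ramifies in $F$, so the quadratic completion $F_{\p}$ splits the division algebra $\Bp\otimes\Q_p$, while each real place of $F$ remains ramified. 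The superspecial condition (together with the principal polarization) pins down the local structure of the endomorphism ring at the dyadic and $p$-adic places, and by \cite{xue-yang-yu:ECNF,xue-yang-yu:sp_as} this identifies $\Sp(\sqrt q)$ with the ideal class set $\Cl(O)$ of one explicit $\Z$-order $O\subset D$. It therefore suffices to compute $h(O):=|\Cl(O)|$.

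To evaluate $h(O)$ I would use the elementary identity
\[
  h(O)=\Mass(O)+\sum_{i:\,w_i>1}\Bigl(1-\tfrac{1}{w_i}\Bigr),
\]
where $\Mass(O)=\sum_i 1/w_i$ runs over the ideal classes and $w_i$ is the order of the finite group $O_i^\times/O_F^\times$ for the right order $O_i$ of the $i$-th ideal. The Eichler mass formula, applied to the totally definite algebra $D$ of relative degree $2$ and adjusted by the local index factors of $O$ at $p$ and at $2$, produces the leading term. Because $D$ carries no finite ramification, the usual product over ramified primes is empty, and one obtains a clean multiple of $h(F)\,\zeta_F(-1)$ whose coefficient ($\tfrac12$, or $8$, or $(45+\varpi)/2\varpi$) records precisely those local indices; this matches the first summand in each case of the theorem.

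The correction $\sum(1-1/w_i)$ is controlled by the finite-order elements of $O^\times$ modulo $O_F^\times$. As $F$ is totally real, such elements come from embedded CM subfields $F(\zeta_m)$ with $m\in\{3,4,6,8\}$, that is, from optimal embeddings into $O$ of the imaginary quadratic orders attached to $\Q(\sqrt{-p})$, $\Q(\sqrt{-2p})$, and $\Q(\sqrt{-3p})$. By Eichler's embedding theory, the number of optimal embeddings of a fixed quadratic order, summed over the ideal classes of the genus, equals the class number of that order times a product of local embedding numbers; up to elementary unit and conductor factors these global class numbers are exactly $h(-p)$, $h(-2p)$, $h(-3p)$. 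Carrying out the local embedding-number computation at $2$, $3$, and $p$ — which is where the splitting type of $2$ in $F$, hence the value of $p\bmod 8$, enters — converts these into the stated rational coefficients; in particular the vanishing of the $h(-2p)$ term for $p\equiv 1\pmod 4$ reflects the failure of the relevant order-$8$ (i.e. $\sqrt{-2}$) embeddings at the dyadic place.

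Finally I would assemble the mass term and the three embedding contributions case by case ($p\equiv3\bmod4$; $p\equiv1\bmod8$; $p\equiv5\bmod8$), and dispose of $p=2,3,5$ by direct inspection. I expect the main obstacle to be the local analysis at the bad primes: determining the precise local orders $O_{\p}$ and $O_2$ forced by the superspecial and polarization conditions, and then computing the local optimal embedding numbers in the inert case $p\equiv5\bmod8$, where the non-maximal suborder $A=\Z[\sqrt p]$ of index $2$ intervenes. Tracking the interaction between $O_F^\times$, $A^\times$, and the embedded torsion is the delicate bookkeeping that forces the unit index $\varpi=[O_F^\times:A^\times]\in\{1,3\}$ into the coefficients of part~(4); as a consistency check, setting $\varpi=3$ collapses the $p\equiv5\bmod8$ formula onto the $p\equiv1\bmod8$ one.
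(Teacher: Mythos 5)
Your reduction of the whole problem to the class number of \emph{one} order $O\subset D$ is where the argument breaks down, and it is precisely what the case division in the theorem is recording. This paper does not reprove Theorem~\ref{1.1} (it quotes it from \cite[Theorem 1.2]{xue-yang-yu:ECNF} and \cite[Theorem 1.3]{xue-yang-yu:sp_as}), but the structure it recalls in Section~\ref{sec:03}, equation \eqref{eq:3.3}, is that $\Sp(\sqrt{q})\simeq\coprod_{r}\Cl(\bbO_r)$, where $r=1$ only when $p\not\equiv 1\pmod 4$, while $r$ ranges over $\{1,8,16\}$ when $p\equiv 1\pmod 4$. Here $\bbO_1$ is a maximal $O_F$-order, and $\bbO_8,\bbO_{16}$ are \emph{proper} $\Z[\sqrt p]$-orders of index $8$ and $16$ in $\bbO_1$, distinguished by their completions at the dyadic place (see \eqref{eq:bbOr1}--\eqref{eq:bbOr2}). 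The point is that over $\Fq$ the endomorphism rings need only be proper $\Z[\sqrt p]$-orders, not $O_F$-orders, and when $\Z[\sqrt p]\neq O_F$ they fall into three distinct genera; classifying them and proving the resulting bijection is the content of \cite[Theorem 6.1.2]{xue-yang-yu:ECNF}, which is the missing idea in your proposal. Consequently $H(p)=h(\bbO_1)+h(\bbO_8)+h(\bbO_{16})$ for $p\equiv 1\pmod 4$, and no mass-plus-unit-defect computation for a single order can produce this sum. You can see the failure in the leading coefficient for $p\equiv 1\pmod 8$: the term $8\zeta_F(-1)$ arises as $\tfrac12+\tfrac92+3$, the separate contributions of the three genera (compare \eqref{eq:type-O1-p1mod4}, \eqref{eq:tO8}, \eqref{eq:tO16}), not as the mass of any one natural order in $D$.

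For $p\equiv 3\pmod 4$ your outline is essentially the correct (and the cited paper's) route: there $\Z[\sqrt p]=O_F$, all endomorphism rings are maximal orders forming a single genus, and Eichler's class number formula --- mass term plus an optimal-embedding count in which the CM quartic fields $F(\sqrt{-1})$, $F(\sqrt{-2})$, $F(\sqrt{-3})$ supply, via Herglotz-type relative class number factorizations, the terms $h(-p)$, $h(-2p)$, $h(-3p)$ --- yields part (3). But even granting the genus decomposition in part (4), two further repairs are needed: your embedding-theoretic evaluation of $\sum(1-1/w_i)$ must be carried out for the non-maximal, non-Eichler orders $\bbO_8$ and $\bbO_{16}$, where the standard Eichler theory you invoke does not apply off the shelf (this is done by hand in \cite{xue-yang-yu:ECNF}); and $\Sp(\sqrt q)$ consists of unpolarized surfaces, so there is no ``principal polarization'' available to pin down the local structure of the endomorphism rings --- that determination has to come from the superspecial condition and the Frobenius $\pi=\pm\sqrt q$ alone.
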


Let $\calT(\Sp(\sqrt{q}))$ denote the set of isomorphism classes of
endomorphism rings of abelian surfaces in $\Sp(\sqrt{q})$. 
The cardinality of $\calT(\Sp(\sqrt{q}))$ again depends only on the prime
$p$ (\cite[Theorem 1.3]{xue-yang-yu:sp_as},  see also Sect.~3), 
and is denoted by $T(p)$. In this paper we give an 
explicit formula for $T(p)$, which generalizes (\ref{eq:1.2}). 

\begin{thm}\label{1.2} 
Let $F=\Q(\sqrt{p})$ and $T(p):=|\calT(\Sp(\sqrt{q}))|$. 

(1) We have $T(p)=1,2,3$ for $p=2,3,5$, respectively.

(2) For $p\equiv 3 \pmod 4$ and $p\ge 7$, we have
\begin{equation}
  \label{eq:1.5}
  T(p)= \frac{\zeta_F(-1)}{2} +
     \left(13-5\left(\frac{2}{p}\right)
    \right)\frac{h(-p)}{8}+\frac{h(-2p)}{4}+\frac{h(-3p)}{6}. 
\end{equation}

(3) For $p\equiv 1 \pmod 4$ and $p\ge 7$, we have
\begin{equation}
  \label{eq:1.6}
  T(p)= 8 \zeta_F(-1) +
     \frac{1}{2}h(-p)+ \frac{2}{3} h(-3p). 
\end{equation}
\end{thm}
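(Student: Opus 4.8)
The plan is to recast the enumeration of endomorphism rings as a \emph{type number} computation for quaternion orders, and then to read that type number off from the class number $H(p)$ by analyzing the Picard action of the center. First I would invoke the dictionary set up in Section~3 (following \cite{xue-yang-yu:sp_as}): fixing a base surface $A_0\in\Sp(\sqrt q)$ and writing $O=\End(A_0)$, an order in the totally definite quaternion $F$-algebra $D=\Bp\otimes_\Q F$ (which is unramified at all finite places of $F=\Q(\sqrt p)$), one has a bijection $\Sp(\sqrt q)\cong\Cl(O)$ under which the surface attached to an ideal class $[I]$ has endomorphism ring the left order $O_l(I)$. Consequently $\calT(\Sp(\sqrt q))$ is identified with the type set $\mathrm{Typ}(O)$ of isomorphism classes of orders in the genus of $O$, so that $T(p)=|\mathrm{Typ}(O)|$ and the problem becomes the computation of a type number.

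The core tool is the action of $\Pic(B)$, where $B=O\cap F$ is the center of $O$, on $\Cl(O)$ by multiplication by invertible central ideals. Since a central ideal does not change the left order, the map $[I]\mapsto[O_l(I)]$ is constant on $\Pic(B)$-orbits; and because $D$ is unramified at all finite places, one checks locally that every two-sided ideal of a maximal order is central, so that $\mathrm{Typ}(O)=\Cl(O)/\Pic(B)$. I would then establish the \emph{orbit number formula} by Burnside's lemma: $|\mathrm{Typ}(O)|=\tfrac{1}{|\Pic(B)|}\sum_{[\mathfrak a]}\#\mathrm{Fix}([\mathfrak a])$, the identity term returning $|\Cl(O)|=H(p)$, while a class $[\mathfrak a]\neq 1$ fixes $[I]$ precisely when $O_l(I)$ contains an element generating a CM extension of $F$ whose associated central ideal lies in $[\mathfrak a]$, i.e.\ when a suitable quadratic order optimally embeds into $O_l(I)$. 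Applied to the everywhere-unramified \emph{maximal} order over the even-degree field $F$, one shows these stabilizers are trivial, so the action is free; this yields Vign\'eras's integrality $h(D)/h(F)\in\Z$ and, in the favorable cases, the equality $T(p)=H(p)/h(F)$.

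It then remains to feed in Theorem~\ref{1.1}. For $p\equiv 3\pmod 4$ and for $p\equiv 1\pmod 8$ the action is free with $B=O_F$, so $T(p)=H(p)/h(F)$ is exactly the bracketed expression of Theorem~\ref{1.1}, giving \eqref{eq:1.5} and \eqref{eq:1.6}; here the terms $h(-p),h(-2p),h(-3p)$ originate from the elliptic (CM) terms of the Eichler class number formula already folded into $H(p)$, not from Burnside corrections. The delicate case is $p\equiv 5\pmod 8$, where $2$ is inert in $F$ and the relevant orders are non-maximal at $2$ with center $\Z[\sqrt{p}]$; there $\Pic(\Z[\sqrt{p}])$ replaces $\Pic(O_F)$, the action is no longer free, and the unit index $\varpi=[O_F^\times:\Z[\sqrt{p}]^\times]$ enters. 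I would compute the nontrivial fixed-point contributions from optimal embeddings and combine them with the $\varpi$-dependent expression for $H(p)$, so that the two $p\equiv 1\pmod 4$ subcases collapse to the single uniform formula \eqref{eq:1.6}; the primes $p=2,3,5$ are checked by hand.

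The main obstacle is precisely this fixed-point computation for the non-Eichler orders: establishing the correct optimal-embedding numbers, controlling the selectivity phenomenon (not every order in the genus need admit a given embedding), and pinning down the local structure at $2$ and at the prime $\mathfrak p\mid p$, so that the $\varpi$-dependence visible in $H(p)$ is exactly cancelled to leave the clean type formula \eqref{eq:1.6}. The remaining steps — the reduction to $\mathrm{Typ}(O)$, the orbit number formula, and the evaluation of the mass term in $H(p)$ via $\zeta_F(-1)$ — are comparatively routine once the embedding bookkeeping is in place.
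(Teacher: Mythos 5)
Your reduction step is where the argument breaks down. For $p\equiv 1\pmod 4$ the set $\Sp(\sqrt{q})$ is \emph{not} in bijection with the class set of a single order: by the paper's (\ref{eq:3.3}) (from \cite{xue-yang-yu:ECNF} and \cite{xue-yang-yu:sp_as}), the endomorphism rings fall into \emph{three} distinct genera, namely the maximal order $\bbO_1$ and two non-maximal proper $\Z[\sqrt{p}]$-orders $\bbO_8,\bbO_{16}$ -- and this happens for all $p\equiv 1\pmod 4$, not only for $p\equiv 5\pmod 8$ as your third paragraph suggests. Hence $T(p)=t(\bbO_1)+t(\bbO_8)+t(\bbO_{16})$ as in (\ref{eq:3.4}), and your identification $T(p)=\abs{\mathrm{Typ}(O)}$ for one order $O=\End(A_0)$ is false. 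Moreover, for the non-maximal genera the passage from type sets to Picard orbits is not the formal statement you invoke; it rests on the normalizer computations $\calN(\wh \bbO_8)=\wh F^\times \wh \bbO_4^\times$, where $\bbO_4=O_F\bbO_8$ is an Eichler order of level $2O_F$, and $\calN(\wh \bbO_{16})=\wh F^\times \wh \bbO_{16}^\times$, which give $\calT(\bbO_8)\simeq \Cl(\bbO_4)/\Pic(O_F)$ and $\calT(\bbO_{16})\simeq \Cl(\bbO_{16})/\Pic(A)$ as in (\ref{eq:3.5}). Note that the $\bbO_8$-genus is governed by the class set of a \emph{different} order $\bbO_4$; even the equality $T(p)=H(p)/h(F)$ for $p\equiv 1\pmod 8$, which you take as immediate, ultimately rests on the numerical coincidence $h(\bbO_8)=h(\bbO_4)$ observed in Remark~\ref{3.3}(1), which your framework has no means to detect.

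Second, your freeness claim is unjustified and, as stated, false: triviality of the stabilizers does \emph{not} follow from $D$ being unramified at all finite places -- that is precisely the misconception of Vign\'eras which the paper corrects (Ponomarev's counterexamples), and the general integrality of $h(D)/h(F)$ requires the separate argument of Section~\ref{sec:integrality}, not freeness. What makes everything work for $F=\Q(\sqrt{p})$ is the arithmetic fact that $h(F)$ and $h(A)$, $A=\Z[\sqrt{p}]$, are \emph{odd}, so that by Corollary~\ref{endo.4} the Picard actions on $\Cl(\bbO_1)$, $\Cl(\bbO_4)$ and $\Cl(\bbO_{16})$ are all free (Proposition~\ref{3.1}). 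Once this is in place, the paper's proof of Theorem~\ref{1.2} needs \emph{no} Burnside fixed-point computation at all, including for $p\equiv 5\pmod 8$: one simply divides explicit class numbers, $t(\bbO_1)=h(\bbO_1)/h(F)$, $t(\bbO_8)=h(\bbO_4)/h(F)$ with $h(\bbO_4)$ computed by Eichler's class number formula (Lemma~\ref{3.2}), and $t(\bbO_{16})=h(\bbO_{16})/h(A)$, then uses Herglotz's relative class number formula together with $h(A)=3h(F)/\varpi$ to cancel the $\varpi$-dependence and collapse the two subcases to (\ref{eq:1.6}). The optimal-embedding/fixed-point analysis you propose for $p\equiv 5\pmod 8$, $\varpi=1$ appears in the paper only as an independent verification that $r(\bbO_8)=t(\bbO_8)$ (Subsection~\ref{sect:orbit-O8}), not as part of the proof of Theorem~\ref{1.2}. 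Your treatment of the case $p\equiv 3\pmod 4$ (single genus, maximal order, $T(p)=H(p)/h(F)$) does agree with the paper, but the $p\equiv 1\pmod 4$ cases, which are the substance of the theorem, cannot be repaired without the three-genus decomposition and the normalizer/class-number ingredients above.
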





It follows from Theorems~\ref{1.1} and \ref{1.2} that $H(p)=T(p) h(F)$
except for the case where $p\equiv 5 \pmod 8$ and $\varpi=1$. When $p\equiv 3\pmod
4$, we actually prove this result first and use it to get formula (\ref{eq:1.5}). For the case where $p\equiv 1 \pmod 4$, we
explain how this coindence arises in part (1) of Remark~\ref{3.3}.

Similar to Deuring's result for supersingular elliptic curves, 
the proof of Theorem~\ref{1.2} is reduced to the calculation of the type
number of a maximal order $\bbO_1$ in in $\Bp\otimes \Q(\sqrt{p})$, as
well as those of certain proper $\Z[\sqrt{p}]$-orders $\bbO_{8}$ and
$\bbO_{16}$ (see (\ref{eq:bbOr1}) and (\ref{eq:bbOr2}) for definition
of these orders).  We recall briefly the concept of \emph{proper
  $A$-orders}. Henceforth all orders are assumed to be of full rank in
their respective ambient algebras. Let $F$ be a number field, and $A$ be a
$\Z$-order in $F$ (not necessarily maximal). A $\Z$-order $\calO$ in a
finite dimensional semisimple $F$-algebra $\scrD$ is said to be a
proper $A$-order if $\calO\cap F=A$. Unlike
\cite[Definition~23.1]{curtis-reiner:1}, we do not require $\calO$ to
be a projective $A$-module. The class number of $\calO$ 
is denoted by $h(\calO)$. When $F$ is a totally real number field, a
proper $A$-order in a CM-extension (i.e. a totally imaginary
quadratic extension) of $F$ is call a \emph{CM proper $A$-order}. If
$A$ coincides with $O_F$, then we drop the adjective ``proper'' and simply write ``$O_F$-order''.

It is well known that the type number of a  totally definite 
Eichler order of square-free level can be calculated by 
Eichler's trace formula \cite{eichler:crelle55}; see also \cite{korner:1987} for general $O_F$-orders. 
Some errors of Eichler's formula were found and corrected later 
independently by M.~Peters \cite{Peters1968} and by
A.~Pizer~\cite{Pizer1973}. Eichler's trace formula contains a
number of data which are generally not easy to compute. 
In \cite[p.~212]{vigneras:inv} M.-F, Vign\'eras 
gave an explicit formula for the type number
of any totally definite quaternion
algebra $D$, over any real quadratic field $F=\Q(\sqrt{d})$, that is  
unramified at all finite places. Her formula was based on 
the explicit formula for the class number $h(D)$ 
in \cite[Theorem 3.1]{vigneras:ens} and 
the class-type number relationship $h(D)=t(D)h(F)$; 
see \cite[p.~212]{vigneras:inv}.
Unfortunately, it was pointed out by 
Ponomarev in \cite[Concluding remarks, p.~103]{ponomarev:aa1981} 
that the formula in
\cite[p.~212]{vigneras:inv} is not a formula for $t(D)$ in general. 
This conclusion is based on his explicit calculations for class
numbers of positive definite quaternary quadratic forms \cite[Sect.~5]
{ponomarev:aa1981}, and the correspondence between quaternary quadratic
forms and types of quaternion algebras established in
\cite[Sect.~4]{ponomarev:aa1976}.  
The source of the difficulty is that the class-type number
relationship $h(D)=t(D)h(F)$ may fail in general\footnote{In \cite[p.~103]
{ponomarev:aa1981} it reads 
``the class number 
of $\grA_K$ divided by the proper class number of $K$ is not, in
general,  the
type number $t$''. However it should read ``the class number'' instead of
``the proper class number'' as the former one is what 
Vign\'eras' formula is based on.}, 
even if $D$ is unramified at all finite places. To remedy this, we
examine more closely the Picard group action described below.


Let $F$ be an arbitrary number field, $A\subseteq O_F$ a $\Z$-order in
$F$, and $\calO$ a proper $A$-order in a quaternion $F$-algebra $D$. The Picard group $\Pic(A)$
acts naturally on the finite set $\Cl(\calO)$ of locally
principal right  ideal classes of $\calO$ by the map
\begin{equation}
  \label{eq:8}
\Pic(A)\times
\Cl(\calO)\to \Cl(\calO), \qquad ([\gra], [I])\mapsto [\gra I], 
\end{equation}
where $\gra$ (resp.~$I$) denotes a locally principal fractional
$A$-ideal in $F$ (resp.~right $\calO$-ideal in $D$),  and $[\gra]$
(resp.~$[I]$) denotes its ideal class.  Let  $\overline{\Cl}(\calO)$ be
the set of orbits of this action, and $r(\calO)$ be its cardinality: 
\begin{equation}
  \label{eq:21}
r(\calO):=\abs{\overline{\Cl}(\calO)}=\abs{\Pic(A)\backslash
  \Cl(\calO)}.
\end{equation}
One of main results of the this paper is a formula for
$r(\calO)$ when $D$ is a totally definite quaternion algebra over a
totally real field.

  \begin{thm}\label{thm:main}
Let $A$ and $\calO$ be as above. Suppose that $F$ is a totally real number field, and $D$ is a totally definite quaternion $F$-algebra. The number of orbits of the Picard group
action (\ref{eq:8}) can be calculated by the following formula: 
\[r(\calO)=\frac{1}{h(A)}\left(h(\calO)+\sum_{1\neq [\gra]\in
      \grC_2(A, \wt A)}\ \sum_{\dbr{B}\in \scrB_{[\gra]}} \frac{1}{2}(2-\delta(B))h(B)\prod_{\ell}m_\ell(B) \right),\]
where we refer to (\ref{eq:def-grC2}), (\ref{eq:13}), (\ref{eq:46}),
(\ref{eq:35}) for the definitions of $\grC_2(A, \wt A)$, $\scrB_{[\gra]}$, $\delta_B$,  $m_\ell(B)$ respectively. 
  \end{thm}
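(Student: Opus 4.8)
The plan is to count the orbits by Burnside's lemma (the Cauchy--Frobenius formula). Since $\Pic(A)$ is a finite abelian group of order $h(A)$ acting on the finite set $\Cl(\calO)$, the number of orbits is
\[
r(\calO)=\frac{1}{h(A)}\sum_{[\gra]\in\Pic(A)}\abs{\mathrm{Fix}([\gra])},\qquad \mathrm{Fix}([\gra]):=\{[I]\in\Cl(\calO): [\gra I]=[I]\}.
\]
The identity class $[\gra]=1$ fixes every ideal class and contributes $\abs{\Cl(\calO)}=h(\calO)$, which is the leading term of the asserted formula, and the prefactor $1/h(A)$ is precisely the Burnside normalization. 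It then remains to evaluate $\abs{\mathrm{Fix}([\gra])}$ for the nontrivial classes and to show that only those in $\grC_2(A,\wt A)$ contribute.

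First I would show that a nontrivial $[\gra]$ can fix an ideal class only if $[\gra]\in\grC_2(A,\wt A)$. By definition $[\gra I]=[I]$ means $\gra I=Ix$ for some $x\in D^\times$. Since $\gra$ is a fractional $A$-ideal in the center $F$, taking reduced norm ideals turns the left-hand side into $\gra^2\,\Nrd(I)$ and the right-hand side into $\Nrd(x)\,\Nrd(I)$, so that $\gra^2=\Nrd(x)A$ as $A$-ideals. As $D$ is totally definite, $\Nrd(x)$ is totally positive, and hence $[\gra]$ lies in the two-torsion-type subgroup $\grC_2(A,\wt A)$ of (\ref{eq:def-grC2}). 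This is exactly the index set of the outer sum, and it is here that the totally definite hypothesis enters essentially; it also guarantees the finiteness of $\Cl(\calO)$ and of the relevant unit groups.

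Next, for a fixed nontrivial $[\gra]\in\grC_2(A,\wt A)$ I would compute $\abs{\mathrm{Fix}([\gra])}$ by relating fixed classes to optimal embeddings of CM proper $A$-orders. If $[I]\in\mathrm{Fix}([\gra])$, choose $x$ with $\gra I=Ix$ and $\Nrd(x)$ a generator of $\gra^2$; since $[\gra]\neq 1$ the element $x$ lies outside $F$, so $K:=F(x)$ is a CM-extension of $F$ and $x$ is a root of its reduced characteristic polynomial $t^2-\Trd(x)t+\Nrd(x)$. The left order $\calO_l(I)$ is again a proper $A$-order locally conjugate to $\calO$, and $B:=K\cap\calO_l(I)$ is a CM proper $A$-order in $K$ into which $x$ exhibits an optimal embedding; conversely such embedding data reconstruct the fixed class $[I]$ up to a controlled ambiguity. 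Grouping the fixed classes according to the isomorphism class $\dbr B\in\scrB_{[\gra]}$ of this CM order and invoking an Eichler-type optimal-embedding formula, the number of fixed classes attached to $B$ equals $h(B)\prod_\ell m_\ell(B)$, while the rational factor $\tfrac12(2-\delta(B))$ records the two-fold ambiguity $x\leftrightarrow\bar x$ and its possible degeneration, as measured by $\delta(B)$. Summing over $\scrB_{[\gra]}$ and then over $[\gra]$ yields the stated formula.

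The main obstacle is this last step: setting up the correspondence between $\mathrm{Fix}([\gra])$ and optimal embeddings precisely enough to read off the exact index set $\scrB_{[\gra]}$, the local embedding numbers $m_\ell(B)$, and especially the correction $\tfrac12(2-\delta(B))$. The delicate points are to determine, prime by prime and then adelically, which CM orders $B$ actually occur, and to decide when the passage from an optimal embedding back to a fixed ideal class is two-to-one versus one-to-one --- this dichotomy is exactly what $\delta(B)$ encodes, and controlling it requires a careful analysis of the pertinent local and global unit groups together with the action of $\gra$ on each ideal class.
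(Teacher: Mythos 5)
Your overall strategy coincides with the paper's: Burnside's lemma, with the trivial class contributing $h(\calO)$, and the reduced-norm/total-positivity argument showing that only classes in $\grC_2(A,\wt A)$ can fix anything. Up to that point you match the paper (one imprecision: taking norms of $\gra \calO_l(I)=\lambda\calO_l(I)$ yields $\gra^2\wt A=\Nr(\lambda)\wt A$ as $\wt A$-ideals, not $\gra^2=\Nr(x)A$ as $A$-ideals; this is exactly why $\grC_2(A,\wt A)$ is defined through $\Pic(\wt A)$ rather than $\Pic(A)$). The problem is that everything after this --- the evaluation of $\abs{\mathrm{Fix}([\gra])}$, which is the actual content of the theorem (Proposition~\ref{prop:fixed-number} in the paper) --- is precisely what you set aside as ``the main obstacle,'' so the proof is not complete. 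Worse, the sketch you give of that step would fail as stated: you propose to group the \emph{fixed classes} according to the isomorphism class $\dbr{B}$ and to claim that the number attached to a given $B$ is $h(B)\prod_\ell m_\ell(B)$. No such partition of the fixed classes exists. If $[I]$ is fixed, the set of witnesses $X=\{\lambda\in D^\times \mid \gra\calO_l(I)=\lambda\calO_l(I)\}$ is a full orbit under right multiplication by $\calO_l(I)^\times$, and the order $F(\lambda u)\cap\calO_l(I)$ genuinely varies as $u$ runs over $\calO_l(I)^\times$; a single fixed class therefore contributes to several $\dbr{B}$ at once, and $h(B)\prod_\ell m_\ell(B)$ is not the cardinality of any set of ideal classes.

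The missing idea is the paper's weighted count of witnesses rather than of classes. Writing $\calO_j:=\calO_l(I_j)$ and $X(\calO_j):=\{\lambda\in D^\times\mid \gra\calO_j=\lambda\calO_j\}$, the group $\calO_j^\times$ acts simply transitively on $X(\calO_j)$ whenever this set is nonempty, so $\abs{X(\calO_j)/A^\times}/w(\calO_j)$ equals $1$ for each fixed class and $0$ otherwise, where $w(\calO_j)=[\calO_j^\times:A^\times]$. It is $X(\calO_j)$ --- not the set of fixed classes --- that is partitioned by $B_\lambda=F(\lambda)\cap\calO_j$, and for each $B$ there is an $A^\times$-equivariant surjection $X(B)\times\Emb(B,\calO_j)\to X(\calO_j,\dbr{B})$, $(\lambda,\varphi)\mapsto\varphi(\lambda)$, which is bijective when $\delta(B)=0$ and two-to-one when $\delta(B)=1$ (the fibers come from $\varphi\leftrightarrow\bar\varphi$, possible only when $B=\bar B$). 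Combining $\abs{X(B)/A^\times}=w(B)$ with $m(B,\calO_j)=m(B,\calO_j,\calO_j^\times)\,w(\calO_j)/w(B)$ gives
\[
\frac{\abs{X(\calO_j,\dbr{B})/A^\times}}{w(\calO_j)}=\frac{1}{2}\bigl(2-\delta(B)\bigr)\,m(B,\calO_j,\calO_j^\times),
\]
and summing over $j$ via Eichler's formula $\sum_j m(B,\calO_j,\calO_j^\times)=h(B)\prod_\ell m_\ell(B)$, then exchanging the sums over $j$ and $\dbr{B}$, produces the fixed-point count. This unit-weighted bookkeeping is what makes the factor $\frac{1}{2}(2-\delta(B))$ and the product $h(B)\prod_\ell m_\ell(B)$ appear; without it, your outline cannot be completed as written.
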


If $A=O_F$, then the formula for $r(\calO)$ can be simplified further (see Theorem~\ref{thm:orbit-num-formula}).

Assume that $F$ is a totally real field of even degree over $\Q$, and
$D$ is the unique up to isomorphism quaternion $F$-algebra unramified
at all finite places of $F$.  Let $\calO$ be a maximal $O_F$-order
in $D$. Then $t(D)=r(\calO)$, and hence Theorem~\ref{thm:main} leads
to a type number formula for such $D$ in
Corollary~\ref{cor:type-numer-tot-def-unram}. In  \cite[Remarque, p.~82]{vigneras:ens},
Vig\'enras asserted that $h(D)/h(F)$ is always an integer.  However, the
assertion was mixed with the misconception that $h(D)=t(D)h(F)$ holds
unconditionally on $F$, and we could not locate a precise proof of this
integrality elsewhere. As an application of our orbit number formula, we prove
in Theorem~\ref{thm:integrality} that $h(D)/h(F)\in \bbN$ for all $F$. On the other hand, we  give in
Corollary~\ref{2.7}  a necessary and
sufficient condition on $F$ such that the
relationship $h(D)=h(F)t(D)$ remains valid. In particular, for
real quadratic fields satisfying this condition, 
Vig\'enras's formula \cite[p.~212]{vigneras:inv} does give a formula for the
type number $t(D)$. This approach of calculating the type number via the class number also paves the way to our proof of
Theorem~\ref{1.2}, where we treat certain orders (namely, $\bbO_8$
and $\bbO_{16}$) that does not contain
$O_F$.  The type numbers of such orders are not covered by previous
methods of Eichler-Pizer and Ponomarev.

This paper is organized as follows. In Section~\ref{sec:02} we provide
some preliminary studies
 on the $\Pic(A)$-action on $\Cl(\calO)$. This is carried out more in depth for totally definite
quaternion algebras in Section~\ref{sec:orbit-number-formula},  and we
derive the orbit number formula and its corollaries there.  The calculations
for Theorem~\ref{1.2} are worked out in
Section~\ref{sec:03}, and we prove the integrality of $h(D)/h(F)$ in
Section~\ref{sec:integrality}.

\section{Preliminaries on the Picard group action}
\label{sec:02}

Let $F$ be a number field, $O_F$ its ring of integers, and
$A\subseteq O_F$ a $\Z$-order in $F$. Let $D$ be a quaternion
$F$-algebra and $\calO$ a {\it proper} $A$-order in $D$. This
section provides a preliminary study of the $\Pic(A)$-action on
$\Cl(\calO)$ in (\ref{eq:8}).



We follow the notation of \cite[Sections
2.1--2.3]{xue-yang-yu:ECNF}. Recall that $D$ admits a canonical
involution $x\mapsto \bar x$ such that $\Tr(x)=x+\bar x$ and
$\Nr(x)=x\bar x$ are respectively the \emph{reduced trace} and \emph{reduced norm}
of $x\in D$.  The \emph{reduced discriminant} $\grd(D)$ of $D$ is the product of
all finite primes of $F$ that are ramified in $D$. 
Let $\wt A:=\Nr_A(\calO)$ be the norm of $\calO$ over
$A$. More explicitly, $\wt
A$ is the $A$-submodule of $O_F$ spanned by the reduced norms of
elements of $\calO$. Clearly,  $\wt A$ is closed under
multiplication, hence a suborder of $O_F$ containing $A$. By
\cite[Lemma~3.1.1]{xue-yang-yu:ECNF}, $\calO$ is closed under the
canonical involution if and only if $\wt A=A$. In particular, any
$O_F$-order in $D$ is closed under the canonical involution.  We have a the natural surjective map between the Picard groups:
\begin{equation}
  \label{eq:9}
  \pi:\Pic(A)\twoheadrightarrow \Pic(\wt A),\qquad \gra \mapsto \gra \wt A.
\end{equation}

Given an ideal class $[I]\in \Cl(\calO)$, we study the stabilizer
$\Stab([I])\subseteq \Pic(A)$ of the $\Pic(A)$ action on $\Cl(\calO)$ as in (\ref{eq:8}).  Let
$I^{-1}$ be the inverse of $I$, and
$\calO_l(I)=\{x\in D\mid xI\subseteq I\}$ the associated left order of
$I$. We have $\calO_l(I)=II^{-1}$ \cite[Section~I.4]{vigneras}. Thus
\begin{equation}
  \label{eq:17}
[\gra I] =[I]\quad \text{if and only if}\quad [\gra \calO_l(I)]
=[\calO_l(I)],   
\end{equation}
where $[\calO_l(I)]$ is the trivial ideal class of
$\calO_l(I)$. Therefore, the study of $\Stab([I])$ often reduces to that of
$\Stab([\calO])$. We have 
\begin{equation}
  \label{eq:31}
\Stab([\calO])=\{ [\gra]\in \Pic(A)\mid \exists \lambda\in D \text{ such that }
\gra\calO=\lambda\calO\}.  
\end{equation}
Since $\gra \calO$ is a nonzero 
two-sided $\calO$-ideal,   $\lambda$ lies in  the normalizer
$\calN(\calO)\subseteq D^\times$ by
\cite[Exercise~I.4.6]{vigneras}. To study more closely the relationship between $\gra$ and $\lambda$, we
make use of the following lemma from commutative algebra. 

\begin{lem}\label{lem:eq-unit-ideal}
Let $R\subseteq S$ be an extension of unital rings, with $R$
commutative and $S$ a finite $R$-module. 
\begin{enumerate}[(i)]
\item If $\grc\subseteq R$ is an $R$-ideal with $\grc S=S$,
  then $\grc =R$.
\item Let $L$ be the totally
quotient ring of $R$. 
 Suppose  that the natural map $S\to S\otimes_R L$ is injective, 
and $S\cap L=R$ in $S\otimes_R L$. If $\grc\subset L$ is an 
$R$-submodule with $\grc S=\lambda S$ for some $\lambda
  \in L^\times$, then $\grc = \lambda R$. 
\end{enumerate}
\end{lem}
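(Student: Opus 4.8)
The plan is to prove both parts by reduction to the complete local case. For part (i), I would first recall that $S$ is a finitely generated $R$-module, so I can pick generators $s_1, \dots, s_n$ of $S$ over $R$. The hypothesis $\grc S = S$ means each $s_i = \sum_j c_{ij} s_j$ with $c_{ij} \in \grc$. This is precisely the setup for the determinant-trick (Cayley--Hamilton / Nakayama) argument: writing $M = (c_{ij})$, the matrix $\Id - M$ annihilates the vector $(s_1,\dots,s_n)$, so $\det(\Id - M)$ annihilates all of $S$, in particular it annihilates $1 \in S$. Expanding $\det(\Id - M)$, I get $1 = d$ for some element $d \in \grc$ (all the off-diagonal and higher-order terms lie in $\grc$ since the entries of $M$ do, and the constant term is $1$). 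Hence $1 \in \grc$, giving $\grc = R$. This is the standard Nakayama-type lemma and should be routine.

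For part (ii), the idea is to reduce to part (i). I start from $\grc S = \lambda S$ with $\lambda \in L^\times$. Multiplying by $\lambda^{-1}$, the $R$-submodule $\grc' := \lambda^{-1}\grc \subseteq L$ satisfies $\grc' S = S$. I would like to conclude $\grc' = R$, which would give $\grc = \lambda R$ as desired. The obstacle is that part (i) is stated for an \emph{ideal} $\grc \subseteq R$, whereas now $\grc'$ is only an $R$-submodule of the total quotient ring $L$, so I cannot apply (i) directly. The fix is to show $\grc' \subseteq R$ first, and then that $\grc' = R$. To see $\grc' \subseteq R$: for any $c' \in \grc'$, we have $c' S \subseteq \grc' S = S$, so in particular $c' \cdot 1 = c' \in S$; but $c' \in L$, so $c' \in S \cap L = R$ by the second hypothesis. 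Thus $\grc' \subseteq R$ is an honest $R$-ideal satisfying $\grc' S = S$, and part (i) applies to give $\grc' = R$.

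The one point requiring care is the injectivity hypothesis $S \hookrightarrow S \otimes_R L$ and how the intersection $S \cap L = R$ is taken inside $S \otimes_R L$; I need to make sure the identifications are compatible, i.e. that I am viewing $R \subseteq S$, $R \subseteq L$, and $L \hookrightarrow S \otimes_R L$ coherently so that the symbol $S \cap L$ makes sense. Concretely, $L = S \otimes_R L$ restricted to the image of $L$, and the relation $c' \cdot 1 \in S$ together with $c' \in L$ forces $c' \in S \cap L = R$. Once this bookkeeping is set up correctly, the argument closes. I expect the genuine content to be entirely in part (i) (the determinant trick), with part (ii) being a short formal reduction; the main thing to watch is not to conflate $R$-submodules of $L$ with ideals of $R$ until the containment $\grc' \subseteq R$ has been established.
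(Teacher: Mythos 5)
Your proof is correct and follows essentially the same route as the paper: part (i) is the determinant-trick form of Nakayama's lemma (which the paper simply cites from Eisenbud, Corollary~4.7, rather than spelling out), and part (ii) is the identical reduction, setting $\grc' = \lambda^{-1}\grc$, observing $\grc' = \grc'\cdot 1 \subseteq \grc' S = S$ so that $\grc' \subseteq S \cap L = R$, and then invoking part (i). No gaps.
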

\begin{proof}(i) 
Since $S$ is a finite $R$-module,  the equality $\grc S=S$
  implies  that there exists $a\in \grc$  such that $(1-a)S=0$
 by \cite[Corollary~4.7]{Eisenbud-Com-alg}. Necessarily $a=1$ since $1\in S$, 
 and hence $1\in \grc$ and $\grc=R$. \\
(ii) Let $\grc'=\frac{1}{\lambda} \grc$. Then $\grc'S=S$, which implies
that $\grc' \subset S$. We have $\grc' \subseteq S\cap L=R$, and hence
$\grc'$ is an integral ideal of $R$.  Now it follows from part~(i)
that $\grc'=R$,  equivalently, $\grc= \lambda R$. 
\end{proof}

We return to the study of the $\Pic(A)$-action on $\Cl(\calO)$. 
\begin{cor}\label{endo.1} 
  Suppose that $\gra\subset F$ is a locally principal nonzero fractional
  $A$-ideal with $\gra \calO=\lambda \calO$ for some $\lambda \in
  D$.  
\begin{enumerate}[(i)]
\item If $\lambda\in F$, then $\gra=\lambda  A$. 
\item If $\lambda\not \in F$, then  $\gra  B=\lambda  B$ with
  $B=F[\lambda]\cap \calO$.  In particular, $[\gra]$ belongs to the
  kernel of the canonical map $\Pic(A)\to \Pic(B)$. 
\end{enumerate}
\end{cor}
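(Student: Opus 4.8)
The goal is to prove Corollary~\ref{endo.1}, which describes the stabilizer relation $\gra\calO=\lambda\calO$ in the two cases $\lambda\in F$ and $\lambda\notin F$. Both parts should follow by specializing Lemma~\ref{lem:eq-unit-ideal}(ii) to the appropriate commutative subring $B\subseteq\calO$, so the plan is to identify the right $(R,S)$ in each case and verify the hypotheses of that lemma.

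\textbf{Part (i).} The plan is to apply Lemma~\ref{lem:eq-unit-ideal}(ii) with $R=A$, $S=\calO$, and $L=F$ (the total quotient ring of $A$, since $A$ is a $\Z$-order in the field $F$). The hypotheses to check are that $\calO\hookrightarrow\calO\otimes_A F=D$ is injective (immediate, as $\calO$ is a full lattice in $D$) and that $\calO\cap F=A$ inside $D$; but this last equality is exactly the defining property of a \emph{proper} $A$-order, which is part of our standing assumption. Since $\gra\subseteq F=L$ is an $A$-submodule and $\lambda\in F^\times=L^\times$ with $\gra\calO=\lambda\calO$, the lemma gives $\gra=\lambda A$ directly.

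\textbf{Part (ii).} Here $\lambda\notin F$, so I would set $B:=F[\lambda]\cap\calO$ and let $K:=F[\lambda]$, a quadratic field extension of $F$ inside $D$ (it is a field since $D$ is a division algebra, or at least $K$ is a commutative subfield because $\lambda$ generates a commutative subalgebra and $\lambda\notin F$). Then $B$ is a $\Z$-order in $K$, and one checks $B\cap F=A$ so that $B$ is a proper $A$-order in $K$ and $A\subseteq B$. The idea is to apply Lemma~\ref{lem:eq-unit-ideal}(ii) with $R=B$, $S=\calO$, and $L=K$: the inclusion $\calO\hookrightarrow\calO\otimes_B K$ is injective, and the crucial equality $\calO\cap K=B$ holds by the very definition $B=K\cap\calO$. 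The subtle point is that $\gra\subseteq F$ is only an $A$-module, not obviously a $B$-module; to invoke the lemma I first extend scalars and consider $\gra B\subseteq K=L$, which \emph{is} a $B$-submodule. From $\gra\calO=\lambda\calO$ I multiply on the left by $B$ to get $\gra B\cdot\calO=\gra\calO=\lambda\calO$, so with $\grc:=\gra B$ the hypothesis $\grc\calO=\lambda\calO$ of the lemma is met (noting $\lambda\in K^\times=L^\times$), yielding $\gra B=\lambda B$. The final assertion that $[\gra]$ lies in the kernel of $\Pic(A)\to\Pic(B)$ is then immediate, since $\gra B=\lambda B$ exhibits the extended ideal $\gra B$ as principal in $B$.

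\textbf{Main obstacle.} The routine verifications (injectivity of the tensor maps, that $B$ is genuinely a proper $A$-order with $B\cap F=A$) are straightforward. The step requiring the most care is confirming in Part~(ii) that $K=F[\lambda]$ really is a field and that $B=F[\lambda]\cap\calO$ satisfies the intersection condition $\calO\cap K=B$ needed by Lemma~\ref{lem:eq-unit-ideal}(ii); this is automatic from the definition of $B$, but one must be sure $K$ is chosen as $F[\lambda]$ rather than a larger ambient ring so that $S\otimes_R L$ behaves correctly. I expect the identification $\grc=\gra B$ and the bookkeeping that $\gra\calO=\gra B\calO$ (using $B\subseteq\calO$) to be the only genuinely content-bearing manipulation, and it is short.
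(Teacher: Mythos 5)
Your proposal is correct and follows exactly the paper's own proof: part (i) is Lemma~\ref{lem:eq-unit-ideal}(ii) with $R=A$, $S=\calO$, $\grc=\gra$, and part (ii) is the same lemma with $R=B$, $S=\calO$, $\grc=\gra B$, which is precisely the paper's choice. The "subtle point" you flag—that $\gra$ is only an $A$-module, so one must pass to $\grc=\gra B$ and use $B\calO=\calO$—is the same bookkeeping implicit in the paper's one-line proof, and your verification of the hypotheses ($\calO\cap F=A$ from properness, $\calO\cap F[\lambda]=B$ by definition) is the intended routine check.
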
  

\begin{proof}
 Part~(i) follows directly from Lemma~\ref{lem:eq-unit-ideal}(ii) with
$R=A$, $S=\calO$ and $\grc=\gra$, and part~(ii)
follows with $R=B$, $S=\calO$, and $\grc=\gra B$. 
\end{proof}

We say a locally principal fractional ideal $\gra$ of $A$ \emph{capitulates} in $B$ if the extended ideal $\gra B$ is principal. The capitulation problems (for abelian extensions of number fields $K/F$ with $A=O_F$ and $B=O_K$) was studied by Hilbert (see Hilbert Theorem~94), and it continues to be a field of active research up to this day \cite{Iwasawa-capitulation-1989, Bond-2017, suzuki-2001}.  
We will follow up on this line of investigation in
Section~\ref{sec:orbit-number-formula}, particularly in the derivation
of the orbit number formula (see Theorem~\ref{thm:orbit-num-formula}). However, our result does not explicitly depend on the works just cited.

\begin{lemma}\label{endo.2} 
For any ideal class $[I]\in \Cl(\calO)$, the stabilizer $\Stab([I])$ is contained in the kernel of the
homomorphism
\begin{equation}
  \label{eq:12}
\Sq:\Pic(A)\to \Pic(\wt A), \qquad [\gra]\mapsto [\,\gra\wt A\,]^2.   
\end{equation}
\end{lemma}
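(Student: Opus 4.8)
The plan is to reduce the quaternionic statement to a commutative one by applying the reduced norm, the key point being that the canonical involution fixes $F$ pointwise, so $\Nr(a)=a\bar a=a^2$ for every $a\in F$. This square is exactly the source of the square appearing in the definition \eqref{eq:12} of $\Sq$, and the target $\Pic(\wt A)$ rather than $\Pic(A)$ is forced because reduced norms of ideal elements naturally land in $\wt A=\Nr_A(\calO)$.

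Let $[\gra]\in\Stab([I])$. By the definition of the right ideal class set, $[\gra I]=[I]$ means $\gra I=\mu I$ for some $\mu\in D^\times$, an equality of locally principal right $\calO$-ideals ($\gra I$ and $\mu I$ are both locally principal right $\calO$-ideals since $I$ is). For such an ideal $J$, write $\Nr_A(J)$ for the $A$-submodule of $F$ generated by $\{\Nr(x):x\in J\}$; because $xo\in J$ and $\Nr(xo)=\Nr(x)\Nr(o)$ with $\Nr(o)\in\wt A$ for $o\in\calO$, this module is closed under multiplication by the $A$-generators of $\wt A$ and hence is a fractional $\wt A$-ideal. I would apply $\Nr_A(-)$ to both sides of $\gra I=\mu I$.

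I would verify the resulting identity one finite prime $\ell$ at a time, which sidesteps any failure of global multiplicativity of the reduced norm for non-maximal orders. Fix $\ell$ and use local principality to write $\gra_\ell=\alpha A_\ell$ with $\alpha\in F_\ell^\times$ and $I_\ell=\beta\calO_\ell$ with $\beta\in D_\ell^\times$. Then $\Nr_A(\gra I)_\ell=\Nr(\alpha\beta)\wt A_\ell=\alpha^2\Nr(\beta)\wt A_\ell$, using $\Nr(\alpha)=\alpha^2$, whereas $\Nr_A(\mu I)_\ell=\Nr(\mu\beta)\wt A_\ell=\Nr(\mu)\Nr(\beta)\wt A_\ell$. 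Cancelling the invertible factor $\Nr(\beta)\wt A_\ell$ yields $(\gra^2\wt A)_\ell=(\Nr(\mu)\wt A)_\ell$ for every $\ell$, hence $\gra^2\wt A=\Nr(\mu)\wt A$ globally. Since $\Nr(\mu)\in F^\times$, the right-hand side is principal, and $\gra^2\wt A=(\gra\wt A)^2$, so $[\gra\wt A]^2=[\wt A]$ in $\Pic(\wt A)$; that is, $\Sq([\gra])=1$ and $[\gra]\in\ker\Sq$.

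The only genuinely delicate point is the bookkeeping for $\Nr_A(-)$: one must check that it is compatible with localization and that $\Nr_A(J)$ is a locally principal, hence invertible, fractional $\wt A$-ideal, so that the cancellation step is legitimate. An equivalent route, better aligned with \eqref{eq:17} and \eqref{eq:31}, would first replace $I$ by its left order $\calO_l(I)$ — which is again a proper $A$-order with the same norm $\wt A$, being locally conjugate to $\calO$ — and run the same norm computation starting from $\gra\calO_l(I)=\lambda\calO_l(I)$; I expect the two approaches to be of equal difficulty and to share the same technical heart in the reduced-norm computation.
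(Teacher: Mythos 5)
Your proof is correct and takes essentially the same approach as the paper's: both reduce the quaternionic statement to a commutative one by applying the reduced norm $\Nr_A$ to the stabilizer equation, using compatibility of $\Nr_A$ with localization and local principality to arrive at $\gra^2\wt A=\Nr(\mu)\wt A$, which is exactly the paper's equation \eqref{eq:square-norm-lambda}. The only cosmetic difference is that the paper first passes to the left order via \eqref{eq:17} and invokes $\Nr_A(\calO_l(I))=\wt A$, whereas you apply the norm to $\gra I=\mu I$ directly so that the factor $\Nr(\beta)\wt A_\ell$ cancels --- a variant you yourself identify as equivalent.
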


\begin{proof}
Clearly, $\Nr_A$ commutes with any
localization of $A$. Thus $\Nr_A(\gra
  \calO')=\gra^2\Nr_A(\calO')$ for every proper $A$-order $\calO'$ and \textit{locally principal} fractional
  $A$-ideal $\gra$.    Since $I$ is
 locally principal,  $\calO_l(I)$ is a proper $A$-order with
  $\Nr_A(\calO_l(I))=\wt A$ by
  \cite[Section~2.3]{xue-yang-yu:ECNF}.  Suppose that $ \gra
\calO_l(I) =\lambda \calO_l(I)$ for some $\lambda \in D^\times$. Taking $\Nr_A$ on both sides, 
we get 
\begin{equation}\label{eq:square-norm-lambda}
\gra^2 \wt A= \Nr(\lambda) \wt A,
\end{equation}
and hence $[\gra]\in \ker(\Sq)$. 
\end{proof}

\begin{cor}\label{endo.3}
Let $\Pic(\wt A)^2$ be the subgroup of $\Pic(\wt A)$ consisting of 
the $\wt A$-ideal classes that are 
perfect squares.  Then the number $h(\calO)/|\Pic(\wt A)^2|$ 
is an integer.
\end{cor}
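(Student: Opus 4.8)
The plan is to combine Lemma~\ref{endo.2} with the orbit--stabilizer theorem for the $\Pic(A)$-action (\ref{eq:8}) on the finite set $\Cl(\calO)$. First I would pin down the image of the homomorphism $\Sq$ of (\ref{eq:12}). Since $\Sq$ factors as the composite of the surjection $\pi\colon\Pic(A)\twoheadrightarrow\Pic(\wt A)$ from (\ref{eq:9}) with the squaring endomorphism of $\Pic(\wt A)$, its image is precisely the subgroup of perfect squares, i.e.\ $\img(\Sq)=\Pic(\wt A)^2$. The surjectivity of $\pi$ is the one point here that needs to be invoked explicitly: it guarantees that every class of the form $[\b]^2$ with $[\b]\in\Pic(\wt A)$ is actually hit. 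By the first isomorphism theorem this yields
\[
[\Pic(A):\ker(\Sq)]=|\img(\Sq)|=|\Pic(\wt A)^2|.
\]

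Next I would analyze the orbits of the action. Since $\Pic(A)$ is a finite abelian group acting on the finite set $\Cl(\calO)$, every orbit $\Omega\subseteq\Cl(\calO)$ containing a class $[I]$ has cardinality $[\Pic(A):\Stab([I])]$. By Lemma~\ref{endo.2} the stabilizer satisfies $\Stab([I])\subseteq\ker(\Sq)$, so it is a subgroup of $\ker(\Sq)$ and the multiplicativity of indices gives
\[
|\Omega|=[\Pic(A):\Stab([I])]=[\Pic(A):\ker(\Sq)]\cdot[\ker(\Sq):\Stab([I])].
\]
In particular $|\Omega|$ is divisible by $[\Pic(A):\ker(\Sq)]=|\Pic(\wt A)^2|$ for \emph{every} orbit $\Omega$.

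Finally, since $\Cl(\calO)$ is the disjoint union of these orbits, the class number $h(\calO)=|\Cl(\calO)|=\sum_{\Omega}|\Omega|$ is a sum of nonnegative integers each divisible by $|\Pic(\wt A)^2|$, whence $|\Pic(\wt A)^2|$ divides $h(\calO)$, which is exactly the asserted integrality of $h(\calO)/|\Pic(\wt A)^2|$. I do not expect a genuine obstacle in this argument: its entire force comes from Lemma~\ref{endo.2}, which confines every stabilizer inside $\ker(\Sq)$ and thereby makes each orbit length a multiple of the fixed index $[\Pic(A):\ker(\Sq)]$. The only step demanding a moment's care is the identification $\img(\Sq)=\Pic(\wt A)^2$, and that reduces immediately to the surjectivity of $\pi$ recorded in (\ref{eq:9}).
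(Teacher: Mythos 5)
Your proposal is correct and follows essentially the same route as the paper: the paper's proof likewise observes that $\Sq$ factors as $\Pic(A)\twoheadrightarrow \Pic(\wt A)^2\hookrightarrow \Pic(\wt A)$ and then invokes Lemma~\ref{endo.2} to conclude that every $\Pic(A)$-orbit in $\Cl(\calO)$ has cardinality divisible by $|\Pic(\wt A)^2|$. Your write-up merely makes explicit the orbit--stabilizer and index-multiplicativity steps that the paper leaves implicit.
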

\begin{proof}
  The map $\Sq: \Pic(A)\to \Pic(\wt A)$ factors as
  $\Pic(A)\twoheadrightarrow \Pic(\wt A)^2\hookrightarrow \Pic(\wt
  A)$.
  By Lemma~\ref{endo.2}, each $\Pic(A)$-orbit of $\Cl(\calO)$ has
  cardinality divisible by $|\Pic(\wt A)^2|$. 
  The corollary follows.
\end{proof}


\begin{cor}\label{endo.4}
  Suppose that $\calO$ is closed under the canonical involution and
  $h(A)$ is odd.  Then the action of
  $\Pic(A)$ on $\Cl(\calO)$ is free. 
\end{cor}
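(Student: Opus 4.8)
The plan is to combine the two previous corollaries. The statement to prove is Corollary~\ref{endo.4}: if $\calO$ is closed under the canonical involution and $h(A)$ is odd, then the $\Pic(A)$-action on $\Cl(\calO)$ is free.

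First I would invoke the hypothesis that $\calO$ is closed under the canonical involution. By the discussion preceding Lemma~\ref{lem:eq-unit-ideal}, specifically the criterion from \cite[Lemma~3.1.1]{xue-yang-yu:ECNF} cited in the text, this is equivalent to $\wt A = A$, where $\wt A = \Nr_A(\calO)$. Hence the surjection $\pi\colon \Pic(A)\twoheadrightarrow \Pic(\wt A)$ of \eqref{eq:9} is just the identity on $\Pic(A)$, and the squaring homomorphism $\Sq\colon \Pic(A)\to \Pic(\wt A)$ of Lemma~\ref{endo.2} becomes the map $[\gra]\mapsto [\gra]^2$ from $\Pic(A)$ to itself.

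Next I would identify the kernel of this squaring map. Since $\Pic(A)$ is a finite abelian group and $h(A)=\abs{\Pic(A)}$ is odd by hypothesis, squaring is an automorphism of $\Pic(A)$: the map $x\mapsto x^2$ on a finite abelian group of odd order is injective (its kernel is the $2$-torsion subgroup, which is trivial when the order is odd). Therefore $\ker(\Sq)$ is trivial.

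Finally I would apply Lemma~\ref{endo.2}, which asserts that for every ideal class $[I]\in\Cl(\calO)$ the stabilizer $\Stab([I])$ is contained in $\ker(\Sq)$. Combining this with the triviality of $\ker(\Sq)$ established above gives $\Stab([I])=\{1\}$ for all $[I]$, which is precisely the statement that the $\Pic(A)$-action on $\Cl(\calO)$ is free. I do not anticipate any genuine obstacle here: the only point requiring a moment's care is the translation of ``closed under the canonical involution'' into $\wt A = A$ (so that the target of $\Sq$ coincides with $\Pic(A)$ and squaring is an endomorphism of a single group), after which the oddness of $h(A)$ does the rest. This corollary is really a short formal consequence of the machinery already set up in Lemma~\ref{endo.2}.
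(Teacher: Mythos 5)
Your proof is correct and follows essentially the same route as the paper: translate closure under the canonical involution into $\wt A = A$, observe that $\Sq$ is then squaring on $\Pic(A)$, whose kernel is trivial since $h(A)$ is odd, and conclude via Lemma~\ref{endo.2}. The paper's own proof is just a compressed version of exactly this argument, so there is nothing to add.
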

\begin{proof}
As remarked before, $\calO$ is closed under the canonical
involution if and only if $\wt A=A$. The two conditions imply that
$\ker (\Sq)$ is trivial, so the corollary follows from Lemma~\ref{endo.2}. 
\end{proof}

\begin{rem}
See \cite[Corollary (18.4), p.~134]{Conner-Hurrelbrink} for the complete list of
all quadratic fields $F=\Q(\sqrt{d})$ with odd class number.  
When $A\neq \wt A$, the action of $\Pic(A)$ on $\Cl(\calO)$ needs not
to be free, even if $h(A)$ is odd.  For example, let $A=\Z[\sqrt{37}]$ and $\calO=\bbO_8$ 
(see (\ref{eq:bbOr1}) and (\ref{eq:bbOr2}) for its definition).  We have $\wt A=O_F$,
$h(A)=3$ and $h(\calO)=7$ by \cite[(6.10)]{xue-yang-yu:ECNF}. Clearly, the
$\Pic(A)$-action on $\Cl(\calO)$ cannot be free. 
\end{rem}

In some cases, the Picard group action (\ref{eq:8}) can be utilized
to calculated type numbers of certain orders. Concrete examples will be worked
out in Section~\ref{sec:03}. To explain the ideas, we adopt the adelic
point of view. Let
$\wh \Z:=\varprojlim \zmod{n}=\prod_\ell \Z_\ell$ be the profinite
completion of $\Z$, where the product runs over all primes
$\ell\in \bbN$. Given any finite dimensional $\Q$-vector space or
finitely generated $\Z$-module $M$, we set
$\wh M:= M\otimes_\Z \wh\Z$, and $M_\ell:=M\otimes_\Z\Z_\ell$. Two
orders $\calO_1$ and $\calO_2$ in the quaternion $F$-algebra $D$ are said to be in the same \emph{genus}
if $\wh\calO_1\simeq \wh \calO_2$, or equivalently, if there exists
$x\in \wh D^\times$ such that $\wh\calO_1=x\wh\calO_2 x^{-1}$. They
are said to be of the same \emph{type} if $\calO_2=x\calO_1 x^{-1}$
for some $x\in D^\times$.  Let $\calT(\calO)$ denote the set of
$D^\times$-conjugacy classes of orders in the genus of $\calO$, and
let $t(\calO):=|\calT(\calO)|$ denote the \emph{type number} of
$\calO$. For example, it is well known that all maximal $O_F$-orders
of $D$ lie in the same genus.  If $\calO$ is a maximal $O_F$-order,
then the set $\calT(\calO)$ (resp.~its cardinality $t(\calO)$) depends only on $D$ and is denoted by
$\calT(D)$ (resp.~$t(D)$) instead.

Let $\calN(\wh\calO)\subseteq \wh D^\times$ be the normalizer of
$\wh\calO$, which admits a filtration $\calN(\wh \calO)\unrhd \wh
F^\times\wh \calO^\times\unrhd \wh\calO^\times$. 
 It is easy to see that $\Pic(A)\simeq \wh F^\times/F^\times \wh A^\times$,
 and 
\begin{align}
\Cl(\calO)&\simeq D^\times\backslash \wh D^\times /\wh
\calO^\times, \\
\overline  \Cl(\calO)&\simeq D^\times\backslash \wh D^\times /\wh F^\times\wh
\calO^\times, \label{eq:36}\\
 \calT(\calO)&\simeq D^\times\backslash \wh D^\times /\calN(\wh
\calO).  \label{eq:37}
\end{align}
Denote the normal subgroup $\calN(\calO)\cap (\wh F^\times\wh\calO^\times)$
of $\calN(\calO)$  by $\calN_0(\calO)$.


\begin{lem}\label{lem:normalizer-stablizer}
Keep the notation of (\ref{eq:31}). 
There is a canonical isomorphism 
\[\Stab([I])\simeq \calN_0(\calO_l(I))/F^\times\calO_l(I)^\times,
\qquad [\gra]\leftrightarrow (\lambda \bmod F^\times\calO_l(I)^\times). \]

\end{lem}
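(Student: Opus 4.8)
The plan is to build the isomorphism explicitly from the relation $\gra\calO_l(I)=\lambda\calO_l(I)$ and to check bijectivity prime-by-prime. Write $\calO':=\calO_l(I)$, which is again a proper $A$-order. By \eqref{eq:17}, the stabilizer $\Stab([I])$ of the $\Pic(A)$-action on $\Cl(\calO)$ equals the stabilizer of the trivial class $[\calO']$ for the action on $\Cl(\calO')$, so it suffices to analyse $\Stab([\calO'])$. Given $[\gra]\in\Stab([\calO'])$, formula \eqref{eq:31} supplies $\lambda\in D^\times$ with $\gra\calO'=\lambda\calO'$, and since $\gra\calO'$ is a nonzero two-sided $\calO'$-ideal, $\lambda\in\calN(\calO')$ by \cite[Exercise~I.4.6]{vigneras}. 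I would define the map by sending $[\gra]$ to the class of $\lambda$ in $\calN_0(\calO')/F^\times(\calO')^\times$.

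First I would verify that $\lambda$ genuinely lies in $\calN_0(\calO')=\calN(\calO')\cap(\wh F^\times(\wh{\calO'})^\times)$. As $\gra$ is locally principal, for each finite prime $\ell$ we may write $\gra_\ell=a_\ell A_\ell$ with $a_\ell\in F_\ell^\times$; localising $\gra\calO'=\lambda\calO'$ gives $a_\ell\calO'_\ell=\lambda\calO'_\ell$, so $a_\ell^{-1}\lambda\in(\calO'_\ell)^\times$. Hence $\lambda\in F_\ell^\times(\calO'_\ell)^\times$ at every $\ell$, i.e. $\lambda\in\wh F^\times(\wh{\calO'})^\times$, as needed.

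The crux of well-definedness and injectivity is the local identity $F_\ell^\times\cap(\calO'_\ell)^\times=A_\ell^\times$, which is the proper-order condition $\calO'\cap F=A$ read off at $\ell$ (if $x\in F_\ell^\times$ is a unit of $\calO'_\ell$, then $x,x^{-1}\in\calO'_\ell\cap F_\ell=A_\ell$); globally this reads $\wh F^\times\cap(\wh{\calO'})^\times=\wh A^\times$. Granting it, two choices $\lambda,\lambda'$ for the same $\gra$ satisfy $\lambda\calO'=\lambda'\calO'$, so $\lambda^{-1}\lambda'\in(\calO')^\times$, while replacing $\gra$ by $c\gra$ with $c\in F^\times$ replaces $\lambda$ by $c\lambda$; hence the class of $\lambda$ modulo $F^\times(\calO')^\times$ is independent of all choices. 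The assignment is a homomorphism: since $\gra\subseteq F$ is central, $(\gra\grb)\calO'=\mu\lambda\calO'$, and the target is abelian because the commutator $\lambda\mu\lambda^{-1}\mu^{-1}$ lies in $D^\times\cap(\wh{\calO'})^\times=(\calO')^\times$ (the central idelic factors of $\lambda$ and $\mu$ cancel), so $[\gra\grb]$ maps to $\mu\lambda\equiv\lambda\mu$, the product of the images. For injectivity, if $\lambda\in F^\times(\calO')^\times$ then $\gra\calO'=c\calO'$ for some $c\in F^\times$, whence $\gra=cA$ is principal by Corollary~\ref{endo.1}(i), so $[\gra]=1$.

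Finally, for surjectivity I would reverse the construction: given $\lambda\in\calN_0(\calO')$, decompose $\lambda=a_\ell u_\ell$ with $a_\ell\in F_\ell^\times$ and $u_\ell\in(\calO'_\ell)^\times$ at each $\ell$ (possible since $\lambda\in\wh F^\times(\wh{\calO'})^\times$), with $a_\ell\in A_\ell^\times$ for almost all $\ell$; the ideal $\gra$ with $\gra_\ell=a_\ell A_\ell$ is then a well-defined locally principal fractional $A$-ideal satisfying $\gra\calO'=\lambda\calO'$. I expect the main obstacle to be exactly the bookkeeping around the proper-order identity $F_\ell^\times\cap(\calO'_\ell)^\times=A_\ell^\times$: it is what pins $\gra$ down from $\lambda$ up to $F^\times\wh A^\times$ and upgrades the correspondence from a mere surjection to a bijection. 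Throughout one should also use that a $\Z$-order is recovered from its completions, in the form $D^\times\cap(\wh{\calO'})^\times=(\calO')^\times$, to move between the idelic and global pictures.
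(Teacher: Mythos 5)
Your proposal is correct, but it takes a genuinely different route from the paper. After the common reduction via (\ref{eq:17}) to the stabilizer of the principal class $[\calO']$, the paper does not construct the map at all: it applies the Zassenhaus (butterfly) lemma to the pairs $F^\times \unlhd D^\times$ and $\wh\calO^\times \unlhd \wh F^\times\wh\calO^\times$, producing a chain of abstract isomorphisms
\[
\frac{\calN_0(\calO)}{F^\times\calO^\times}\;\simeq\;\cdots\;\simeq\;\frac{D^\times\wh\calO^\times\cap \wh F^\times}{F^\times\wh A^\times},
\]
and then identifies the last quotient with $\Stab([\calO])$ through the adelic descriptions $\Pic(A)\simeq \wh F^\times/F^\times\wh A^\times$ and $\Cl(\calO)\simeq D^\times\backslash \wh D^\times/\wh\calO^\times$; the compatibility of this abstract isomorphism with the concrete assignment $[\gra]\leftrightarrow \lambda$ is explicitly left as a ``routine exercise.'' You instead build the map $[\gra]\mapsto (\lambda \bmod F^\times(\calO')^\times)$ by hand and verify well-definedness, the homomorphism property (via the commutator argument using centrality of the idelic $\wh F^\times$-factors and $D^\times\cap(\wh{\calO'})^\times=(\calO')^\times$), injectivity (via Corollary~\ref{endo.1}(i)), and surjectivity (via local decomposition $\lambda=a_\ell u_\ell$ and reassembly of $\gra$). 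Both arguments ultimately pivot on the same identity $\wh F^\times\cap(\wh{\calO'})^\times=\wh A^\times$, which you correctly isolate as the crux and which appears as the penultimate step of the paper's Zassenhaus computation. What each approach buys: the paper's is shorter and purely group-theoretic, but proves only the existence of an isomorphism and defers the explicit matching; yours is longer but actually establishes the concrete correspondence asserted in the statement of the lemma, in effect carrying out the exercise the paper omits, and your commutator argument also supplies the normality of $F^\times(\calO')^\times$ in $\calN_0(\calO')$, which the paper's quotient notation takes for granted.
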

\begin{proof}
In light of (\ref{eq:17}), it is enough to show that  $\Stab([\calO])\simeq
\calN_0(\calO)/F^\times\calO^\times$, where $[\calO]\in \Cl(\calO)$ is
the  principal right $\calO$-ideal class. 
Applying the Zassenhaus lemma \cite[Lemma~5.49]{Rotman-alg} to
$F^\times\unlhd D^\times$ and $\wh \calO^\times \unlhd \wh
F^\times\wh\calO^\times$, we get 
\[
\begin{split}
\frac{\calN_0(\calO)}{F^\times\calO^\times}&=\frac{(D^\times\cap \wh
F^\times\wh\calO^\times)F^\times}{(D^\times\cap \wh
\calO^\times)F^\times}\simeq\frac{(D^\times\cap \wh F^\times \wh
\calO^\times)\wh \calO^\times}{(F^\times\cap \wh F^\times\wh
\calO^\times)\wh \calO^\times}=\frac{(D^\times\wh\calO^\times\cap \wh
F^\times)\wh\calO^\times}{F^\times\wh\calO^\times}\\
  &\simeq\frac{D^\times\wh\calO^\times\cap \wh
F^\times}{(D^\times\wh\calO^\times\cap \wh
F^\times)\cap F^\times\wh\calO^\times}= \frac{D^\times\wh\calO^\times\cap \wh
F^\times}{F^\times(\wh F^\times\cap \wh \calO^\times)}=\frac{D^\times\wh\calO^\times\cap \wh
F^\times}{F^\times\wh A^\times}=\Stab([\calO]). 
\end{split}
\]
We leave it as a routine exercise to check that the adelic isomorphism
constructed above matches with the concrete one given in the lemma. 
\end{proof}





There is a natural surjective map 
\begin{equation}
  \label{eq:32}
  \Upsilon: \Cl(\calO)\twoheadrightarrow \calT(\calO), \qquad [I]\mapsto
  \dbr{\calO_l(I)},  
\end{equation}
where $\dbr{\calO_l(I)}$ denotes the $A$-isomorphism class of
$\calO_l(I)$ (equivalently, the $D^\times$-conjugacy class of
$\calO_l(I)$).  Clearly, $\Upsilon$ factors through the projection $\Cl(\calO)\to \overline \Cl(\calO)$. 

\begin{prop}\label{prop:class-type-number}
If $\calN(\wh \calO)=\wh F^\times\wh\calO^\times$, then $t(\calO)=r(\calO)$. 
If further $\calO$ is stable under the canonical involution and
$h(A)$ is odd, then
\begin{equation}
  \label{eq:39}
 h(\calO)=h(A)t(\calO).  
\end{equation}
Moreover, for any
order $\calO'$ in the genus of $\calO$, we have 
\begin{equation}
  \label{eq:40}
\abs{\Upsilon^{-1}(\dbr{\calO'})}=h(A),  \quad \text{and} \quad
\calN(\calO')=F^\times\calO'^\times. 
\end{equation}
\end{prop}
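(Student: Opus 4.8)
The plan is to derive all three assertions from the adelic dictionary \eqref{eq:36}--\eqref{eq:37}, from the freeness of the $\Pic(A)$-action supplied by Corollary~\ref{endo.4}, and from the stabilizer computation of Lemma~\ref{lem:normalizer-stablizer}. \emph{For the equality $t(\calO)=r(\calO)$:} the hypothesis $\calN(\wh\calO)=\wh F^\times\wh\calO^\times$ means that the two double-coset presentations $\overline{\Cl}(\calO)\simeq D^\times\backslash\wh D^\times/\wh F^\times\wh\calO^\times$ of \eqref{eq:36} and $\calT(\calO)\simeq D^\times\backslash\wh D^\times/\calN(\wh\calO)$ of \eqref{eq:37} are formed with respect to the \emph{same} subgroup, hence coincide. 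Since $\Upsilon$ factors through the projection $\Cl(\calO)\twoheadrightarrow\overline{\Cl}(\calO)$, the induced map $\bar\Upsilon\colon\overline{\Cl}(\calO)\to\calT(\calO)$ is precisely this identification of equal double-coset sets; in particular it is a bijection, so $r(\calO)=t(\calO)$.

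\emph{For the identity $h(\calO)=h(A)t(\calO)$:} adding the hypotheses that $\calO$ is stable under the canonical involution and that $h(A)$ is odd, Corollary~\ref{endo.4} shows that the $\Pic(A)$-action on $\Cl(\calO)$ is free. Thus every orbit has cardinality $\abs{\Pic(A)}=h(A)$, and counting $\Cl(\calO)$ orbit by orbit gives $h(\calO)=h(A)\,r(\calO)$; combined with the first part this equals $h(A)\,t(\calO)$.

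\emph{For the statements about $\calO'$ in the genus of $\calO$:} writing $\wh\calO'=x\wh\calO x^{-1}$ with $x\in\wh D^\times$ and using that $\wh F^\times$ is central, I first propagate the normalizer hypothesis across the genus, obtaining $\calN(\wh\calO')=x\calN(\wh\calO)x^{-1}=\wh F^\times\wh{\calO'}^\times$. Any global $\lambda\in\calN(\calO')\subseteq D^\times$ also normalizes $\wh\calO'$, so $\calN(\calO')\subseteq\calN(\wh\calO')=\wh F^\times\wh{\calO'}^\times$, whence $\calN_0(\calO')=\calN(\calO')\cap(\wh F^\times\wh{\calO'}^\times)=\calN(\calO')$. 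By surjectivity of $\Upsilon$ there is $[I]\in\Cl(\calO)$ with $\calO_l(I)$ conjugate to $\calO'$; freeness forces $\Stab([I])$ to be trivial, so Lemma~\ref{lem:normalizer-stablizer} gives $\calN_0(\calO_l(I))=F^\times\calO_l(I)^\times$, that is, $\calN(\calO_l(I))=F^\times\calO_l(I)^\times$, and conjugating by the element carrying $\calO_l(I)$ to $\calO'$ yields $\calN(\calO')=F^\times{\calO'}^\times$. Finally, since $\bar\Upsilon$ is a bijection, the fiber $\Upsilon^{-1}(\dbr{\calO'})$ is exactly one $\Pic(A)$-orbit of $\Cl(\calO)$, which by freeness has $h(A)$ elements, so $\abs{\Upsilon^{-1}(\dbr{\calO'})}=h(A)$.

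The double-coset bookkeeping and the orbit count are routine. The one point demanding care is the comparison between the \emph{global} normalizer $\calN(\calO')\subseteq D^\times$ and the group $\calN_0(\calO')$ that enters Lemma~\ref{lem:normalizer-stablizer}: the argument hinges on first transporting the hypothesis $\calN(\wh\calO)=\wh F^\times\wh\calO^\times$ to $\calO'$ in order to conclude $\calN_0(\calO')=\calN(\calO')$, and I expect this to be the main (if mild) obstacle.
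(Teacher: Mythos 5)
Your proof is correct and follows essentially the same route as the paper's: the double-coset identification $\overline{\Cl}(\calO)=\calT(\calO)$ from \eqref{eq:36}--\eqref{eq:37} for $t(\calO)=r(\calO)$, freeness of the $\Pic(A)$-action via Corollary~\ref{endo.4} for the counting statements, and Lemma~\ref{lem:normalizer-stablizer} together with the observation $\calN(\calO')=\calN_0(\calO')$ (propagated across the genus) for $\calN(\calO')=F^\times\calO'^\times$. Your write-up merely spells out in more detail the conjugation step $\calN(\wh\calO')=x\calN(\wh\calO)x^{-1}=\wh F^\times\wh{\calO'}^\times$ that the paper leaves implicit.
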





\begin{proof}
The assumption $\calN(\wh \calO)=\wh F^\times\wh\calO^\times$ allows the
canonical identification $\overline{\Cl}(\calO)=\calT(\calO)$,
and hence identifications of the orbits of the $\Pic(A)$-action with
the fibers of $\Upsilon$. In particular, we have $t(\calO)=r(\calO)$.    By
  definition, $\wh \calO'\simeq \wh \calO$ for any order $\calO'$ in
  the genus of $\calO$. Hence $\calN(\calO')=D^\times\cap \calN(\wh\calO')=\calN_0(\calO')$.

  Suppose further that $\calO$ is stable under the canonical
  involution and $h(A)$ is odd. By Corollary~\ref{endo.4}, $\Pic(A)$
  acts freely on $\Cl(\calO)$, so we have
  $t(\calO)=r(\calO)=h(\calO)/h(A)$, and 
$\abs{\Upsilon^{-1}(\dbr{\calO'})}=h(A)$ for each $\dbr{\calO'}\in \calT(\calO)$.
The equality $\calN(\calO')=F^\times\calO'^\times$ follows from 
 Lemma~\ref{lem:normalizer-stablizer}. 
\end{proof}


It is well known that the condition $\calN(\wh \calO)=\wh
F^\times\wh\calO^\times$ holds if $D$ is unramified at all the finite
places of $F$ and $\calO$ is a maximal $O_F$-order (see Corollary~\ref{2.7}). We provide a class of non-maximal
orders (namely, $\bbO_{16}$ defined by (\ref{eq:bbOr1}) and (\ref{eq:bbOr2})) that satisfies this condition in
Section~\ref{sec:03}.


\section{Picard group action for totally definite quaternion algebras}
\label{sec:orbit-number-formula}

Throughout this section, we assume that $F$ is a totally real field,
and $D$ is a totally definite quaternion $F$-algebra. In particular, for any $\lambda \in D$ with
$\lambda \not \in F$, the $F$-algebra $F[\lambda]$ is a CM-extension
of $F$.  Let $A$ be a $\Z$-order in $F$, and
$\calO$ be a proper $A$-order in $D$. The main goal of this section is
to derive a orbit number formula for the $\Pic(A)$ action on
$\Cl(\calO)$. This also leads to a type number formula for
$t(D)$ when $F$ has even degree over $\Q$ and $D$ is unramified at
all finite places of $F$.





\begin{notn}
Let $R_1\subseteq R_2$ be an extension of commutative Noetherian rings. The
canonical homomorphism between the Picard groups is denoted by 
\[i_{R_2/R_1}:\Pic(R_1) \to \Pic(R_2), \qquad   [\gra]\mapsto [\gra
R_2].\]
If $F_2/F_1$ is a finite extension of number fields, we write
$i_{F_2/F_1}$ for $i_{O_{F_2}/O_{F_1}}$. Because of 
Corollary~\ref{endo.1}, we are interested in the following set of
isomorphism classes of CM proper $A$-orders: 
\begin{equation}
  \label{eq:41}
\scrB:=\{\dbr{B}\mid B \text{ is a CM proper $A$-order, and }
\ker(i_{B/A})\neq \{1\}\},
\end{equation}
where $\dbr{B}$ denotes the $A$-isomorphism class of $B$. 
\end{notn}

\begin{prop}\label{prop:finiteness-CM-proper-A-order}
The  set $\scrB$ is finite.  
\end{prop}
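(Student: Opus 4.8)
The plan is to show that any $B\in\scrB$ admits a generator $\lambda$ of bounded ``size,'' so that $B$ is forced to lie between two lattices drawn from a finite list. First I would unwind the defining condition. Since $\Pic(A)$ is finite, fix once and for all a finite set of integral ideals representing its classes. Given $\dbr{B}\in\scrB$, the hypothesis $\ker(i_{B/A})\neq\{1\}$ furnishes a nontrivial class, represented by some $\gra$ from this finite set, with $\gra B=\lambda B$ for some $\lambda\in K:=\Frac(B)$. As $\gra\subseteq A\subseteq B$ is integral, $\lambda\in B$; moreover $\lambda\notin F$, for otherwise Lemma~\ref{lem:eq-unit-ideal}(ii) (with $R=A$, $S=B$, $L=F$, using $B\cap F=A$) would give $\gra=\lambda A$, contradicting nontriviality. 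Hence $K=F[\lambda]$ is a CM-extension of $F$ and $B$ is a proper $A$-order in $K$ with $A[\lambda]\subseteq B\subseteq O_K$.

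Next I would bound $\lambda$ by comparing module indices. Over each localization $A_\grp$ (a DVR) the module $B_\grp$ is torsion-free and finite, hence free of rank $2$, so $[B:\gra B]=[A:\gra]^2$, while on the other hand $[B:\lambda B]=\lvert N_{K/\Q}(\lambda)\rvert$. Therefore $\lvert N_{F/\Q}(\Nr(\lambda))\rvert=\lvert N_{K/\Q}(\lambda)\rvert=[A:\gra]^2$, a quantity bounded over the finite set of representatives. Since $K/F$ is CM, $\Nr(\lambda)=\lambda\bar\lambda$ is totally positive. Now replacing $\lambda$ by $u\lambda$ for a unit $u\in A^\times$ changes neither $B$, nor the relation $\gra B=\lambda B$, nor the lattice $A[\lambda]$, while it scales $\Nr(\lambda)$ by $u^2$. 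By Dirichlet's unit theorem the logarithmic images of $(A^\times)^2$ form a full (cocompact) lattice in the sum-zero hyperplane of $F\otimes_\Q\R$; as $\Nr(u\lambda)$ has fixed absolute norm $[A:\gra]^2$, its logarithmic image lies in a fixed affine translate of that hyperplane, so I can choose $u$ placing $\Nr(u\lambda)$ in a fixed bounded region. Since $\Nr(u\lambda)\in O_F$, a lattice, it then ranges over a finite set.

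With $\Nr(\lambda)$ confined to a finite set, total imaginarity of $K$ forces $\lvert\sigma(\Tr(\lambda))\rvert<2\sqrt{\sigma(\Nr(\lambda))}$ at every real place $\sigma$ of $F$, so $\Tr(\lambda)\in O_F$ is archimedeanly bounded and hence also ranges over a finite set. Consequently the minimal polynomial $x^2-\Tr(\lambda)x+\Nr(\lambda)$ takes finitely many values, $\lambda$ lies in a finite set, and so do $K=F[\lambda]$ and the lattice $A[\lambda]$. Finally, each $B$ satisfies $A[\lambda]\subseteq B\subseteq O_K$ with $[O_K:A[\lambda]]<\infty$, so there are only finitely many possibilities for $B$ between these two lattices; a fortiori only finitely many isomorphism classes $\dbr{B}$. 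This proves that $\scrB$ is finite.

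The main obstacle is the rescaling step: the relation $\gra B=\lambda B$ pins $\Nr(\lambda)$ down only as a totally positive generator of (the locally principal ideal) $\gra^2$, determined up to squares of units, and one must invoke the geometry of the unit lattice to replace it by a representative of bounded archimedean size before the trace, and thence the field $K$ and the order $B$, can be controlled. A secondary point needing care is establishing the index identity $\lvert N_{K/\Q}(\lambda)\rvert=[A:\gra]^2$ for the possibly non-maximal order $B$, which I would handle locally using the freeness of $B_\grp$ over the discrete valuation rings $A_\grp$.
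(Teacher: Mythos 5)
Your proof is correct, but it takes a genuinely different route from the paper's. The paper argues in two stages: first it shows that only finitely many CM fields $K=B\otimes_A F$ can occur, by a case analysis on whether $[\gra]$ capitulates already in $O_F$ --- if not, it invokes the Conner--Hurrelbrink classification of CM extensions $K/F$ with $\ker(i_{K/F})\neq\{1\}$; if so, it writes $\lambda=\gamma u$ with $u\in O_K^\times\setminus O_F^\times$ and invokes the finiteness of CM extensions with $[O_K^\times:O_F^\times]>1$. Second, for each fixed $K$ and each class in the finite group $\ker(i_{O_K/A})$, it traps the possible orders $B$ using Lemma~\ref{lem:char-CM-order-nontrivial-i}: each such $B$ contains a lattice $A+u\gra/\lambda'$ with $u$ ranging over the finite group $O_K^\times/A^\times$. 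You instead bound everything at once by geometry of numbers: from $\gra B=\lambda B$ you extract the exact norm $\lvert N_{K/\Q}(\lambda)\rvert=[A:\gra]^2$, rescale by units so that the cocompactness of $\log\bigl((A^\times)^2\bigr)$ in the sum-zero hyperplane puts the totally positive element $\Nr(\lambda)$ in a fixed bounded region, and then use total imaginarity to bound $\Tr(\lambda)$; this confines the minimal polynomial of $\lambda$, hence $K$ and $A[\lambda]$, to a finite set, and each $B$ is squeezed between $A[\lambda]$ and $O_K$. Your approach is more self-contained --- it re-derives, rather than cites, the finiteness statements from Conner--Hurrelbrink, and it treats the paper's two cases uniformly --- at the cost of redoing geometry-of-numbers work that the paper outsources to the literature.

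One step needs repair. To get $[B:\gra B]=[A:\gra]^2$ you assert that each localization $A_\grp$ is a DVR, so that $B_\grp$ is free of rank $2$ over it. In this paper $A$ is an arbitrary $\Z$-order in $F$ (e.g.\ $A=\Z[\sqrt{p}]$ with $p\equiv 1\pmod 4$), so at a singular prime $\grp$ (one containing the conductor) $A_\grp$ is a non-regular one-dimensional local ring, not a DVR, and finite torsion-free modules over it need not be free. The identity itself is still true, but the correct input is that $\gra$ is \emph{locally principal}, which is built into the definition of the action (\ref{eq:8}): for each rational prime $p$ the semilocal ring $A\otimes\Z_p$ has trivial Picard group, so $\gra\otimes\Z_p=a_p\,(A\otimes\Z_p)$ for some $a_p$, and computing the indices $[B\otimes\Z_p:a_p(B\otimes\Z_p)]$ and $[A\otimes\Z_p:a_p(A\otimes\Z_p)]$ as determinants of multiplication by $a_p$ on these free $\Z_p$-modules gives $N_{F_p/\Q_p}(a_p)^2$ and $N_{F_p/\Q_p}(a_p)$ respectively, where $F_p=F\otimes\Q_p$. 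Multiplying over all $p$ yields $[B:\gra B]=[A:\gra]^2$ with no freeness of $B$ over $A$ required. With this substitution your argument goes through in the generality the proposition demands.
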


\begin{proof}
Let $B$ be a CM proper $A$-order with $\dbr{B}\in \scrB$ and $K$ be its
fractional field.  Pick  a  nontrivial
ideal class $[\gra]\in \ker(i_{B/A})$ so that  
\begin{equation}
  \label{eq:11}
\gra B=\lambda B \quad \text{with}\quad \lambda \in K^\times. 
\end{equation}
  Necessarily
$\lambda \not\in F^\times$, otherwise $\gra=\lambda A$ by part~(ii) of
Lemma~\ref{lem:eq-unit-ideal}. 

There is a commutative diagram
\[
\begin{CD}
  \Pic(A) @>>> \Pic(O_F)\\
   @VVV  @VVV\\
  \Pic(B)@>>> \Pic(O_K)
\end{CD}
\]
If $[\gra]\not\in\ker(i_{O_F/A})$, then $[\gra O_F]$ is a nontrivial
ideal class in $\ker(i_{K/F})$. By
\cite[Section~14]{Conner-Hurrelbrink}, there are only finitely many
CM-extensions $K'/F$ such that $\ker(i_{K'/F})\neq \{1\}$. On the
other hand, suppose that $[\gra]\in \ker(i_{O_F/A})$, so that 
$\gra O_F=\gamma O_F$ for some $\gamma \in F^\times$.  We have
$\gamma O_K=\gra O_K=\lambda O_K$, and hence $\lambda=\gamma u$
for some $u\in O_K^\times$. Note that $u\not\in O_F^\times$ since
$\lambda \not\in F^\times$.  There
are only finitely many CM-extensions $K''/F$ such that
$[O_{K''}^\times:O_F^\times]>1$. Indeed, let $\bmu(K'')$ be the group
of roots of unity in $K''$. There are only finitely many CM-extensions $K''/F$ such that
$\bmu(K'')\neq \{\pm 1\}$. If $\bmu(K'')=\{\pm 1\}$, then
$[O_{K''}^\times: O_F^\times]>1$ if and only if $K''/F$ is a
CM-extension of type II (i.e.  $[O_{K''}^\times: O_F^\times\bmu(K'')]=2$), and there are finitely many of these by
\cite[Lemma~13.3]{Conner-Hurrelbrink}. 
We conclude that the following set of
CM-extensions of $F$ is finite:
\[\scrK:=\{B\otimes_A F\mid \dbr{B}\in \scrB \}.\]

Now fix $K\in \scrK$ and a nontrivial $A$-ideal class
$[\gra]\in \ker(i_{O_K/A})$. Pick $\lambda'\in K^\times$ such that
$\gra O_K=\lambda' O_K$. The CM proper $A$-orders $B\subseteq O_K$
such that $[\gra]\in \ker(i_{B/A})$ are characterized in
Lemma~\ref{lem:char-CM-order-nontrivial-i} below.  For each
$u\in O_K^\times$ with $u^{-1}\lambda'\not\in F^\times$, we have
$A\cap \frac{u\gra}{\lambda'}=\{0\}$, so $A+\frac{u\gra}{\lambda'}$
forms an $A$-lattice in $O_K$. There are only finitely many $A$-orders
in $O_K$ containing $A\oplus \frac{u\gra}{\lambda'}$. On the other hand,
there is one-to-one correspondence between $O_K^\times/A^\times$ and
the set of $A$-submodules of the form $u\gra/\lambda'$ in $O_K$. Since
$K/F$ is a CM-extension, $O_K^\times/A^\times$ is a finite
group. Therefore,  there are only finitely many CM proper $A$-order $B$ in
$K$ with $[\gra]\in \ker(i_{B/A})$.  Since $\ker(i_{O_K/A})$ is finite
as well, the proposition follows. 
\end{proof}




\begin{lem}\label{lem:char-CM-order-nontrivial-i}
  Let $K/F$ be a CM-extension with $\ker(i_{O_K/A})\neq \{1\}$, and
  $[\gra]$ be a nontrivial $A$-ideal class in $\ker(i_{O_K/A})$ so
  that $\gra O_K=\lambda' O_K$ for some $\lambda'\in K^\times$. Given
  a CM proper $A$-order $B\subseteq O_K$, we have
  $[\gra]\in \ker(i_{B/A})$ if and only if there exists
  $u\in O_K^\times$ such that\footnote{Up to multiplication by a unit in $O_K^\times$, 
    the $A$-submodule $\gra/\lambda'\subset O_K$ depends only on the $A$-ideal
    class $[\gra]\in \ker(i_{O_K/A})$ and not on the choice of $\gra$.}
  $u\gra/\lambda'\subseteq B$.  In addition, we always have  $u^{-1}\lambda'\not\in F^\times$ if such a unit $u\in O_K^\times$ exists. 
\end{lem}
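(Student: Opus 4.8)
The plan is to unwind both directions of the equivalence into concrete statements about extension of ideals and to reduce each to one part of Lemma~\ref{lem:eq-unit-ideal}. Recall that $[\gra]\in\ker(i_{B/A})$ means exactly that the extended ideal $\gra B$ is principal, say $\gra B=\mu B$ for some $\mu\in K^\times$. The organizing observation is that the relation $\gra O_K=\lambda' O_K$ rigidifies things after extension to $O_K$: any such generator $\mu$ must differ from $\lambda'$ by a unit of $O_K$, while conversely a candidate generator for $\gra B$ is detected by how $\gra/\lambda'$ sits inside $B$.

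For the forward implication I would argue as follows. Suppose $\gra B=\mu B$. Extending along $B\subseteq O_K$ gives $\mu O_K=\gra O_K=\lambda' O_K$, so $\mu=\lambda' v$ for some $v\in O_K^\times$. Then $\gra B=\lambda' v B$, and multiplying by $(\lambda' v)^{-1}$ yields $(v^{-1}\gra/\lambda')B=B$. Since $1\in B$, this forces $v^{-1}\gra/\lambda'\subseteq B$, so the unit $u:=v^{-1}\in O_K^\times$ does the job.

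For the reverse implication, assume $u\gra/\lambda'\subseteq B$ with $u\in O_K^\times$, and form the integral $B$-ideal $\grb:=(u\gra/\lambda')B\subseteq B$. Using $\gra O_K=\lambda' O_K$ and $u\in O_K^\times$, a direct computation gives $\grb O_K=(u\gra/\lambda')O_K=u O_K=O_K$. Now Lemma~\ref{lem:eq-unit-ideal}(i), applied with $R=B$ and $S=O_K$ (a finite $B$-module), gives $\grb=B$; hence $\gra B=\lambda' u^{-1}B$ is principal, i.e. $[\gra]\in\ker(i_{B/A})$. The final assertion is then immediate from the identity $\gra B=\lambda' u^{-1}B$ just obtained: if we had $u^{-1}\lambda'\in F^\times$, then $\gra B=\beta B$ with $\beta:=u^{-1}\lambda'\in F^\times$, and Lemma~\ref{lem:eq-unit-ideal}(ii), applied with $R=A$ and $S=B$ (so that $S\otimes_A F=K$ and $S\cap F=B\cap F=A$ because $B$ is a proper $A$-order), would yield $\gra=\beta A$, contradicting the hypothesis that $[\gra]$ is a nontrivial class. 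Therefore $u^{-1}\lambda'\notin F^\times$.

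I expect the only genuinely delicate point to be the bookkeeping of extensions and contractions of ideals among $A$, $B$, and $O_K$ — in particular verifying that $\grb O_K=O_K$ and that the two halves of Lemma~\ref{lem:eq-unit-ideal} apply to the stated triples $(R,S,\grc)$. One should also note, consistently with the footnote, that $\gra/\lambda'\subset O_K$ is well defined up to $O_K^\times$ independently of the chosen $\gra$ and $\lambda'$, but this is routine and is not needed for the equivalence itself, which is stated for a fixed choice.
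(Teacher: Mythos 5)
Your proof is correct and is essentially the same as the paper's: both directions hinge on Lemma~\ref{lem:eq-unit-ideal}, with part (i) applied to $R=B$, $S=O_K$ to show $\grb=(u\gra/\lambda')B=B$ in the converse direction, and part (ii) applied to $R=A$, $S=B$ to rule out $u^{-1}\lambda'\in F^\times$ via the nontriviality of $[\gra]$. The only cosmetic difference is that the paper handles the last assertion by citing the remark following (\ref{eq:11}), which is exactly your contradiction argument.
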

\begin{proof}
First, suppose that $[\gra]\in \ker(i_{B/A})$ so that (\ref{eq:11})
holds. We have 
$\lambda' O_K=\gra O_K=\lambda O_K$,  
 and hence
$\lambda'=u\lambda$ for some $u\in O_K^\times$. It follows that 
\[u\gra/\lambda'=\gra/\lambda\subset B.\]
As remarked right after (\ref{eq:11}),   $u^{-1}\lambda'=\lambda\not\in F^\times$ since $[\gra]$ is assumed to be nontrivial. 

   Conversely, suppose that there exists $u\in O_K^\times$ such that
  $u\gra/\lambda'\subset B$. Let $\grb=\frac{u\gra}{\lambda'}B$, the
  ideal of $B$ generated by $ u\gra/\lambda'$. Then
  $\grb O_K=\frac{u\gra}{\lambda'} O_K=O_K$.  It follows from
  Lemma~\ref{lem:eq-unit-ideal} that $\grb=B$, i.e.
  $\gra B=(u^{-1}\lambda') B$.
\end{proof}
When $A$ coincides with the maximal order $O_F$ in $F$, we give a more
precise characterization of $O_F$-orders $B$ in $K$ with $\ker(i_{B/O_F})\neq
\{1\}$ in Lemma~\ref{lem:suborder-nontrivial-kernel}.

Let $B$ be an arbitrary CM proper $A$-order with fractional field $K$, and $x
\mapsto \bar{x}$ be the unique
  nontrivial involution of $K/F$. We define a symbol 
\begin{equation}\label{eq:46}
  \delta(B):=
  \begin{cases}
    1\qquad & \text{if } B=\bar B,\\
    0\qquad & \text{otherwise.}
  \end{cases}
\end{equation}
Note that $\delta(B)=1$
  for all $O_F$-orders $B$. If $\calO$ is stable under the canonical involution of $D$, then so is
$\calO_l(I)$ for every locally principal right $\calO$-ideal
$I$, in which case  $\delta(F(\lambda)\cap
\calO_l(I))=1$ for every $\lambda\in D^\times$ with
$\lambda \not \in F^\times$. 


\begin{lem}\label{lem:kernel-2-torsion}
If $B$ is a CM proper
  $A$-order with $\delta(B)=1$, then $\abs{\ker (i_{B/A})}\leq 2$.
\end{lem}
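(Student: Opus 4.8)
The plan is to produce an injective homomorphism from $\ker(i_{B/A})$ into a quotient of the group of roots of unity $\bmu(B)$ that has order at most $2$. Fix a class $[\gra]\in\ker(i_{B/A})$ and write $\gra B=\lambda B$ with $\lambda\in K^\times$, where $K=B\otimes_A F$. Since $\gra\subset F$ is fixed by the nontrivial involution $x\mapsto\bar x$ of $K/F$, and $\bar B=B$ (this is precisely the hypothesis $\delta(B)=1$), applying the involution to $\gra B=\lambda B$ yields $\gra B=\bar\lambda B$, whence $\lambda B=\bar\lambda B$ and $u_\gra:=\lambda/\bar\lambda\in B^\times$. One has $\Nm_{K/F}(u_\gra)=u_\gra\overline{u_\gra}=1$, so $u_\gra$ lies in the norm-one group $B^1:=\{u\in B^\times:\Nm_{K/F}(u)=1\}$. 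The ``asymmetry'' map $\psi\colon B^\times\to B^1$, $w\mapsto w/\bar w$, indeed lands in $B^1$, and I would define
\[\Phi\colon \ker(i_{B/A})\to B^1/\psi(B^\times),\qquad [\gra]\mapsto u_\gra\bmod\psi(B^\times).\]

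Next I would verify that $\Phi$ is a well-defined homomorphism and is injective. Replacing $\lambda$ by $\lambda w$ with $w\in B^\times$ multiplies $u_\gra$ by $\psi(w)$, and replacing $\gra$ by $c\gra$ with $c\in F^\times$ leaves $u_\gra$ unchanged because $\bar c=c$; multiplicativity of $\lambda\mapsto u_\gra$ then gives the homomorphism property. For injectivity, suppose $\Phi([\gra])=1$, so $\lambda/\bar\lambda=w/\bar w$ for some $w\in B^\times$. Then $\lambda/w$ is fixed by the involution, hence lies in $F^\times$, and $\gra B=\lambda B=(\lambda/w)B$ with $\lambda/w\in F^\times$. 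Applying Lemma~\ref{lem:eq-unit-ideal}(ii) with $R=A$, $S=B$ and total quotient ring $L=F$ (the hypotheses $B\cap F=A$ and the injectivity of $B\to B\otimes_A F=K$ hold because $B$ is a proper $A$-order), I conclude $\gra=(\lambda/w)A$ is principal, i.e.\ $[\gra]=1$. Thus $\Phi$ is injective and it remains to bound $\abs{B^1/\psi(B^\times)}$.

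The decisive step is the identification $B^1=\bmu(B)$. For $u\in B^1$ and any embedding $\sigma\colon K\hookrightarrow\C$, the CM property gives $\sigma(\bar u)=\overline{\sigma(u)}$, so $u\bar u=1$ forces $\abs{\sigma(u)}=1$ for every $\sigma$; since $u$ is an algebraic integer, Kronecker's theorem makes $u$ a root of unity, while the reverse inclusion $\bmu(B)\subseteq B^1$ is immediate from $\bar\zeta=\zeta^{-1}$. On the finite cyclic group $\bmu(B)$ the map $\psi$ acts by $\zeta\mapsto\zeta/\bar\zeta=\zeta^2$, so $\psi(B^\times)\supseteq\psi(\bmu(B))=\bmu(B)^2$; as $-1\in\bmu(B)$, the order of $\bmu(B)$ is even and $[\bmu(B):\bmu(B)^2]=2$. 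Hence $B^1/\psi(B^\times)=\bmu(B)/\psi(B^\times)$ is a quotient of $\bmu(B)/\bmu(B)^2$, of order at most $2$, giving $\abs{\ker(i_{B/A})}\le 2$. Conceptually, $B^1/\psi(B^\times)$ is the $(-1)$st Tate cohomology group of $\Gal(K/F)$ acting on $B^\times$. The step I expect to be the main obstacle is the reduction to norm-one units together with the clean proof that $B^1=\bmu(B)$: once this archimedean input is secured, the index-$2$ bound and the injectivity via Lemma~\ref{lem:eq-unit-ideal} are routine, and it is essential that $\delta(B)=1$ is used both to pass from $\gra B=\lambda B$ to $\lambda B=\bar\lambda B$ and to guarantee $\bar u\in B^\times$.
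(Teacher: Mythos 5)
Your proposal is correct and is essentially the paper's own proof: the paper disposes of this lemma by citing the proof of Theorem~10.3 in Washington's \emph{Introduction to Cyclotomic Fields} and asserting it applies \emph{mutatis mutandis} to proper $A$-orders, and your argument --- the map $[\gra]\mapsto \lambda/\bar\lambda$ into norm-one units, Kronecker's theorem identifying $B^1$ with $\bmu(B)$, the index-$2$ bound from $\psi(\zeta)=\zeta^2$, and injectivity via Lemma~\ref{lem:eq-unit-ideal}(ii) using the properness condition $B\cap F=A$ --- is precisely that adaptation written out in full. The only difference is expository: you supply the details (including where $\delta(B)=1$ and properness enter) that the paper leaves to the citation.
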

\begin{proof}
  This result is well known when $A=O_F$ and $B=O_K$. In fact, the proof of 
\cite[Theorem~10.3]{Washington-cyclotomic} applies,  \textit{mutatis
  mutandis},  to the current setting as well and shows that  $\abs{\ker
  (i_{B/A})}\leq 2$. 
\end{proof}



Lemma~\ref{lem:kernel-2-torsion} does not
  hold in general when $\delta(B)=0$. We exhibit in Example~\ref{ex:ker-cyclic-order-3} an infinite family of pairs $(A, B)$
  such that $\ker(i_{B/A})\simeq \zmod{3}$.

 Denote by $\Emb(B,\calO)$ the set of optimal embeddings from $B$ into $\calO$:
\[\Emb(B, \calO)=\{\varphi\in \Hom_F(K, D)\mid \varphi(K)\cap
\calO=\varphi(B)\}.  \]
Fix a \emph{nontrivial} $A$-ideal class $[\gra]\in \Pic(A)$. We define a few
subsets of $\scrB$:
\begin{align}
  \scrB(\calO)&=\{\dbr{B}\in \scrB\mid \Emb(B, \calO)\neq
\emptyset\},\\
  \scrB_{[\gra]}&=\{\dbr{B}\in \scrB\mid [\gra]\in \ker(i_{B/A})\}, \label{eq:13}\\
\scrB_{[\gra]}(\calO)&=\{\dbr{B}\in \scrB\mid [\gra]\in
                   \ker(i_{B/A}),\text{ and }\Emb(B, \calO)\neq
\emptyset\}.\label{eq:19}
\end{align}
Corollary~\ref{endo.1} implies that for any ideal class $[I]\in \Cl(\calO)$, 
\begin{equation}
  \label{eq:stabilizer}
  \Stab([I])=\bigcup_{\dbr{B}\in \scrB(\calO_l(I))} \ker(i_{B/A}:\Pic(A) \to \Pic(B)). 
\end{equation}

Suppose that $\scrB_{[\gra]}(\calO_l(I))\neq \emptyset $ for some ideal class $[I]\in \Cl(\calO)$ so that $[\gra]\in \Stab([I])$. It then follows from \eqref{eq:square-norm-lambda} that $\gra^2\wt A=\Nr(\lambda)\wt A$ for some $\lambda \in D^\times$, where $\wt A=\Nr_A(\calO)$ is the suborder of $O_F$ spanned over $A$ by the reduced norms of elements of $\calO$.   
Note that $\Nr(\lambda)$ is totally positive since $D$ is totally definite over $F$. For any $\Z$-order $A'$ in $F$, Let $\Pic^+(A')$ be the quotient of the multiplicative group of locally principal fractional $A'$-ideals by the subgroup of principal fractional $A'$-ideals that are generated by \emph{totally positive} elements. If $A'=O_F$, then $\Pic^+(A')$ is simply the narrow class group of $F$.  We denote the subgroup of 2-torsions of $\Pic^+(A')$ by $\Pic^+(A')[2]$, and the canonical map $\Pic^+(A')\to \Pic(A')$ by $\theta_{A'}$. For any order $A'$ intermediate to $A\subseteq O_F$,  let us define
\begin{equation}\label{eq:def-grC2}
\grC_2(A, A'):=\{[\gra]\in \Pic(A)\mid i_{A'/A}([\gra])\in \theta_{A'}(\Pic^+(A')[2])\}.
\end{equation}
If $A=O_F$, then necessarily $A'=O_F$ as well, and we simply write $\grC_2(F)$ for $\grC_2(O_F, O_F)$, which coincides with the image of $\Pic^+(O_F)[2]$ in $\Pic(O_F)$.
Clearly, $\grC_2(A, \wt A)$ is a subgroup of $\ker(\Sq)$, where $\Sq: \Pic(A)\to \Pic(\wt A)$ is the homomorphism defined in \eqref{eq:12}.  By our construction, 
\begin{equation}\label{eq:tot-def-not-stab}
[\gra]\not\in  \grC_2(A, \wt A)\quad \text{implies that}\quad  [\gra] \not\in \Stab([I]), \ \forall [I]\in \Cl(\calO). 
\end{equation}

Using the norm map $\Nm_{K/F}$ for CM-extensions $K/F$, one shows similarly that $[\gra]\in
\grC_2(A, O_F)$ if 
$\scrB_{[\gra]}\neq\emptyset$. 
We claim that this is in fact an ``if and only if" condition when $A=O_F$. If $\gra$ is a
non-principal fractional 
$O_F$-ideal such that  $\gra^2=\tau O_F$ for some totally positive $\tau\in F^\times$, then  $K=F(\sqrt{-\tau})$ is a CM-extension of $F$ with $[\gra]\in
\ker(i_{K/F})$ (cf.~\cite[Theorem~14.2]{Conner-Hurrelbrink}). Thus 
$\scrB_{[\gra]}\neq \emptyset $ if and only if $[\gra]\in \grC_2(F)$.
Combining this with  Lemma~\ref{lem:kernel-2-torsion}, we obtain that 
\begin{equation}
  \label{eq:30}
  \scrB=\coprod_{1\neq [\gra]\in \grC_2(F)}\scrB_{[\gra]}\qquad\text{if }
 A=O_F.
\end{equation}
Therefore, $\scrB=\emptyset$ if and only if $\grC_2(F)$ is
trivial.





\begin{cor}\label{2.7}
  Let $F$ be a totally real number field of even degree over $\Q$, and $\calO$
  be a maximal $O_F$-order in the (unique up to isomorphism)   totally definite 
  quaternion $F$-algebra $D$ 
  unramified at all finite places of $F$.  
  The following are equivalent:

(1) the group $\grC_2(F)$ is trivial; 


(2) the action of $\Pic(O_F)$ on $\Cl(\calO)$ is free; 

(3) $t(D)=h(D)/h(F)$. 

\noindent Moreover, if $h(F)$ is odd, then the statements (1)--(3) hold. 
\end{cor}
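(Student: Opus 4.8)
The plan is to prove the chain of equivalences $(1)\Leftrightarrow(2)\Leftrightarrow(3)$ together with the final implication, using the structural results already established for totally definite quaternion algebras unramified at all finite places.

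First I would record the setup. Since $F$ has even degree over $\Q$ and $D$ is unramified everywhere at the finite places, $D$ satisfies the hypotheses that make it the Eichler-excluded case, and $\calO$ is a maximal $O_F$-order. The key structural input is that for a maximal order in such a $D$ one has $\calN(\wh\calO)=\wh F^\times\wh\calO^\times$; this is the condition cited just before the corollary, and under it Proposition~\ref{prop:class-type-number} gives $t(D)=r(\calO)$. Moreover, since $\calO$ is an $O_F$-order it is automatically stable under the canonical involution (so $\wt A=A=O_F$), as noted after~\eqref{eq:9}.

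For $(1)\Leftrightarrow(2)$: the $\Pic(O_F)$-action on $\Cl(\calO)$ is free precisely when every stabilizer $\Stab([I])$ is trivial. By~\eqref{eq:stabilizer}, $\Stab([I])$ is a union of kernels $\ker(i_{B/O_F})$ over optimal-embedding data, and by~\eqref{eq:tot-def-not-stab} combined with $\wt A=O_F$, any nontrivial stabilizer element lies in $\grC_2(O_F,O_F)=\grC_2(F)$. Conversely, the ``if and only if'' discussion culminating in~\eqref{eq:30} shows $\scrB=\emptyset$ exactly when $\grC_2(F)$ is trivial, and when $\scrB=\emptyset$ every $\Stab([I])$ is forced to be trivial by~\eqref{eq:stabilizer}. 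So freeness of the action is equivalent to triviality of $\grC_2(F)$. The main subtlety here is to check that when $\grC_2(F)$ is nontrivial the action genuinely fails to be free, i.e. that some class $[\gra]\in\grC_2(F)$ actually stabilizes some $[I]$; I would argue this by exhibiting an optimal embedding of the relevant CM order $B=F(\sqrt{-\tau})\cap\calO$ into the maximal order $\calO$, using that $D$ is unramified at all finite places so that every CM-extension of $F$ embeds optimally into a maximal order (a standard consequence of the local theory of optimal embeddings, e.g. via \cite{vigneras}). This existence-of-embeddings step is the place I expect the real work to sit.

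For $(2)\Leftrightarrow(3)$: since the action has $h(F)=h(O_F)$-many potential orbit-stabilizer interplay and $\calO$ is involution-stable, Proposition~\ref{prop:class-type-number} gives that a free action yields $t(D)=r(\calO)=h(\calO)/h(F)=h(D)/h(F)$, giving $(2)\Rightarrow(3)$. For the converse, the orbit-counting identity $h(D)=\sum_{\text{orbits}}\abs{\text{orbit}}$ together with $t(D)=r(\calO)$ shows that $t(D)=h(D)/h(F)$ forces every orbit to have full size $h(F)$, which by Lemma~\ref{endo.2} (each orbit size is a divisor of $h(F)$, and divisibility of orbit sizes) means every orbit is a single free $\Pic(O_F)$-orbit, i.e. the action is free. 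Finally, for the last sentence: if $h(F)$ is odd, then $\Pic(F)$ has odd order, so its $2$-torsion $\grC_2(F)\subseteq\Pic(O_F)[2]$ is trivial, whence $(1)$ holds and therefore $(2)$ and $(3)$ hold as well; this also follows directly from Corollary~\ref{endo.4} applied with $A=\wt A=O_F$.
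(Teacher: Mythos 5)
Your skeleton matches the paper's proof: $t(D)=r(\calO)$ from the normalizer condition $\calN(\wh\calO)=\wh F^\times\wh\calO^\times$ together with Proposition~\ref{prop:class-type-number}; the equivalence $(2)\Leftrightarrow(3)$ by orbit counting (free action iff $r(\calO)=h(\calO)/h(F)$); the equivalence $(1)\Leftrightarrow(2)$ via \eqref{eq:stabilizer}, \eqref{eq:tot-def-not-stab} and \eqref{eq:30}; and the ``moreover'' clause from oddness of $h(F)$ killing $2$-torsion (the paper leaves that last point implicit; your argument for it is correct, and Corollary~\ref{endo.4} also covers it).

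There is, however, a genuine gap in the one direction you yourself flag as ``where the real work sits'': showing that a nontrivial $[\gra]\in\grC_2(F)$ actually stabilizes some ideal class. Your plan --- exhibit an optimal embedding of $B=F(\sqrt{-\tau})\cap\calO$ into the fixed maximal order $\calO$ --- fails on two counts. First, for an arbitrary embedding $\varphi\colon K=F(\sqrt{-\tau})\hookrightarrow D$, the order $B=\varphi^{-1}(\varphi(K)\cap\calO)$ is optimally embedded in $\calO$ by definition, but nothing forces $[\gra]\in\ker(i_{B/O_F})$: one only has $\ker(i_{B/O_F})\subseteq\ker(i_{K/F})$, and the inclusion can be strict --- Lemma~\ref{lem:suborder-nontrivial-kernel} shows that exactly the suborders $B$ containing a specific order $B_0$ pick up the kernel. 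So the relevant CM order must be $O_K$ itself, not the intersection of $K$ with the given $\calO$. Second, one cannot in general embed $O_K$ optimally into the \emph{given} maximal order $\calO$: Eichler's formula \eqref{eq:27} only guarantees that the number of optimal embeddings summed over \emph{all} ideal classes $[I_j]\in\Cl(\calO)$ is positive, i.e.\ that \emph{some} left order $\calO_l(I_j)$ in the genus receives an embedding; which types receive one is precisely the selectivity issue, which is most delicate exactly when $D$ is unramified at all finite places --- the case at hand. The paper's construction repairs both points at once: fix $\varphi\colon K\hookrightarrow D$ (possible by the local--global principle since $D$ is ramified only at infinite places), choose any maximal order $\calO'\supseteq\varphi(O_K)$ (this embedding is automatically optimal because $O_K$ is the maximal order of $K$), and set $I:=\calO'\calO$, a right $\calO$-ideal with $\calO_l(I)=\calO'$; then $\dbr{O_K}\in\scrB(\calO_l(I))$, so \eqref{eq:stabilizer} gives $\ker(i_{K/F})\subseteq\Stab([I])$ and the action is not free. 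Without this (or an equivalent) construction, your argument does not close.
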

\begin{proof}
By \cite[Section~II.2]{vigneras},   we have  $\calN(\wh \calO^\times)=\wh F^\times \wh
  \calO^\times$ in this case, and hence $t(\calO)=r(\calO)\geq
  h(D)/h(F)$.  The equality $r(\calO)=
  h(D)/h(F)$ holds  if and only if the action of
  $\Pic(O_F)$ on $\Cl(\calO)$ is free, establishing the equivalence
  between (2) and (3).

  For the equivalence between (1) and (2), it is enough to show that
  the latter holds if and only if $\scrB=\emptyset$.  The sufficiency
  is clear by (\ref{eq:stabilizer}). As $D$ is unramified at
  all finite places, any CM extension $K$ of $F$ can be embedded into
  $D$ by the local-global principle. If
  $\scrB\neq \emptyset$, then there exists a CM-extension $K$ of $F$
  such that $\ker(i_{K/F})\neq \{1\}$. Fix an $F$-embedding $\varphi$
  from such a $K$ into $D$.  The image $\varphi(O_K)$ is contained in
  a maximal $O_F$-order $\calO'$. Set $I:=\calO'\calO$, which is a
  fractional right $\calO$-ideal with $\calO_l(I)=\calO'$. We have
  $\dbr{O_K}\in \scrB(\calO_l(I))$, and hence
  $\ker(i_{K/F})\subseteq \Stab([I])$ by (\ref{eq:stabilizer}).
  Therefore, $\Pic(O_F)$ acts freely on $\Cl(\calO)$ if and only if
  $\scrB=\emptyset$.
\end{proof}







It is possible that $\grC_2(F)$ is trivial despite $h(F)$ being
even. Indeed, one of such examples is given by $F=\Q(\sqrt{34})$ (see 
\cite[Section~12, p.~64]{Conner-Hurrelbrink}).  We provide two
applications of Corollary~\ref{2.7}. First, if $F$ is a real
quadratic field satisfying condition (1) of Corollary~\ref{2.7} and
$D$ is a totally definite quaternion $F$-algebra unramified at all
finite places of $F$, then an explicit formula for $t(D)$ of $D$ is given in \cite[p.~212]{vigneras:inv}.

Second, recall that $\calO^\times/O_F^\times$ is a finite 
group \cite[Theorem~V.1.2]{vigneras}. For any finite group $G$, we
define
\begin{align}
 h(D, G)&:=\abs{\{[I]\in \Cl(\calO)\mid \calO_l(I)/A^\times\simeq
  G\}},\\ t(D, G)&:=\abs{\{\dbr{\calO'}\in \calT(D)\mid \calO'/A^\times\simeq
  G\}}.
\end{align}
Here $h(D, G)$ does
not depends on the choice of the maximal order $\calO$, hence
denoted as such. 
The following result is used in \cite{li-xue-yu:unit-gp}. 
\begin{cor}
Keep the assumptions of Corollary~\ref{2.7} and suppose that $\grC_2(F)$
is trivial.  Then $h(D, G)=h(F)
t(D, G)$ for any finite group $G$. 
\end{cor}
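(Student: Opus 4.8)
The plan is to exploit the free action furnished by Corollary~\ref{2.7} and to reorganize the fibers of the surjection $\Upsilon:\Cl(\calO)\twoheadrightarrow \calT(D)$ of (\ref{eq:32}) according to the group $G$. Since $\grC_2(F)$ is trivial by hypothesis, Corollary~\ref{2.7} tells us that $\Pic(O_F)$ acts freely on $\Cl(\calO)$. Here $A=O_F$, so $A^\times=O_F^\times$ and $h(A)=h(F)$; moreover $\calO$ is a maximal $O_F$-order, hence stable under the canonical involution, and $\calN(\wh\calO)=\wh F^\times\wh\calO^\times$ by Vign\'eras. Thus all the hypotheses of Proposition~\ref{prop:class-type-number} are in force, and (\ref{eq:40}) gives that every fiber of $\Upsilon$ has cardinality exactly $h(F)$.

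The key step is to verify that the isomorphism type of the unit quotient $\calO_l(I)^\times/O_F^\times$ is constant along the fibers of $\Upsilon$, i.e. depends only on $\dbr{\calO_l(I)}\in\calT(D)$. If $\dbr{\calO_l(I)}=\dbr{\calO'}$, then $\calO_l(I)=x\calO'x^{-1}$ for some $x\in D^\times$. Because $O_F$ is central in $D$, conjugation by $x$ restricts to a group isomorphism $\calO'^\times\isoto\calO_l(I)^\times$ that fixes $O_F^\times$ pointwise, whence $\calO_l(I)^\times/O_F^\times\simeq\calO'^\times/O_F^\times$. Consequently the entire fiber $\Upsilon^{-1}(\dbr{\calO'})$ lies inside the set counted by $h(D,G)$ when $\calO'^\times/O_F^\times\simeq G$, and is disjoint from it otherwise.

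Summing the fiber cardinalities over the type classes with the prescribed unit quotient then yields
\[
h(D,G)=\sum_{\substack{\dbr{\calO'}\in\calT(D)\\ \calO'^\times/O_F^\times\simeq G}}\bigl|\Upsilon^{-1}(\dbr{\calO'})\bigr|=h(F)\,t(D,G),
\]
which is the desired identity. The only genuinely delicate point is the constancy of $G$ along the fibers of $\Upsilon$, but this reduces entirely to the centrality of $O_F$ in $D$ and is therefore routine; everything else is a counting argument built on the free $\Pic(O_F)$-action and the uniform fiber size $h(F)$ supplied by Corollary~\ref{2.7} and Proposition~\ref{prop:class-type-number}.
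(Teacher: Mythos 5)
Your proof is correct and follows essentially the same route as the paper: the paper's proof simply observes that the assumptions guarantee (\ref{eq:40}) holds in this setting, and the identity $h(D,G)=h(F)\,t(D,G)$ then follows by summing the uniform fiber sizes of $\Upsilon$ over the type classes with unit quotient $\simeq G$, using (as you verify explicitly) that $\calO'^\times/O_F^\times$ is a conjugation invariant. One caveat: your assertion that \emph{all} the hypotheses of Proposition~\ref{prop:class-type-number} are in force is not literally true, since its second part assumes $h(A)$ is odd, and triviality of $\grC_2(F)$ does not force $h(F)$ to be odd (the paper points to $F=\Q(\sqrt{34})$ as such an example). This does not damage your argument, because oddness of $h(A)$ enters the proposition's proof only to produce freeness of the $\Pic(A)$-action via Corollary~\ref{endo.4}, and you obtain freeness directly from Corollary~\ref{2.7}; that substitution is precisely what the paper means when it says the assumptions guarantee that (\ref{eq:40}) ``holds in the current setting as well.''
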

\begin{proof}
The assumptions guarantee that (\ref{eq:40}) holds in the current
setting as well. The corollary follows immediately. 
\end{proof}

We return to the general setting of this section, that is,  $\calO$ being a
proper $A$-order.   Following \cite[Section~3]{xue-yang-yu:ECNF}, we set up the
  following notation
  \begin{align*}
    w(\calO)&:=[\calO^\times: A^\times], & w(B)&:=[B^\times:
    A^\times],\\
    m(B, \calO)&:=\abs{\Emb(B, \calO)}, & m(B, \calO, \calO^\times)&:=\abs{\Emb(B, \calO)/\calO^\times},
  \end{align*}
where $\calO^\times$ acts on $\Emb(B, \calO)$ from the right by
$\varphi\mapsto g^{-1}\varphi g$ for all $\varphi\in \Emb(B,\calO)$
and $g\in \calO^\times$. Note that we have 
\begin{equation}
  \label{eq:25}
  m(B, \calO)=m(B, \calO, \calO^\times)w(\calO)/w(B).
\end{equation}
 For each prime $\ell\in \bbN$, let $B_\ell$
(resp.~$\calO_\ell$) be
the $\ell$-adic completion of $B$ (resp.~$\calO$). For simplicity, we
put 
\begin{equation}\label{eq:35}
m_\ell(B):=m(B_\ell, \calO_\ell,
\calO_\ell^\times)=\abs{\Emb(B_\ell, \calO_\ell)/\calO_\ell^\times}.
\end{equation}
It is well known \cite[Theorem~II.3.2]{vigneras} that $m_\ell(B)=1$ for almost all prime $\ell\in
\bbN$.

  \begin{thm}\label{thm:orbit-num-formula}
Let $A$ be a $\Z$-order in a totally real number field $F$, and
$\calO$ be a proper $A$-order in a totally
definite quaternion $F$-algebra $D$.  The number of orbits of the Picard group
action (\ref{eq:8}) can be calculated by the following formula: 
\begin{equation*}\label{eq:48}
r(\calO)=\frac{1}{h(A)}\left(h(\calO)+\sum_{1\neq [\gra]\in
      \grC_2(A, \wt A)}\ \sum_{\dbr{B}\in \scrB_{[\gra]}} \frac{1}{2}(2-\delta(B))h(B)\prod_{\ell}m_\ell(B) \right),
\end{equation*}
where $\grC_2(A, \wt A)$ is the subgroup of $\Pic(A)$ defined in (\ref{eq:def-grC2}), and $\scrB_{[\gra]}$ is the
set of isomorphism classes of CM proper $A$-orders defined in
(\ref{eq:13}).   Moreover, if $A=O_F$, then 
  \begin{equation}\label{eq:49}
r(\calO)=\frac{1}{h(F)}\left(h(\calO)+\frac{1}{2}\sum_{\dbr{B}\in \scrB} h(B)\prod_{\ell}m_\ell(B) \right),    
  \end{equation}
where $\scrB$ is the set of isomorphism classes of CM $O_F$-orders in (\ref{eq:41})
with $A=O_F$. 
  \end{thm}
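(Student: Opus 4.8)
The plan is to compute $r(\calO)$ via a Burnside-style / orbit-counting argument, leveraging the identification $r(\calO) = |\Pic(A)\backslash \Cl(\calO)|$ together with the stabilizer description already established. The starting point is the orbit-stabilizer relation: since $\Pic(A)$ acts on the finite set $\Cl(\calO)$, Burnside's lemma gives
\[
r(\calO) = \frac{1}{h(A)}\sum_{[\gra]\in\Pic(A)} \#\Fix([\gra]),
\]
where $\Fix([\gra]) = \{[I]\in\Cl(\calO)\mid [\gra I]=[I]\}$. The term $[\gra]=1$ contributes all of $\Cl(\calO)$, i.e.\ $h(\calO)$. For $[\gra]\neq 1$, the condition $[\gra]\in\Stab([I])$ is exactly controlled by Corollary~\ref{endo.1} and \eqref{eq:stabilizer}: $[\gra]$ fixes $[I]$ iff $[\gra]\in\ker(i_{B/A})$ for some $\dbr{B}\in\scrB(\calO_l(I))$, which forces $[\gra]\in\grC_2(A,\wt A)$ by \eqref{eq:tot-def-not-stab}. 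So only the classes in $\grC_2(A,\wt A)\setminus\{1\}$ can contribute, matching the outer sum in the formula.

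Next I would rewrite $\sum_{1\neq[\gra]}\#\Fix([\gra])$ by interchanging the order of summation, grouping the fixed ideal classes $[I]$ according to which CM proper $A$-order $B=F[\lambda]\cap\calO_l(I)$ witnesses the stabilization. For a fixed nontrivial $[\gra]\in\grC_2(A,\wt A)$ and a fixed $\dbr{B}\in\scrB_{[\gra]}$, I need to count the ideal classes $[I]$ such that $B$ optimally embeds into $\calO_l(I)$ in a way compatible with $[\gra]$. This is where the \textbf{Eichler-type mass/embedding counting} enters: the total number of such $[I]$, weighted appropriately, is governed by the local embedding numbers $\prod_\ell m_\ell(B)$ and the global class number $h(B)$ of $B$. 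The standard optimal-embedding formula counts $\sum_{[I]} m(B,\calO_l(I),\calO_l(I)^\times)$ as $h(B)\prod_\ell m_\ell(B)$, and I must translate this into a count of fixed ideal classes.

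\textbf{The main obstacle is the factor $\tfrac12(2-\delta(B))$}, which records whether $B$ is stable under the involution $x\mapsto\bar x$ of $K/F$. The subtlety is that each CM order $B$ may embed into $\calO_l(I)$ via an embedding $\varphi$ and also via the conjugate embedding $\bar\varphi$, and these two embeddings give rise to the \emph{same} stabilizing element $[\gra]$ precisely when $B=\bar B$ (i.e.\ $\delta(B)=1$), but to genuinely distinct data when $\delta(B)=0$. Thus when $\delta(B)=1$ one overcounts by a factor of $2$ and must divide, giving the coefficient $\tfrac12$; when $\delta(B)=0$ the two conjugate embeddings are separately counted, yielding coefficient $1=\tfrac12(2-0)$. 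Pinning down this combinatorics carefully—reconciling the $\calO_l(I)^\times$-orbit count of optimal embeddings with the actual number of fixed points of $[\gra]$, and verifying that the contribution of $B$ and $\bar B$ is correctly bookkept so that summing over $\dbr{B}\in\scrB_{[\gra]}$ (isomorphism classes, not embeddings) produces exactly $\tfrac12(2-\delta(B))h(B)\prod_\ell m_\ell(B)$—is the technical heart of the argument. Finally, for the special case $A=O_F$, Lemma~\ref{lem:kernel-2-torsion} and \eqref{eq:30} give $|\ker(i_{B/A})|=2$ and $\delta(B)=1$ for all relevant $B$, so every $\dbr{B}\in\scrB$ lies in exactly one $\scrB_{[\gra]}$ with coefficient $\tfrac12$; collapsing the double sum then yields the simplified formula~\eqref{eq:49}.
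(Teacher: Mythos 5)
Your proposal is correct and takes essentially the same route as the paper's own proof: Burnside's lemma, restriction of the nontrivial contributions to $\grC_2(A,\wt A)$ via \eqref{eq:tot-def-not-stab}, conversion of fixed-point counts into optimal-embedding counts via $\sum_j m(B,\calO_l(I_j),\calO_l(I_j)^\times)=h(B)\prod_\ell m_\ell(B)$, the factor $\tfrac12(2-\delta(B))$ arising exactly as you describe from the two-to-one versus bijective behavior of $(\lambda,\varphi)\mapsto\varphi(\lambda)$ according to whether $B=\bar B$, and the collapse to \eqref{eq:49} via Lemma~\ref{lem:kernel-2-torsion} and \eqref{eq:30} when $A=O_F$. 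The bookkeeping you flag as the technical heart is precisely what the paper's Proposition~\ref{prop:fixed-number} carries out, by introducing the witness sets $X(\calO_l(I_j))=\{\lambda\in D^\times\mid \gra\,\calO_l(I_j)=\lambda\,\calO_l(I_j)\}$ modulo $A^\times$ and using the simply transitive right $\calO_l(I_j)^\times$-action on them to turn the embedding counts into fixed-point indicators.
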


The proof of this theorem relies on Burnside's lemma
\cite[Theorem~2.113]{Rotman-alg}, which we recall briefly for the
convenience of the reader. 

\begin{lem}[Burnside's lemma]
  Let $G$ be a finite group acting on a finite set $X$, and $r$ be the
  number of orbits. For each $g\in
  G$, let $X^g\subseteq X$ be the subset of elements fixed by $g$. Then  $r=\frac{1}{G}\sum_{g\in G}\abs{X^g}$. 
\end{lem}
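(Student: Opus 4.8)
The plan is to prove the identity by double-counting the set of fixed pairs
\[
S=\{(g,x)\in G\times X\mid g\cdot x=x\},
\]
and then to invoke the orbit--stabilizer theorem to repackage the count in terms of orbits. First I would evaluate $\abs{S}$ by summing over $g$ first: for each $g\in G$ the pairs in $S$ lying above $g$ are precisely those $x$ with $g\cdot x=x$, i.e.\ the elements of $X^g$. This gives $\abs{S}=\sum_{g\in G}\abs{X^g}$, which is $\abs{G}$ times the right-hand side of the asserted formula.

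Next I would evaluate $\abs{S}$ by summing over $x$ first. For fixed $x\in X$, the set of $g\in G$ with $g\cdot x=x$ is by definition the stabilizer $\Stab(x)$, so $\abs{S}=\sum_{x\in X}\abs{\Stab(x)}$. The key input at this point is the orbit--stabilizer theorem: since $G$ is finite, the map $g\mapsto g\cdot x$ descends to a bijection $G/\Stab(x)\isoto Gx$ between the coset space and the orbit $Gx$ of $x$, whence $\abs{\Stab(x)}=\abs{G}/\abs{Gx}$. Substituting yields $\abs{S}=\sum_{x\in X}\abs{G}/\abs{Gx}$.

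Finally I would regroup this last sum according to the orbits of the action. Every point lying in a given orbit $Gx$ shares the same orbit size $\abs{Gx}$, and that orbit contains exactly $\abs{Gx}$ points, so the total contribution of the orbit is $\abs{Gx}\cdot\bigl(\abs{G}/\abs{Gx}\bigr)=\abs{G}$. As there are $r$ orbits in all, summing gives $\abs{S}=r\abs{G}$. Comparing the two evaluations of $\abs{S}$ produces $\sum_{g\in G}\abs{X^g}=r\abs{G}$, and dividing by $\abs{G}$ delivers the formula. The argument is entirely elementary; the only nontrivial ingredient is the orbit--stabilizer theorem, and the sole point requiring care is that the finiteness of $G$ is precisely what makes the coset index $\abs{G}/\abs{Gx}$ meaningful, so no genuine obstacle arises.
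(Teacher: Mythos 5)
Your proof is correct: the double count of the incidence set $S=\{(g,x)\mid g\cdot x=x\}$ combined with the orbit--stabilizer theorem is the standard argument for Burnside's lemma, and every step (including the regrouping of $\sum_{x}\abs{G}/\abs{Gx}$ by orbits) is sound. The paper itself gives no proof --- it simply cites Rotman --- and the cited proof is exactly this double-counting argument, so your approach coincides with the intended one.
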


Hence we reduce the proof of Theorem~\ref{thm:orbit-num-formula} to the calculation of
$\abs{\Cl(\calO)^{[\gra]}}$ for  each
$[\gra]\in \Pic(A)$, where $\Cl(\calO)^{[\gra]}\subseteq \Cl(\calO)$ denotes the subset of locally
   principal right $\calO$-ideal classes fixed by $[\gra]$.    By
  \eqref{eq:tot-def-not-stab}, if $[\gra]\not\in \grC_2(A, \wt A)$, then
  $\Cl(\calO)^{[\gra]}=\emptyset$.




\begin{prop} \label{prop:fixed-number}
For each nontrivial $A$-ideal class $[\gra]\in \grC_2(A, \wt A)$,
  \begin{equation}
    \label{eq:14}
\abs{\Cl(\calO)^{[\gra]}}=    \sum_{\dbr{B}\in \scrB_{[\gra]}}
\frac{1}{2}(2-\delta(B))h(B)\prod_{\ell}m_\ell(B). 
  \end{equation}
\end{prop}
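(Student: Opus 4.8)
The plan is to compute the left--hand side one ideal class at a time and then globalize via Eichler's formula on optimal embeddings. Fix representatives $I_1,\dots,I_h$ of $\Cl(\calO)$ (so $h=h(\calO)$), with left orders $\calO_j:=\calO_l(I_j)$. By \eqref{eq:stabilizer} together with Corollary~\ref{endo.1}, $[I_j]\in\Cl(\calO)^{[\gra]}$ if and only if some $B$ with $\dbr B\in\scrB_{[\gra]}$ admits an optimal embedding into $\calO_j$. The goal is the pointwise identity
\[
\sum_{\dbr B\in\scrB_{[\gra]}}\tfrac12\bigl(2-\delta(B)\bigr)\,m(B,\calO_j,\calO_j^\times)=
\begin{cases}1 & [I_j]\in\Cl(\calO)^{[\gra]},\\ 0 & \text{otherwise.}\end{cases}
\]
Summing over $j$ and inserting Eichler's formula $\sum_j m(B,\calO_j,\calO_j^\times)=h(B)\prod_\ell m_\ell(B)$ (see \cite[Section~3]{xue-yang-yu:ECNF} and \cite[Ch.~III]{vigneras}) then yields \eqref{eq:14}. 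The vanishing for non-fixed classes is immediate, since then no $B\in\scrB_{[\gra]}$ embeds into $\calO_j$, so I concentrate on a fixed $[I_j]$.

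For such a $[I_j]$, Lemma~\ref{lem:normalizer-stablizer} produces a distinguished element $\lambda\in\calN_0(\calO_j)$, well defined modulo $F^\times\calO_j^\times$ and corresponding to $[\gra]$, with $\gra\calO_j=\lambda\calO_j$ and $\lambda\notin F$ (as $[\gra]\neq1$). For any optimal embedding $\varphi\colon B\hookrightarrow\calO_j$ with $\dbr B\in\scrB_{[\gra]}$, choose $\mu_B\in K_B$ with $\gra B=\mu_B B$; a local principality check (as in \eqref{eq:square-norm-lambda}) gives $\gra\calO_j=\varphi(\mu_B)\calO_j$, whence $\varphi(\mu_B)$ and $\lambda$ generate the same principal right ideal and $\varphi(\mu_B)\in\lambda\calO_j^\times$, so $\varphi(K_B)=F[\varphi(\mu_B)]$. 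The central claim I will establish is a rigidity statement: every optimal embedding arising this way is $\calO_j^\times$-conjugate either to the inclusion $\iota$ of $B_0:=F[\lambda]\cap\calO_j$ or to the twisted embedding $\iota\circ\tau$, where $\tau$ is the $K_B/F$-conjugation; thus the embeddings attached to $[I_j]$ reduce to the single conjugate pair $\{\iota,\iota\circ\tau\}$.

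The weight $\tfrac12(2-\delta(B))$ is exactly what turns this pair into the count $1$. Since the canonical involution of $D$ restricts to $\tau$ on $K_B$, one has $(\iota\circ\tau)(\mu_B)=\overline{\iota(\mu_B)}$, and $\iota\circ\tau$ is an optimal embedding into $\calO_j$ precisely when $B_0=\bar B_0$, i.e. $\delta(B_0)=1$. In that case $\iota$ and $\iota\circ\tau$ are distinct modulo $\calO_j^\times$ yet represent the same isomorphism class $\dbr{B_0}$, so $\sum_{\dbr B\in\scrB_{[\gra]}}m(B,\calO_j,\calO_j^\times)=m(B_0,\calO_j,\calO_j^\times)=2$ and the weighted sum is $\tfrac12\cdot2=1$. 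When $\delta(B_0)=0$ the twisted embedding has image order $\bar B_0\neq B_0$, so only $\iota$ is optimal for $\calO_j$, giving $m(B_0,\calO_j,\calO_j^\times)=1$ and weighted sum $1$. (Lemma~\ref{lem:kernel-2-torsion} forces $[\gra]$ to be $2$-torsion whenever $\delta(B_0)=1$, which is what makes $\iota\circ\tau$ realize the same class $[\gra]$ rather than $[\gra]^{-1}$.)

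The main obstacle is the rigidity claim of the second step: ruling out spurious optimal embeddings whose image field $F[\lambda w]$, $w\in\calO_j^\times$, differs from $F[\lambda]$, and showing they are all conjugate to $\iota$ or $\iota\circ\tau$. This is transparent when $\calO_j^\times=O_F^\times$ (then $\lambda w$ and $\lambda$ span the same field and the pair $\{\iota,\iota\circ\tau\}$ is forced), but it is delicate when $\calO_j^\times$ properly contains $O_F^\times$ and when $\calO_j$ is not stable under the canonical involution (the case $\wt A\neq A$, where $\delta(B)=0$ can occur). The heart of the argument is therefore an analysis of the twisted $\calO_j^\times$-conjugacy action on the coset $\lambda\calO_j^\times$, using the finiteness of $\calO_j^\times/O_F^\times$ to see that exactly one (when $\delta=0$) or two (when $\delta=1$) conjugacy classes of CM-generators survive.
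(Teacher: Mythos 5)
Your global architecture coincides with the paper's: both reduce \eqref{eq:14} to the pointwise identity
\[
\sum_{\dbr B\in\scrB_{[\gra]}}\tfrac12\bigl(2-\delta(B)\bigr)\,m(B,\calO_j,\calO_j^\times)=
\begin{cases}1 & \text{if } [I_j]\in\Cl(\calO)^{[\gra]},\\ 0 & \text{otherwise,}\end{cases}
\]
and then sum over $j$ using \eqref{eq:27}. The problem is that your proof of this identity rests entirely on the ``rigidity claim,'' which you explicitly leave unproven, and which is moreover false as stated. First, $\calO_j^\times$-conjugation preserves the isomorphism class of the image of an optimal embedding, so embeddings of non-isomorphic orders are never conjugate; yet nothing forces $\scrB_{[\gra]}(\calO_j)$ to be a singleton. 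The generators $\lambda u$, $u\in\calO_j^\times$, of $\gra\calO_j$ can generate non-isomorphic orders $F[\lambda u]\cap\calO_j$ as $u$ varies, and the configuration of two distinct classes $\dbr{B_1}\neq\dbr{B_2}$, both with $\delta=1$ and $m(B_i,\calO_j,\calO_j^\times)=1$, is perfectly consistent with the identity ($\tfrac12+\tfrac12=1$); the sum over $\dbr B\in\scrB_{[\gra]}$ in \eqref{eq:14} exists precisely to allow this. Second, even when a single class $\dbr{B_0}$ with $\delta(B_0)=1$ occurs, your assertion that $\iota$ and $\iota\circ\tau$ are ``distinct modulo $\calO_j^\times$'' needs proof and can fail: a unit of $\calO_j$ may induce the Galois involution on $B_0$ (this happens, e.g., for $\Z[i]$ inside the Hurwitz quaternions, where conjugation by the unit $j$ sends $i\mapsto -i$), in which case $\iota\sim\iota\circ\tau$, the class $\dbr{B_0}$ contributes only $\tfrac12$, and the missing half must come from a second isomorphism class --- exactly the situation your rigidity claim rules out.

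The paper's proof needs no rigidity statement at all. It introduces $X(\calO_j)=\{\lambda\in D^\times\mid \gra\calO_j=\lambda\calO_j\}$, observes that $\calO_j^\times$ acts simply transitively on it from the right (so $\abs{X(\calO_j)/A^\times}/w(\calO_j)=1$ whenever $[I_j]$ is fixed), and then partitions $X(\calO_j)$ by the isomorphism class of $F(\lambda)\cap\calO_j$. The key lemma is that the natural map $X(B)\times\Emb(B,\calO_j)\to X(\calO_j,\dbr B)$, $(\mu,\varphi)\mapsto\varphi(\mu)$, is bijective when $\delta(B)=0$ and two-to-one when $\delta(B)=1$; counting each piece via \eqref{eq:25} and summing recovers the weighted identity with no control needed over how many classes occur, which embeddings are conjugate, or how the units of $\calO_j$ interact with $B_0$. (It is a statement about fibers of this map, not about conjugacy of $\varphi$ and $\varphi\circ\tau$.) If you want to salvage your approach, the ``analysis of the twisted $\calO_j^\times$-conjugacy action on $\lambda\calO_j^\times$'' you defer would have to reproduce precisely this bookkeeping --- note also that a single embedding $\varphi$ already yields $w(B)$ generators $\varphi(\mu)$ with $\mu\in X(B)/A^\times$, so conjugacy classes of generators and of embeddings are not in bijection --- at which point you will have rewritten the paper's argument.
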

\begin{proof}
  We fix a complete set of representatives
  $\{I_j\mid 1\leq j \leq h:=h(\calO)\}$ of the ideal classes in
  $\Cl(\calO)$,  and define $\calO_j:=\calO_l(I_j)$. For each $1\leq
  j\leq h$, consider the
  following two sets
  \begin{equation}\label{eq:20}
    X(\calO_j):=\{\lambda \in D^\times\mid \gra \calO_j=\lambda
                \calO_j\},\qquad 
    Y(\calO_j):=X(\calO_j)/A^\times,
  \end{equation}
where $A^\times$ acts on $X(\calO_j)$ by multiplication.  Note that
$X(\calO_j)\cap F=\emptyset$ since $\gra$ is assume to be  non-principal. By
(\ref{eq:17}), we have 
\begin{equation}
  \label{eq:16}
    X(\calO_j)\neq \emptyset \quad \text{if and only if}\quad [I_j]\in
    \Cl(\calO)^{[\gra]}.  
\end{equation}
If $X(\calO_j)\neq \emptyset$, then $\calO_j^\times$ acts simply transitively
on $X(\calO_j)$ 
from the right by multiplication.  Thus 
\begin{equation}
  \label{eq:18}
\frac{\abs{Y(\calO_j)}}{w(\calO_j)}=
  \begin{cases}
  1 \qquad &\text{if } [I_j]\in
    \Cl(\calO)^{[\gra]},\\
    0 \qquad &\text{otherwise.}
  \end{cases}
\end{equation}

We count the cardinality $\abs{Y(\calO_j)}$ in another way. 
By Corollary~\ref{endo.1}, each $\lambda \in X(\calO_j)$ gives rise to a CM
proper $A$-order $B_\lambda:=F(\lambda)\cap \calO_j$ such that
$[\gra]\in \ker(i_{B_\lambda/A})$. In other words, the $A$-isomorphism class $\dbr{B_\lambda}$
belongs to the set $\scrB_{[\gra]}(\calO_j)$ defined in
(\ref{eq:19}). Thus we have 
\begin{align}
  \label{eq:23}
   X(\calO_j)&=\coprod_{\dbr{B}\in
     \scrB_{[\gra]}(\calO_j)}X(\calO_j,\dbr{B}),\qquad \text{where}\\
X(\calO_j,\dbr{B})&:=\{\lambda \in X(\calO_j)\mid B_\lambda \simeq
                    B\}\subseteq  X(\calO_j)\subset D^\times. 
\end{align}
Clearly, each $X(\calO_j,\dbr{B})$ is $A^\times$-stable, so we set
$Y(\calO_j,\dbr{B}):=X(\calO_j,\dbr{B})/A^\times$.  
Let $K$ be the fractional field of $B$. Similar to (\ref{eq:20}), we
define 
\begin{equation}
  \label{eq:22}
    X(B):=\{\lambda \in K^\times\mid \gra B=\lambda
                B\},\qquad 
    Y(B):=X(B)/A^\times.
\end{equation}
Then $\abs{Y(B)}=w(B):=[B^\times:A^\times]$. 

For each $\dbr{B}\in
     \scrB_{[\gra]}(\calO_j)$, we have an $A^\times$-equivariant surjective map 
\begin{equation}
  \label{eq:24}
  X(B)\times \Emb(B,\calO_j)\to X(\calO_j,\dbr{B}), \qquad (\lambda,
  \varphi)\mapsto \varphi(\lambda).
\end{equation}
Let $\varphi$ and $\varphi'$ be two distinct optimal embeddings of $B$ into
$\calO_j$. If $\varphi(B)\neq
\varphi'(B)$, then $\varphi(X(B))\cap \varphi'(X(B))=\emptyset$. On
the other hand, $\varphi(B)=\varphi'(B)$ if and only if 
\[\varphi'=\bar\varphi, \qquad B=\bar B.\]
Therefore, if $\delta(B)=0$, then the map in (\ref{eq:24}) is
bijective; otherwise it is two-to-one. It follows that from
(\ref{eq:25}) that 
\begin{equation}
  \label{eq:26}
  \frac{\abs{Y(\calO_j,\dbr{B})}}{w(\calO_j)}=\frac{1}{2}(2-\delta(B))\frac{w(B)m(B,
  \calO_j)}{w(\calO_j)}=\frac{1}{2}(2-\delta(B))m(B, \calO_j, \calO_j^\times).
\end{equation}
A priori, (\ref{eq:26}) holds for all $\dbr{B}\in
\scrB_{[\gra]}(\calO_j)$, but we may extend it to all members of
$\scrB_{[\gra]}$. Indeed, if $\dbr{B'}\in \scrB_{[\gra]}-\scrB_{[\gra]}(\calO_j)$,
then both ends of (\ref{eq:26}) are zero since the sets $\Emb(B', \calO_j)$ and
$X(\calO_j,\dbr{B'})$ are both empty. Similarly, the 
disjoint union in (\ref{eq:23}) can be taken to range over all $
\dbr{B}\in\scrB_{[\gra]}$.

By \cite[Theorem 5.11, p.~92]{vigneras} (see also
\cite[Lemma~3.2]{wei-yu:classno} and
\cite[Lemma~3.2.1]{xue-yang-yu:ECNF}), 
\begin{equation}
  \label{eq:27}
  \sum_{j=1}^h m(B, \calO_j, \calO_j^\times) =h(B)\prod_{\ell}m_\ell(B).
\end{equation}
Summing (\ref{eq:26}) over all $\dbr{B}\in \scrB_{[\gra]}$ and  $1\leq j \leq h$, we obtain 
\begin{equation}
  \label{eq:28}
 \sum_{\dbr{B}\in \scrB_{[\gra]}} \sum_{j=1}^h  \frac{\abs{Y(\calO_j,\dbr{B})}}{w(\calO_j)}=\sum_{\dbr{B}\in \scrB_{[\gra]}}\frac{1}{2}(2-\delta(B))h(B)\prod_{\ell}m_\ell(B).
\end{equation}
We exchange the order of summation on the left of (\ref{eq:28}) and apply
(\ref{eq:18}) to get 
\begin{equation}
  \label{eq:29}
  \sum_{j=1}^h\sum_{\dbr{B}\in \scrB_{[\gra]}}
  \frac{\abs{Y(\calO_j,\dbr{B})}}{w(\calO_j)}=
  \sum_{j=1}^h\frac{\abs{Y(\calO_j)}}{w(\calO_j)}=  \abs{ \Cl(\calO)^{[\gra]}}. 
\end{equation}
This concludes the proof of the proposition.
\end{proof}
\begin{proof}[Proof of Theorem~\ref{thm:orbit-num-formula}]
The general formula for $r(\calO)$ follows by combining
Burnside's lemma with Proposition~\ref{prop:fixed-number}. Assume
that $A=O_F$. Then  $\delta(B)=1$ for all $B$. Recall that 
$\scrB_{[\gra]}\neq \emptyset$ if and only if $1\neq [\gra]\in \grC_2(F)$.
Thus \eqref{eq:49} is a direct consequence of (\ref{eq:30}). 
\end{proof}

As a consequence of Theorem~\ref{thm:orbit-num-formula}, we prove the following slight generalization of \cite[Corollaire~1.1, p.~81]{vigneras:ens}, which is stated for Eichler orders. 
\begin{cor}
Let $D$ be a totally definite quaternion algebra over a totally real field $F$, and $\calO$ be an $O_F$-order in $D$. Then $h(F)$ divides $2h(\calO)$. 
\end{cor}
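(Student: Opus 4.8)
The plan is to read the divisibility directly off the orbit number formula (\ref{eq:49}) specialized to $A=O_F$. Since $\calO$ is an $O_F$-order, every CM proper $O_F$-order $B$ has $\delta(B)=1$, so (\ref{eq:49}) gives
\[
h(F)\,r(\calO)=h(\calO)+\frac{1}{2}\sum_{\dbr{B}\in\scrB}h(B)\prod_{\ell}m_\ell(B).
\]
Clearing the denominator and rearranging,
\[
2h(\calO)=2h(F)\,r(\calO)-\sum_{\dbr{B}\in\scrB}h(B)\prod_{\ell}m_\ell(B).
\]
Because $h(F)\mid 2h(F)\,r(\calO)$, it suffices to show that $h(F)$ divides $\sum_{\dbr{B}\in\scrB}h(B)\prod_{\ell}m_\ell(B)$; and since each $\prod_{\ell}m_\ell(B)$ is a nonnegative integer, it is enough to prove the termwise statement $h(F)\mid h(B)$ for every CM $O_F$-order $B$ whose class $\dbr{B}$ occurs in $\scrB$. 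Thus the whole corollary reduces to one arithmetic input about CM orders.

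The key step is to establish $h(F)\mid h(B)$, which I would do by exhibiting a surjective homomorphism $\Pic(B)\to\Pic(O_F)$. Let $K=B\otimes_{O_F}F$ be the CM-extension that is the fraction field of $B$, and let $O_K$ be its maximal order, so $B\subseteq O_K$. I would use the composite
\[
\Pic(B)\twoheadrightarrow\Pic(O_K)\xrightarrow{\Nm_{K/F}}\Pic(O_F).
\]
The first arrow, extension of ideals $[\grb]\mapsto[\grb O_K]$ to the maximal order, is always surjective. The second arrow is surjective precisely because $K/F$ is a CM-extension: $K$ is ramified over $F$ at every archimedean place, whereas the Hilbert class field $H$ of $F$ (in the wide sense, the one corresponding to $\Pic(O_F)$) is unramified at all places. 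Hence $K\cap H=F$, and class field theory identifies $\coker\bigl(\Nm_{K/F}\colon\Pic(O_K)\to\Pic(O_F)\bigr)$ with $\Gal(K\cap H/F)=\{1\}$. The composite is therefore a surjection of finite abelian groups, so $h(F)=\abs{\Pic(O_F)}$ divides $\abs{\Pic(B)}=h(B)$, as needed; feeding this back into the displayed identity yields $h(F)\mid 2h(\calO)$.

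The main obstacle is that the CM orders $B$ that actually appear in $\scrB$ are in general \emph{non-maximal}, so the classical relation $h(F)\mid h(O_K)$ for CM-extensions (the $B=O_K$ case above) cannot be invoked directly. The role of the surjection $\Pic(B)\twoheadrightarrow\Pic(O_K)$ is exactly to bridge this gap, transporting the norm-surjectivity from the maximal order down to $B$. It is worth noting that the factor $2$ in $2h(\calO)$ is genuinely needed: it absorbs the $\tfrac12$ in (\ref{eq:49}), and when $h(F)$ is even one cannot expect $h(F)\mid h(\calO)$, consistent with the failure of $h(D)=h(F)\,t(D)$ discussed in the introduction.
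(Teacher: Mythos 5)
Your proof is correct and is essentially the paper's own argument: the paper likewise reads the divisibility off \eqref{eq:49} (where $\delta(B)=1$ since $\calO$ is an $O_F$-order) and reduces everything to the fact that $h(F)$ divides $h(B)$ for every CM $O_F$-order $B$. The only difference is that the paper cites this last fact as well known (Conner--Hurrelbrink, Corollary 5.4), whereas you supply a proof of it via the surjection $\Pic(B)\twoheadrightarrow\Pic(O_K)$ composed with the norm map $\Pic(O_K)\to\Pic(O_F)$, whose surjectivity for a CM extension follows from $K\cap H=F$; that argument is sound.
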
 
\begin{proof}
It is well known
\cite[Corollary~5.4]{Conner-Hurrelbrink} that $h(B)/h(F)$ is integral for
any CM $O_F$-order $B$. So the corollary follows directly from \eqref{eq:49}.
\end{proof}
\begin{cor}\label{cor:type-numer-tot-def-unram}
  Let $F$ be a totally real number field of even degree, and $D$
  be the unique totally definite 
  quaternion $F$-algebra up to isomorphism   
  unramified at all finite places of $F$.  We have 
  \begin{equation}
    \label{eq:33}
t(D)=\frac{1}{h(F)}\left(h(D)+ \frac{1}{2}\sum_{\dbr{B}\in \scrB} h(B) \right),    
  \end{equation}
\end{cor}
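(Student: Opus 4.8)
The plan is to apply the orbit number formula \eqref{eq:49} to a maximal $O_F$-order and then identify both sides with the quantities appearing in \eqref{eq:33}. First I would fix a maximal $O_F$-order $\calO$ in $D$. Since all maximal $O_F$-orders of $D$ lie in a single genus, one has $h(\calO)=h(D)$ and $t(\calO)=t(D)$ directly from the definitions recalled in Section~\ref{sec:02}. As $D$ is unramified at every finite place and $\calO$ is maximal, the normalizer identity $\calN(\wh\calO)=\wh F^\times\wh\calO^\times$ holds by \cite[Section~II.2]{vigneras}; this is the same input that underlies Corollary~\ref{2.7}. Proposition~\ref{prop:class-type-number} then yields $t(\calO)=r(\calO)$, so it remains to evaluate $r(\calO)$ by means of \eqref{eq:49}.

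The crux of the argument is to show that the local factor $\prod_\ell m_\ell(B)$ occurring in \eqref{eq:49} equals $1$ for every $\dbr{B}\in\scrB$; this is precisely where the hypothesis that $D$ be unramified at all finite places is used. Passing to completions, $\calO_\ell=\prod_{v\mid \ell}\calO_v$ and $B_\ell=\prod_{v\mid \ell}B_v$ factor over the finite places $v$ of $F$ above $\ell$, and correspondingly $\prod_\ell m_\ell(B)=\prod_v m(B_v,\calO_v,\calO_v^\times)$, the product ranging over all finite places of $F$. At each such place $D_v\simeq M_2(F_v)$ is split and $\calO_v$ is maximal, hence conjugate to $M_2(O_{F_v})$. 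The local theory of optimal embeddings \cite[Section~II.3]{vigneras} gives $m(B_v,\calO_v,\calO_v^\times)=1$ for every $O_{F_v}$-order $B_v$ in a separable quadratic $F_v$-algebra: indeed, optimal embeddings of $B_v$ into the endomorphism ring of a full lattice correspond, up to $\calO_v^\times$-conjugacy, to homothety classes of proper $B_v$-lattices in $K_v$, and these are classified by the local Picard group of $B_v$, which is trivial because $B_v$ is a local ring. Therefore $\prod_\ell m_\ell(B)=1$ for all $\dbr{B}\in\scrB$.

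Substituting $\prod_\ell m_\ell(B)=1$ into \eqref{eq:49} collapses the second summand to $\tfrac12\sum_{\dbr{B}\in\scrB}h(B)$, and replacing $r(\calO)$ by $t(D)$ and $h(\calO)$ by $h(D)$ produces the asserted formula \eqref{eq:33}. I expect the only genuine obstacle to be the justification that $m(B_v,\calO_v,\calO_v^\times)=1$ at every finite place: this is exactly the step that breaks down for non-maximal orders or for algebras ramified at a finite place, where nontrivial local embedding numbers enter the global count \eqref{eq:27}, and it is what is responsible for the clean form of the everywhere-unramified maximal case treated here.
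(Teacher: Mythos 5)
Your proposal is correct and follows essentially the same route as the paper: both identify $t(D)=r(\calO)$ for a maximal $O_F$-order $\calO$ via the normalizer identity $\calN(\wh\calO)=\wh F^\times\wh\calO^\times$ (Proposition~\ref{prop:class-type-number}, as in the proof of Corollary~\ref{2.7}) and then substitute $m_\ell(B)=1$ into \eqref{eq:49}. The only difference is that the paper obtains $m_\ell(B)=1$ by citing \cite[Theorem~II.3.2]{vigneras}, whereas you re-derive that local fact by the lattice/Picard-group argument --- which is sound, provided you note that $B_v$ may only be semilocal (e.g. $O_{F_v}\times O_{F_v}$ at split places) and that proper lattices over quadratic orders are invertible since such orders are Gorenstein; in either case the local Picard group is trivial and $m(B_v,\calO_v,\calO_v^\times)=1$ follows.
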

\begin{proof}
  As shown in the proof of Corollary~\ref{2.7}, we have
  $t(D)=r(\calO)$ for any maximal $O_F$-order $\calO$ in $D$.  By
  \cite[Theorem~II.3.2]{vigneras}, $m_\ell(B)=1$ for every prime
  $\ell\in \bbN$. The corollary then follows from \eqref{eq:49}. 
\end{proof}

Of course, (\ref{eq:33}) is just a special case of the general
type number formula for Eichler orders \cite[Theorem~A]{Pizer1973} (see also
\cite[Corollary~V.2.6]{vigneras}).  However, it describes  more concretely
the CM $O_F$-orders $B$ with nonzero contribution to
the type number. Thanks to this, we prove in Theorem~\ref{thm:integrality} that $h(D)/h(F)$ is
an integer, which is asserted by Vign\'eras \cite[Remarque, p.~82]{vigneras:ens}.  In light of  \eqref{eq:33}, it is enough to show the cardinality of
the following subset of $\scrB$ is even: 
\begin{equation}\label{eq:44}
\scrB_\odd:=\{\dbr{B}\in \scrB\mid h(B)/h(F)  \text{ is
  odd}\}.
\end{equation}
We will work out the details in Section~\ref{sec:integrality}.

\section{Calculation of type numbers}
\label{sec:03}

Throughout this section, we fix a prime number $p$. Let $q$ be an odd power
 of $p$, and $\Sp(\sqrt{q})$ and $\calT(\Sp(\sqrt{q}))$ be as in
Section~\ref{sec:01}. Let $F=\Q(\sqrt{p})$, $A=\Z[\sqrt{p}]$, and 
$D=\Bp\otimes_\Q F$ be the unique definite quaternion $F$-algebra
unramified at all finite places. We may regard $D$ as the
endomorphism algebra $\End_{\F_q}^0(X_0)$ of a member $X_0/\F_q$ in
$\Sp(\sqrt{q})$.


Let $\mathbb{O}_1$ be a maximal $O_F$-order in $D$. 
When $p\equiv 1 \pmod 4$, one has $A\neq O_F$, and $A/2O_F\cong
\ff_2$. In this case, we define $A$-orders $\bbO_r$ in $D$ for
$r\in\{8,16\}$ as follows. 
Let $\mathbb{O}_8, \mathbb{O}_{16}\subset \mathbb{O}_1$ be the proper
$A$-orders such that
\begin{gather}
  (\mathbb{O}_8)_2=
\begin{pmatrix}
  A_2 & 2 O_{F_2} \\  O_{F_2} & O_{F_2}\\
\end{pmatrix},\qquad (\mathbb{O}_{16})_2=\Mat_2(A_2),
\label{eq:bbOr1} \\   
(\mathbb{O}_r)_\ell=(\mathbb{O}_1)_\ell\qquad 
\text{for }  r\in \{8,16\}  \text{ and every prime } \ell\neq 2. \label{eq:bbOr2}
\end{gather}
Here for any $\Z$-module $M$ and any prime $\ell\in \bbN$, we write $M_\ell$
for $M\otimes \Z_\ell$. 
The order $\mathbb{O}_r$ is of 
index $r$ in $\mathbb{O}_1$.

By \cite[Theorem 6.1.2]{xue-yang-yu:ECNF} and \cite[Theorem
1.3]{xue-yang-yu:sp_as}, there is a bijection (depending on the choice of
a base point in each genus)
\begin{equation}
  \label{eq:3.3}
  \Sp(\sqrt{q})\simeq \coprod_{r} \Cl(\bbO_r),
\end{equation}
where $r\in \{1,8,16\}$ if $p\equiv 1 \pmod 4$ and $r=1$ otherwise. 
If $[I]$ is an ideal class corresponding to a 
class $[X]$ in $\Sp(\sqrt{q})$ in this bijection,  then  $\calO_l(I)\simeq \End(X)$.
Therefore, there is a bijection $\calT(\Sp(\sqrt{q}))\simeq \coprod_r
\calT(\bbO_r)$, and 
\begin{equation}
  \label{eq:3.4}
  T(p):=|\calT(\Sp(\sqrt{q}))|=
  \begin{cases}
    t(\bbO_1) & \text{for $p\not \equiv 1 \pmod 4$}; \\
    t(\bbO_1)+t(\bbO_8)+t(\bbO_{16}) & \text{for $p\equiv 1 \pmod 4$}. 
  \end{cases}
\end{equation}


Put $\bbO_4:=O_F \bbO_8$, which is of index $4$ in
$\bbO_1$. It is an Eichler order of level $2O_F$, and 
one has $\bbO_4=\bbO_1\cap 
w \bbO_1 w^{-1}$, where $w=
  \begin{pmatrix}
    0 & 2 \\ 1 & 0 \\
  \end{pmatrix}$. 
Since $D$ is unramified at all finite places, $\calN(\wh \bbO_1)
=\wh F^\times \wh \bbO_1^\times$. 
Clearly, $\calN(\wh \bbO_8)\subseteq \calN(\wh \bbO_4)$, and
$\calN(\wh\bbO_{16})\subseteq \calN(\wh\bbO_1)$. 
Direct calculations show that 
$\calN(\wh \bbO_8)=\wh F^\times \wh \bbO_4^\times$ 
and $\calN(\wh \bbO_{16})=\wh F^\times \wh \bbO_{16}^\times$ 
for $p\equiv 1 \pmod 4$. Therefore, we have natural bijections 
\begin{equation}
  \label{eq:3.5}
  \calT(\bbO_1)\simeq \frac{\Cl(\bbO_1)}{\Pic(O_F)}, \quad \calT(\bbO_8)
  \simeq \frac{\Cl(\bbO_4)}{\Pic(O_F)}, \quad \calT(\bbO_{16})\simeq
  \frac{\Cl(\bbO_{16})}{\Pic(A)}. 
\end{equation}

\begin{prop}\label{3.1}
  For every prime $p$, we have $t(\bbO_1)=h(\bbO_1)/h(F)$. Moreover,  if $p\equiv 1 \pmod 4$, then
 $t(\bbO_8)=h(\bbO_4)/h(F)$ and 
 $t(\bbO_{16})=h(\bbO_{16})/h(A)$.
\end{prop}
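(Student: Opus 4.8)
The plan is to prove each of the three equalities by showing that the relevant Picard group action on a class set is free, and then reading off the orbit number from the bijections in~\eqref{eq:3.5}. Concretely, \eqref{eq:3.5} identifies $\calT(\bbO_1)$, $\calT(\bbO_8)$, $\calT(\bbO_{16})$ with $\Pic(O_F)\backslash\Cl(\bbO_1)$, $\Pic(O_F)\backslash\Cl(\bbO_4)$, and $\Pic(A)\backslash\Cl(\bbO_{16})$ respectively, so that $t(\bbO_1)=r(\bbO_1)$, $t(\bbO_8)=r(\bbO_4)$, and $t(\bbO_{16})=r(\bbO_{16})$, the last two orbit numbers being taken for the displayed actions. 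Since a free action of $\Pic(A')$ (a group of order $h(A')$) on the finite set $\Cl(\calO)$ has $h(\calO)/h(A')$ orbits, it suffices in each case to verify freeness; the three equalities then follow with $A'=O_F$, $O_F$, and $A=\Z[\sqrt p]$ respectively.

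For the first equality I would invoke Corollary~\ref{2.7} directly: $\bbO_1$ is a maximal $O_F$-order in the totally definite quaternion algebra $D$ over the real quadratic (hence even-degree) field $F=\Q(\sqrt p)$, unramified at all finite places, so statement~(3) of that corollary, $t(D)=h(D)/h(F)$, holds as soon as $\grC_2(F)$ is trivial. The remaining point is the genus-theoretic claim that $\grC_2(F)$ is trivial for $F=\Q(\sqrt p)$ and every prime $p$. Recall $\grC_2(F)$ is the image of $\Pic^+(O_F)[2]$ under $\theta_{O_F}\colon\Pic^+(O_F)\to\Pic(O_F)$, and that $\ker\theta_{O_F}$, being a quotient of a group of signs, is an elementary abelian $2$-group, hence contained in $\Pic^+(O_F)[2]$. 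By genus theory the $2$-rank of $\Pic^+(O_F)$ equals $\nu-1$, where $\nu$ is the number of primes ramifying in $F/\Q$ \cite[Section~14]{Conner-Hurrelbrink}. When $p\equiv 1\pmod 4$ (and when $p=2$) there is a single ramified prime, so $\Pic^+(O_F)$ has odd order and $\grC_2(F)=\{1\}$. When $p\equiv 3\pmod 4$ there are two ramified primes, so $\Pic^+(O_F)[2]\cong\Z/2\Z$; but the fundamental unit then has norm $+1$, whence $\ker\theta_{O_F}\cong\Z/2\Z$ as well. As $\ker\theta_{O_F}\subseteq\Pic^+(O_F)[2]$ and both have order $2$, they coincide, so $\grC_2(F)=\theta_{O_F}(\ker\theta_{O_F})=\{1\}$. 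This settles the first equality for all $p$.

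For the two equalities in the case $p\equiv 1\pmod 4$ I would instead use Corollary~\ref{endo.4}, which produces a free action once the order is stable under the canonical involution and the class number of its center is odd. First, $p\equiv 1\pmod 4$ forces $\Pic^+(O_F)=\Pic(O_F)$ of odd order (the rank computation above with $\nu=1$), so $h(F)$ is odd; and the conductor formula applied to $A=\Z[\sqrt p]\subset O_F$ of conductor $2O_F$ gives $h(A)=h(O_F)\,\lvert(O_F/2O_F)^\times\rvert/\varpi$ with $\varpi=[O_F^\times:A^\times]\in\{1,3\}$ and $\lvert(O_F/2O_F)^\times\rvert\in\{1,3\}$, a ratio of odd integers, so $h(A)$ is odd too. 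Second, $\bbO_4=O_F\bbO_8$ is an Eichler order, hence stable under the canonical involution, so Corollary~\ref{endo.4} makes the $\Pic(O_F)$-action on $\Cl(\bbO_4)$ free and yields $t(\bbO_8)=r(\bbO_4)=h(\bbO_4)/h(F)$. For $\bbO_{16}$, the local description $(\bbO_{16})_2=\Mat_2(A_2)$ has reduced norm $A_2$, while $(\bbO_{16})_\ell=(\bbO_1)_\ell$ is maximal with reduced norm $O_{F_\ell}$ for $\ell\neq 2$; since $A=O_F$ away from $2$, this gives $\Nr_A(\bbO_{16})=A$, so $\bbO_{16}$ is stable under the canonical involution by \cite[Lemma~3.1.1]{xue-yang-yu:ECNF}. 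With $h(A)$ odd, Corollary~\ref{endo.4} again gives a free $\Pic(A)$-action and $t(\bbO_{16})=r(\bbO_{16})=h(\bbO_{16})/h(A)$.

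The main obstacle is the genus-theoretic input for the first equality: unlike the $p\equiv 1\pmod 4$ case, $h(F)$ may be even when $p\equiv 3\pmod 4$, so freeness cannot be deduced from Corollary~\ref{endo.4}, and one genuinely needs the triviality of $\grC_2(F)$ through the coincidence $\ker\theta_{O_F}=\Pic^+(O_F)[2]$. Everything else is bookkeeping: matching $t$ with the orbit numbers via \eqref{eq:3.5}, the elementary check that $\bbO_4$ and $\bbO_{16}$ are involution-stable, and the oddness of $h(A)$.
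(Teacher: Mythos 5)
Your proof is correct. For the second and third equalities it is essentially the paper's own argument: the paper likewise deduces freeness of the $\Pic(O_F)$-action on $\Cl(\bbO_4)$ and of the $\Pic(A)$-action on $\Cl(\bbO_{16})$ from Corollary~\ref{endo.4} (stability under the canonical involution, i.e.\ $\wt A=A$, together with oddness of the class number of the center) and then concludes via \eqref{eq:3.5}; your explicit checks that $\Nr_A(\bbO_{16})=A$ and that $h(A)$ is odd are exactly the computations the paper disposes of by citation. Where you genuinely diverge is the first equality, and the divergence is driven by a false premise: you assert that $h(F)$ may be even when $p\equiv 3\pmod 4$, whereas in fact $h(\Q(\sqrt{p}))$ is odd for \emph{every} prime $p$ --- this is precisely the result (Corollary (18.4) of Conner--Hurrelbrink) that the paper quotes, which lets it handle $\bbO_1$ by the same Corollary~\ref{endo.4} argument as the other two orders. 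Your detour around this nonexistent obstacle is nevertheless valid: the genus-theoretic computation --- the $2$-rank of $\Pic^+(O_F)$ is $\nu-1$; for $p\equiv 3\pmod 4$ the fundamental unit has norm $+1$, so $\ker\theta_{O_F}\cong\zmod{2}$; since $\ker\theta_{O_F}\subseteq\Pic^+(O_F)[2]\cong\zmod{2}$, the two groups coincide and $\grC_2(F)=\{1\}$ --- is correct, and Corollary~\ref{2.7} then yields $t(\bbO_1)=h(\bbO_1)/h(F)$. What your route buys is independence from the cited oddness theorem: triviality of $\grC_2(F)$ is strictly weaker than oddness of $h(F)$ (the paper's example $F=\Q(\sqrt{34})$ shows the gap), and you establish it from scratch by elementary genus theory. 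What it costs is length and a misleading framing of the ``main obstacle'': with the correct classical input, all three equalities follow uniformly from Corollary~\ref{endo.4} and \eqref{eq:3.5}, exactly as in your $p\equiv 1\pmod 4$ argument.
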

\begin{proof}
It is known that $h(F)$ is odd for all prime $p$ 
(see \cite[Corollary (18.4), p.~134]{Conner-Hurrelbrink}). By
Corollary~\ref{endo.4}, $\Pic(O_F)$ acts freely on $\Cl(\bbO_1)$ and
$\Cl(\bbO_4)$. For $r=16$, we have $\wt A=A$. On the other hand, one
easily computes 
that $h(A)=h(F)$ if $p\equiv 1 \pmod 8$, and $h(A)=3h(F)/\varpi$ if
$p\equiv 5 \pmod 8$, where $\varpi:=[O_F^\times:A^\times]$.  
Particularly, $h(A)$ is odd. Then the action of $\Pic(A)$ on
$\Cl(\bbO_{16})$ is also free by Corollary~\ref{endo.4} again.
The proposition then follows from (\ref{eq:3.5}).    
\end{proof}

\def\Mass{{\rm Mass}}
\def\Emb{{\rm Emb}}

For any square free integer $d\in \bbZ$, we write $h(d)$ for the class
number $h(\Q(\sqrt{d}))$. The discriminant of $F/\Q$ is denoted by $\grd_F$. We also set $K_j:=F(\sqrt{-j})=\Q(\sqrt{p},\sqrt{-j})$ for $j\in \{1,2,3\}$. 


\begin{lemma}\label{3.2}
  Assume that $p\equiv 1\pmod 4$. Then  
  \begin{equation}
  \label{eq:3.6}
  h(\bbO_4)=
  \begin{cases}
    \frac{9}{2} \zeta_F(-1)h(F)+\frac{1}{4} h(K_1) & \text{for
    $p\equiv 1\pmod 8$;} \\ 
    \frac{5}{2} \zeta_F(-1)h(F)+\frac{1}{4} h(K_1)+\frac{2}{3} h(K_3)
    & \text{for $p\equiv 5 
    \pmod 8$.}  
  \end{cases}
\end{equation}
In particular, $h(\bbO_4)=1$ if $p=5$.   
\end{lemma}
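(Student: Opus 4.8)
The plan is to treat $\bbO_4$ as what it is: a totally definite Eichler $O_F$-order of level $2O_F$ in the everywhere-unramified quaternion algebra $D$, and to apply the Eichler class number formula (the mass formula together with the optimal-embedding correction terms, as in \cite[Ch.~V]{vigneras}; compare the trace-formula approach of \cite{Pizer1973}). Because $\bbO_4$ is an $O_F$-order it is stable under the canonical involution, so every contributing CM order $B$ has $\delta(B)=1$ and the correction term takes the clean shape $\frac12\sum_{\dbr B}\bigl(1-\tfrac1{w(B)}\bigr)h(B)\prod_\ell m_\ell(B)$, the sum running over isomorphism classes of CM $O_F$-orders $B$ with $w(B)>1$ admitting an optimal embedding into $\bbO_4$. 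Thus the computation splits into two independent pieces: the mass $\Mass(\bbO_4)$, and the identification of the finitely many CM orders $B$ together with their invariants $w(B)$, $h(B)$ and $\prod_\ell m_\ell(B)$.

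For the mass I would start from $\Mass(\bbO_1)=\tfrac12\zeta_F(-1)h(F)$ for a maximal order and multiply by the Eichler level factor $\prod_{\grp\mid 2O_F}(\Nm\grp+1)$. The only input is the splitting type of $2$ in $F=\Q(\sqrt p)$: for $p\equiv1\pmod 8$ the prime $2$ splits, $2O_F=\grp\grp'$ with $\Nm\grp=\Nm\grp'=2$, so the factor is $3\cdot 3=9$ and $\Mass(\bbO_4)=\tfrac92\zeta_F(-1)h(F)$; for $p\equiv5\pmod 8$ it is inert, $\Nm(2O_F)=4$, the factor is $5$, and $\Mass(\bbO_4)=\tfrac52\zeta_F(-1)h(F)$. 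These are exactly the two leading terms in the statement.

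The correction term requires locating the CM $O_F$-orders with $w(B)>1$. Since $D$ is unramified at all finite places, every CM extension $K/F$ embeds into $D$, so the real constraint is local at the level, i.e.\ through $\prod_\ell m_\ell(B)$; and $w(B)>1$ forces $B$ to contain a root of unity outside $\{\pm1\}$, which over $F=\Q(\sqrt p)$ leaves only the fields $K_1=F(i)$ and $K_3=F(\zeta_6)=F(\sqrt{-3})$ (there is no $\zeta_8$ since $\sqrt2\notin F$). One checks $w(O_{K_1})=2$ and $w(O_{K_3})=3$, whence the weights $\tfrac12(1-\tfrac12)=\tfrac14$ and $\tfrac12(1-\tfrac13)=\tfrac13$. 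It remains to evaluate $\prod_\ell m_\ell$, which is $1$ away from $2$ and at the primes above $2$ is the Eichler optimal-embedding number governed by how those primes behave in $K_j/F$. A $2$-adic analysis gives: $2$ ramifies in $K_1/F$ in both cases, so $\prod_\ell m_\ell(O_{K_1})=1$ and the $K_1$-contribution is $\tfrac14 h(K_1)$ throughout; whereas in $K_3/F$ the prime(s) above $2$ are inert when $p\equiv1\pmod 8$ (so $\prod_\ell m_\ell(O_{K_3})=0$ and the $K_3$ term drops out), but split when $p\equiv5\pmod 8$ (so $\prod_\ell m_\ell(O_{K_3})=2$, giving $\tfrac13\cdot2\cdot h(K_3)=\tfrac23 h(K_3)$). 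Combining the mass with these contributions yields the two displayed formulas.

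The main obstacle I anticipate is precisely this local bookkeeping at $2$: computing $\prod_\ell m_\ell(O_{K_j})$ for the Eichler order of level $2O_F$ forces the careful case division ($2$ split versus inert in $F$, and then inert/split/ramified in $K_j/F$) that produces the dichotomy in the statement; one must also confirm that no proper sub-order of $O_{K_j}$ and no other CM field contributes, and that the Hasse unit indices are trivial so that $w(O_{K_1})=2$ and $w(O_{K_3})=3$ hold on the nose. As a consistency check I would specialize to $p=5$ (so $p\equiv5\pmod8$, $\zeta_F(-1)=\tfrac1{30}$, $h(F)=1$, and $h(K_1)=h(K_3)=1$): the formula then gives $\tfrac52\cdot\tfrac1{30}+\tfrac14+\tfrac23=\tfrac1{12}+\tfrac14+\tfrac23=1$, confirming $h(\bbO_4)=1$.
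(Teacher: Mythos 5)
You take the same route as the paper: its proof of this lemma likewise treats $\bbO_4$ as a totally definite Eichler order of square-free level $2O_F$ and applies Eichler's class number formula (Vign\'eras, Ch.~V). Your mass terms ($\frac{9}{2}$ resp.\ $\frac{5}{2}$ times $\zeta_F(-1)h(F)$ according as $2$ splits or is inert in $F$) and your local embedding numbers ($m_2(O_{K_1})=1$ because $2$ ramifies in $K_1/F$, and $m_2(O_{K_3})=1-\Lsymb{2}{p}$, i.e.\ $0$ or $2$ according as the primes above $2$ are inert or split in $K_3/F$) agree exactly with the values the paper records, so for $p>5$ your derivation is correct.

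The genuine gap is at $p=5$, a case the lemma explicitly includes (and for which it asserts $h(\bbO_4)=1$). Your enumeration of CM orders with $w(B)>1$ --- ``only $K_1$ and $K_3$'' --- fails there: $\Q(\zeta_5)=\Q(\zeta_{10})$ is a CM quadratic extension of $F=\Q(\sqrt{5})$, and $B=\Z[\zeta_{10}]=O_{\Q(\zeta_5)}$ has $w(B)=5$, so it enters Eichler's formula with weight $\frac{1}{2}\left(1-\frac{1}{5}\right)h(B)\,m_2(B)$. Your consistency check at $p=5$ silently assumes this term vanishes; it does, but only because $2$ is inert in $\Q(\zeta_5)/F$ (the residue degree of $2$ in $\Q(\zeta_5)/\Q$ is $4$), whence $m_2(\Z[\zeta_{10}])=0$ for the level-$2O_F$ Eichler order --- this is precisely the extra computation the paper makes a point of recording, and without it the formula for $p\equiv 5\pmod 8$ is not proved at $p=5$. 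Two smaller remarks: the elliptic weights $\frac{1}{2}\left(1-\frac{1}{w(B)}\right)$ are simply part of Eichler's formula and have nothing to do with the paper's $\delta(B)$, which belongs to the orbit-number formula for $r(\calO)$ (a different statement), so your appeal to involution-stability is a red herring; and your claim that $w(B)>1$ forces a root of unity beyond $\pm 1$ requires ruling out type~II CM extensions (Hasse unit index $2$), which holds here because the fundamental unit of $\Q(\sqrt{p})$, $p\equiv 1\pmod 4$, has norm $-1$, so that every totally positive unit of $O_F$ is a square --- you flag this as a check, but it is part of the proof rather than an optional verification.
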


The special value $\zeta_F(-1)$ of the Dedekind zeta-function
$\zeta_F(s)$ can be calculated by Siegel's formula \cite[Table~2, p.~70]{Zagier-1976-zeta}. It is also known that $\zeta_F(-1)=\frac{1}{24}B_{2,\chi}$, where $B_{2,\chi}$ is the generalized Bernoulli number attached to the primitive quadratic Dirichlet character (of conductor $\grd_F$) associated to the quadratic field $F$.  This can be obtained, for example, by combining \cite[formula (16), p.~65]{Zagier-1976-zeta} together with \cite[Exercise~4.2(a)]{Washington-cyclotomic}.

\begin{proof}
  In this case $\bbO_4$ is an Eichler order of square-free level
  $2O_F$. Let $m_2(B)$ be as in (\ref{eq:35}), with $\ell=2$ and
  $\calO=\bbO_4$.  Using the formula \cite[p.~94]{vigneras}(cf. \cite[Section~3.4]{xue-yang-yu:ECNF}), 
  we compute that $m_2(O_{K_1})=1$ and 
  $m_2(O_{K_3})=1-\Lsymb{2}{p}$, where $\Lsymb{2}{p}$ is the Legendre
  symbol. Let $\zeta_n$ be a primitive $n$-th root of unity for each
  $n>1$. When $p=5$, we also need to compute
  $m_2(\Z[\zeta_{10}])$, which is zero. Using Eichler's class number
  formula (see \cite[Chapter V]{vigneras}) and the data
  \cite[Sect.~2.8 and Prop.~3.1]{xue-yang-yu:num_inv}, 
  we obtain a formula for $h(\bbO_4)$. One can also apply Vign\'eras's  formula
  \cite[Theorem 3.1]{vigneras:ens} to the Eichler order
  $\bbO_4$. 
\end{proof}

\begin{proof}[Proof of Theorem~\ref{1.2}]
By \cite[Theorem 3.1]{vigneras:ens} 
(cf.~\cite[Proposition~6.2.10]{xue-yang-yu:ECNF}), we have 
$h(\bbO_1)=1, 2, 1$ for $p=2,3, 5$, respectively.
Since $h(F)=h(p)=1$ for $p=2,3$ and $5$, by Proposition~\ref{3.1} we obtain 
\begin{equation}
  \label{eq:tO1}
  t(\bbO_1)= 1,2,1 \quad \text{
  for $p=2,3,5$, respectively.}
\end{equation}
By Lemma~\ref{3.2} and \cite[Proposition~6.2.10]{xue-yang-yu:ECNF}, 
we have $h(\bbO_4)=h(\bbO_{16})=1$ for $p=5$. Therefore,
\begin{equation}
  \label{eq:O8-16}
 t(\bbO_8)=t(\bbO_{16})=1 \quad \text{for $p=5$}. 
\end{equation}

Assume that $p\equiv 3 \pmod 4$ and $p\ge 7$. By \cite[(6.17)]{xue-yang-yu:ECNF} and Proposition~\ref{3.1}, we have 
\begin{equation}
  \label{eq:type-O1-p3mod4}
  t(\bbO_1)= \frac{\zeta_F(-1)}{2} +
     \left(13-5\left(\frac{2}{p}\right)
    \right)\frac{h(-p)}{8}+\frac{h(-2p)}{4}+\frac{h(-3p)}{6}. 
\end{equation}

Assume $p\equiv 1 \pmod 4$ and $p\ge 7$. By \cite[(6.16)]{xue-yang-yu:ECNF} and Proposition~\ref{3.1}, we obtain
  \begin{equation}
    \label{eq:type-O1-p1mod4}
    t(\bbO_1)=\frac{\zeta_F(-1)}{2} +
     \frac{h(-p)}{8}+\frac{h(-3p)}{6}.      
  \end{equation} 
By Proposition~\ref{3.1} and Lemma~\ref{3.2}, 
we obtain
\begin{equation}
  \label{eq:tO8}
  t(\bbO_8)=
  \begin{cases}
    \frac{9}{2}\zeta_F(-1)+h(-p)/8 & \text{for $p\equiv 1 \pmod 8$;} \\
    \frac{5}{2}\zeta_F(-1)+h(-p)/8+h(-3p)/3  & \text{for $p\equiv 5
    \pmod 8$.} 
  \end{cases}
\end{equation}
Here we use the result of Herglotz
  \cite{MR1544516} (see also \cite[Chapter~3]{vigneras:ens} and
  \cite[Subsection~2.10]{xue-yang-yu:num_inv}) to factor out $h(F)$ from $h(K_1)$ and $h(K_3)$.  
By the formulas for 
$h(\bbO_{16})$ 
\cite[Section~6.2.4]{xue-yang-yu:ECNF} and Proposition~\ref{3.1},
we get 
\begin{equation}
  \label{eq:tO16}
  t(\bbO_{16})=  
    \begin{cases}
    3 \zeta_F(-1)+h(-p)/4+h(-3p)/2 & \text{for $p\equiv 1 \pmod 8$;} \\
    5 \zeta_F(-1)+h(-p)/4+h(-3p)/6  & \text{for $p\equiv 5 \pmod 8$.}
  \end{cases}
\end{equation}

Theorem~\ref{1.2} follows from (\ref{eq:3.4}) and (\ref{eq:tO1}) --
(\ref{eq:tO16}).
\end{proof}

\begin{remark}\label{3.3}
  (1) Comparing our formulas for $h(\bbO_8)$ and $h(\bbO_4)$, we see
  that $h(\bbO_4)=h(\bbO_8)$ if $p\equiv 1\pmod 8$, or if $p\equiv
  5\pmod 8$ and $\varpi=[O_F^\times:A^\times]=3$. In both
  cases,  we have $h(A)=h(F)$, and hence
  \begin{equation}
    \label{eq:3.13}
    \begin{split}
    T(p)&=\frac{h(\bbO_1)}{h(F)}+\frac{h(\bbO_4)}{h(F)}+
   \frac{h(\bbO_{16})} {h(A)} \\
    &=\frac{h(\bbO_1)}{h(F)}+\frac{h(\bbO_8)}{h(F)}+
   \frac{h(\bbO_{16})}{h(F)}=\frac{H(p)}{h(F)}. \\
    \end{split}    
  \end{equation}


  (2) Note that $\bbO_4$ is also defined for $p\not \equiv 1\pmod 4$, 
   as an Eichler order of (non-square-free) level $2O_F$. We 
   calculate $h(\bbO_4)$ and obtain that $h(\bbO_4)=1,2$ for $p=2,3$,
   and for $p\ge 7$ and $p\equiv 3 \pmod 4$, 
   \begin{equation}
     \label{eq:3.14}
     h(\bbO_4)=3 \zeta_F(-1) h(F)+\left (15-
     3\left(\frac{2}{p}\right)\right)\frac{h(K_1)}{4}.  
   \end{equation}
   More work is needed for computing the numbers of local optimal
   embeddings at $2$. We omit the details as
   (\ref{eq:3.14}) is not used in the present paper.  
\end{remark}


Assume that $p\equiv 1\pmod{4}$. Let us calculate $r(\bbO_8)$, the number of orbits for the
$\Pic(A)$-action on $\Cl(\bbO_8)$. As mentioned above, if either
$p\equiv 5\pmod{8}$ and $\varpi=3$ or $p\equiv 1\pmod{8}$, then we
have a natural isomorphism $\Pic(A)\simeq \Pic(O_F)$. Since the class
number of $F$ is odd, $\Pic(A)$ acts freely on $\Cl(\bbO_8)$ in this
case by
Lemma~\ref{endo.2}. Combining with part (1) of Remark~\ref{3.3}, we obtain
\begin{equation}
  \label{eq:38}
  r(\bbO_8)=\frac{h(\bbO_4)}{h(F)}=t(\bbO_8).  
\end{equation}
Surprisingly, this holds for the case that $p\equiv 5\pmod{8}$ and
$\varpi=1$ as well (to be proved in Subsection~\ref{sect:orbit-O8}). 
But before that, we relax the condition on $p$
a little bit and study a more general real quadratic field
$F=\Q(\sqrt{d})$.

\begin{ex}\label{ex:ker-cyclic-order-3}
  Let $d\in \bbN$ be a square free positive integer congruent to $5$
  modulo $8$. Whether the fundamental unit $\varepsilon$ of
  $F=\Q(\sqrt{d})$ lies in $A=\Z[\sqrt{d}]$ or not is a classical
  problem that dates back to Eisenstein \cite{Eisenstein1844}, and there 
  seems to be no simple criterion on $d$ for it. However, it is
  known \cite{Alperin, Stevenhagen} that the number of $d$ for each
  case is infinite. By
  \cite[Section~3]{Alperin}, the kernel of $i_{O_F/A}:\Pic(A)\to
  \Pic(O_F)$ is generated by the locally principal $A$-ideal class $[\gra]$ represented by $\gra=4\Z +
  (1+\sqrt{d})\Z$. Moreover, $\gra^3=8A$, and $\gra$ is
  non-principal if and only if $\varepsilon \in A$. 


  Let $K=\Q(\sqrt{d}, \sqrt{-3})$, and
  $B_{3,2}=A[\sqrt{-3},\frac{1+\sqrt{d}}{2}\zeta_6]$ with
  $\zeta_6=(1+\sqrt{-3})/2$. One calculates that
  \begin{equation}
    \label{eq:42}
B_{3,2}=\Z+\Z\sqrt{d}+\Z\frac{\sqrt{d}+\sqrt{-3}}{2}+\Z\frac{(1+\sqrt{d})(1+\sqrt{-3})}{4}\subset
  O_K. 
  \end{equation}
In particular, $B_{3,2}$ is a CM proper $A$-order with
$[O_K:B_{3,2}]=2$ and $\delta(B_{3,2})=0$. 
  We have $2O_K\subset B$, and
  $B/2O_K=\F_4\oplus \F_2\subset \F_4\oplus \F_4=O_K/2O_K$
  (cf.~\cite[Section~4.8]{xue-yang-yu:num_inv}).   Note that
  $\zeta_6\not\in B_{3,2}$, otherwise $B_{3,2}=O_K$ by
  \cite[Exercise~II.42(d), p.~51]{MR0457396}. Thus, we have 
\[3=[(O_K/2O_K)^\times: (B_{3,2}/2O_K)^\times]\geq [O_K^\times:
B_{3,2}^\times]\geq 3,\]
and hence $[O_K^\times:
B_{3,2}^\times]=3$.  It follows from \cite[Theorem~I.12.12]{Neukirch-ANT} that $h(B_{3,2})=h(O_K)$, and the canonical map
$\Pic(B_{3,2})\to \Pic(O_K)$ is an isomorphism. Therefore,
$\ker(i_{B_{3,2}/A})=\ker(i_{O_K/A})\supseteq \ker(i_{O_F/A})=\dangle{[\gra]}$.

Now assume that $3\nmid d$, and  $\varepsilon\in A$ so that $\gra$ is
non-principal. Then $K/F$ is ramified at every place
of $F$ above
  $3$. Hence  $O_K^\times =O_F^\times
  \bmu(K)$ and 
  $i_{K/F}: \Pic(O_F)\to \Pic(O_K)$ is an embedding by 
  \cite[Lemma~13.5]{Conner-Hurrelbrink}. Therefore, we have
\[\ker(i_{B_{3,2}/A})=\dangle{[\gra]}\simeq \zmod{3}.  \]  
It is known \cite[Theorem~4.1]{Alperin} that there are infinitely many
square free $d$ of the form
  $4n^2+1$ with an odd $n>3$ such that $\varepsilon\in A$. In particular, there are
  infinitely many $d$ satisfying our assumptions.
\end{ex}

\begin{sect}\label{sect:orbit-O8}
We return to the case that $F=\Q(\sqrt{p})$ with $p\equiv 5\pmod{8}$. 
Assume that $\varepsilon\in A=\Z[\sqrt{p}]$ so that
$\ker(i_{O_F/A})=\dangle{[\gra]}\simeq \zmod{3}$, with
$\gra=4\Z+(1+\sqrt{p})\Z$ as in
Example~\ref{ex:ker-cyclic-order-3}. In order to compute $r(\bbO_8)$, we list
all CM proper $A$-orders $B$ with $\ker(i_{B/A})$ nontrivial. Let $K$
be the fractional field of $B$. Since $h(F)$ is odd,  
the morphism  $i_{K/F}:
\Pic(O_F)\to \Pic(O_K)$ is injective, and hence 
$\ker(i_{B/A})\subseteq \ker(i_{O_F/A})$.  By the proof of
Proposition~\ref{prop:finiteness-CM-proper-A-order},  necessarily
$[O_K^\times:O_F^\times]>1$, and there exists a unit $u\in O_K^\times$
such that $u\not\in O_F^\times$ and $u\gra/2\subseteq B$. According to
\cite[Subsection~2.8]{xue-yang-yu:num_inv}, $K$ coincides with either
$F(\sqrt{-1})$ or $F(\sqrt{-3})$, and $O_K^\times=O_F^\times\bmu(K)$
in both cases. By the assumption, $O_F^\times=A^\times$. 

First,
suppose that $K=F(\sqrt{-1})$. Then $O_K^\times/A^\times$ is a cyclic
group of order $2$ generated by the coset of $\sqrt{-1}$. Note that 
$A[\gra\sqrt{-1}/2]\supseteq O_F$, because
\[\left(\frac{(1+\sqrt{p})\sqrt{-1}}{2}\right)^2=-\left(\frac{p-1}{4}+\frac{1+\sqrt{p}}{2}\right).\]
Hence there is no CM \emph{proper} $A$-order $B$ in $F(\sqrt{-1})$ with
$\ker(i_{B/A})$ nontrivial. 

Second, suppose that $K=F(\sqrt{-3})$. Then $O_K^\times/A^\times$ is a cyclic
group of order $3$ generated by the coset of $\zeta_6$. We have $A[\gra\zeta_6/2]=B_{3,2}$ in
(\ref{eq:42}), and $A[\gra\zeta_6^{-1}/2]=\bar B_{3,2}$, the complex
conjugate of $B_{3,2}$.

Therefore, up to isomorphism, $B=B_{3,2}$ is the
unique CM proper $A$-order with $\ker(i_{B/A})$ nontrivial. The same
proof as that in \cite[Subsection~6.2.5]{xue-yang-yu:ECNF} shows that
$m_2(B_{3,2})=1$ for $\calO=\bbO_8$. It follows
from Theorem~\ref{thm:orbit-num-formula} that 
\begin{equation}
  \label{eq:43}
  r(\bbO_8)=\frac{1}{3h(F)}(h(\bbO_8)+2h(B_{3,2}))=\frac{5}{2}\zeta_F(-1)+\frac{1}{8}h(-p)+\frac{1}{3}h(-3p). 
\end{equation}
Comparing with (\ref{eq:tO8}), we find that (\ref{eq:38}) holds in the
current setting as well. 
\end{sect}






\section{Integrality of $h(D)/h(F)$ when $\grd(D)=O_F$}
\label{sec:integrality}

Let $F$ be a totally real number field, and $D$ be a totally definite
quaternion algebra over $F$. The goal of this section is two-fold:
first, we characterize all the CM $O_F$-orders $B$ for which the
kernel of $i_{B/O_F}:\Pic(O_F)\to \Pic(B)$ is nontrivial; second, we
show that $h(D)/h(F)$ is integral when $[F:\Q]$ is even and $D$ is unramified at all finite
places of $F$ (i.e. the reduced discriminant $\grd(D)=O_F$). 

For simplicity, we identify $F$ with a subfield of $\C$, and consider
CM extensions of $F$ as subfields of $\C$ as well. This way two CM $O_F$-orders
are isomorphic if and only if they are the same. The set $\scrB$ defined in (\ref{eq:41}) with $A=O_F$ becomes to  
\[\scrB=\{B\mid B \text{ is a CM $O_F$-order, and }
\ker(i_{B/O_F})\neq \{1\}\}.
\]
By Lemma~\ref{lem:kernel-2-torsion}, $\ker(i_{B/O_F})\simeq \zmod{2}$
for every $B\in \scrB$.  In particular, if $h(F)$ is
odd, then $\scrB=\emptyset$ and $h(D)/h(F)$ is an integer by Corollary~\ref{2.7}. So we shall focus on the
case where $h(F)$ is even. 
The \emph{finitely many} CM extensions $K/F$ with
$\ker(i_{K/F})$ nontrivial are classified in
\cite[Section~14]{Conner-Hurrelbrink}. It remains to characterize
all $O_F$-orders $B\subseteq O_K$ with $\ker(i_{B/O_F})=\ker(i_{K/F})$ for
each such $K$.




\begin{lem}\label{lem:suborder-nontrivial-kernel}
  Let $K/F$ be a CM-extension with $\ker(i_{K/F})\neq \{1\}$, and
  let $[\gra]\in \ker(i_{K/F})$ be the unique nontrivial ideal class so
  that $\gra O_K=\lambda O_K$ for some $\lambda\in K^\times$.
  \begin{enumerate}[(i)]
  \item Suppose that $K\neq F(\sqrt{-1})$. Then $[\gra]\in
    \ker(i_{B/O_F})$ if and only if $B$ contains the
    $O_F$-order $B_0:=O_F\oplus\grI$, where $\grI$ denotes the purely imaginary
  $O_F$-submodule $\{z\in O_K\mid
  \bar{z}=-z\}$ of $O_K$. 
\item Suppose that $K=F(\sqrt{-1})$. Let $n=\max\{ m \in \bbN\mid
  \Q(\zeta_{2^m})\subseteq K\}$, where $\zeta_{2^m}$ denotes a primitive
  $2^m$-th root of unity.  Put
  $\eta=\zeta_{2^n}$, and $\grI_\eta=\{z\in O_K\mid \bar{z}=\eta z\}$.
 Then $[\gra]\in
    \ker(i_{B/O_F})$ if and only if $B$ contains the
    $O_F$-order $B_0:=O_F\oplus\grI_\eta$. Moreover,
    $\dangle{\eta}\subseteq B_0^\times$, so $B_0$ does not depend on
    the choice of $\zeta_{2^n}$. 
  \end{enumerate}
\end{lem}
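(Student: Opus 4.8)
The plan is to translate the capitulation condition into a statement about a single purely imaginary $O_F$-module and then identify the minimal order. First I would record the basic facts. As in the construction preceding (\ref{eq:30}), fix a totally positive $\tau$ with $\gra^2=\tau O_F$ and put $\theta=\sqrt{-\tau}$; then $\gra O_K=\theta O_K$ and a short computation ($a\theta\in O_K\Rightarrow a\gra O_K\subseteq O_K\Rightarrow a\in\gra^{-1}O_K\cap F=\gra^{-1}$) shows $\grI=\gra^{-1}\theta$. In particular $\grI$ is invertible with $\grI^2=O_F$, it is generated at each prime by a unit of $O_K$, and $O_F\oplus\grI$ is an $O_F$-order (closed under multiplication since $\grI\cdot\grI=O_F$). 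Because $\lambda$ is only well defined up to $O_K^\times$, replacing it by $\theta$ changes $\gra/\lambda$ by a unit, which is absorbed into the quantifier in Lemma~\ref{lem:char-CM-order-nontrivial-i}; thus $[\gra]\in\ker(i_{B/O_F})$ iff $u\grI\subseteq B$ for some $u\in O_K^\times$. The forward implication is then immediate: if $B\supseteq B_0$ then $\grI\subseteq B$ in part~(i) (take $u=1$), and in part~(ii) a suitable root-of-unity multiple of $\grI$ lies in $B_0$.

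The substance is the reverse implication $u\grI\subseteq B\Rightarrow\grI\subseteq B$ of part~(i). Since every $O_F$-order is stable under the canonical involution ($\delta(B)=1$, cf.\ (\ref{eq:46})), we have $B=\bar B$, and one checks that $\grI B$ is then a \emph{principal} $B$-ideal $\grI B=\gamma B$ with $\gamma\in O_K^\times$; as $\grI^2=O_F$ this forces $\gamma^2\in B^\times$, and $\grI\subseteq B$ is equivalent to $\gamma\in B$. The crucial observation is that $v:=\gamma/\bar\gamma$ satisfies $v\bar v=1$, hence is a root of unity (a norm-one unit of a CM field is a root of unity), and $v=\gamma^2/\Nm_{K/F}(\gamma)\in B^\times$. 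I would then split on the order of $v$ in the cyclic group $\mu(K)$.

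Choose $\xi\in\mu(K)$ with $\xi^2$ equal to whichever of $v,-v$ is a square in $\mu(K)$; exactly one of them is, because $-1\notin\mu(K)^2$ precisely when $4\nmid|\mu(K)|$, i.e.\ when $K\neq F(\sqrt{-1})$. If $v$ has odd order $d$, then $\xi=v^{(d+1)/2}\in\langle v\rangle\subseteq O_F[v]\subseteq B$, and $\delta:=\gamma\xi^{-1}$ satisfies $\bar\delta=\delta$, so $\delta\in O_F^\times$ and $\gamma=\delta\xi\in B$, giving $\grI\subseteq B$ and hence $B\supseteq O_F\oplus\grI=B_0$. If instead $v$ has even order, then $-v$ has odd order, again $\xi\in\langle -v\rangle\subseteq B$, but now $\delta=\gamma\xi^{-1}$ is purely imaginary; then $\grI B=\delta B$ with $\delta\in\grI$, so $\delta^{-1}\grI\subseteq F\cap B=O_F$ forces $\grI=\delta O_F$, whence $\gra=\theta\delta^{-1}O_F$ is principal, contradicting the nontriviality of $[\gra]$. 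Thus only the odd-order case occurs. This is exactly the step that uses $K\neq F(\sqrt{-1})$: it is what guarantees that $-1$ is the unique element of order two, so that an even-order $v$ always has $-v$ of odd order.

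For part~(ii) one has $K=F(\sqrt{-1})$, i.e.\ $4\mid|\mu(K)|$, the clean square/non-square dichotomy above breaks, and the obstruction is concentrated in the $2$-primary part of $\mu(K)$. I would run the same argument while tracking this $2$-part: replacing $-1$ by the primitive $2^n$-th root $\eta=\zeta_{2^n}$, the correct minimal module becomes the $\eta$-eigenmodule $\grI_\eta=\{z\in O_K:\bar z=\eta z\}$, and one shows $u\grI\subseteq B$ forces $\grI_\eta\subseteq B$. The main obstacle I anticipate lies here: verifying that $B_0=O_F\oplus\grI_\eta$ is genuinely an order, that $\langle\eta\rangle\subseteq B_0^\times$ (so $B_0$ is independent of the choice of primitive $2^n$-th root), and that it is $\grI_\eta$, rather than $\grI$ itself, that is cut out by the optimal choice of $u$. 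Once these are in place the reverse implication follows as in part~(i), with $\eta$ playing the role previously played by $\pm1$.
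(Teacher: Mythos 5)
Your part (i) is essentially correct, and it takes a genuinely different route from the paper's: where the paper first reduces the unit $u$ to a root of unity using that $K/F$ is of type I and then runs an odd-order trick on $\xi$, you instead use Lemma~\ref{lem:eq-unit-ideal} to get $\grI B=\gamma B$ with $\gamma\in O_K^\times$, pass to the norm-one root of unity $v=\gamma/\bar\gamma\in B^\times$, and exploit the dichotomy that exactly one of $\pm v$ is a square in $\bmu(K)$ when $K\neq F(\sqrt{-1})$, the non-square branch forcing $\grI$ (hence $\gra$) to be principal. One small debt in your setup: the existence of a totally positive generator $\tau$ of $\gra^2$ with $\sqrt{-\tau}\in K$ is not given by the construction preceding \eqref{eq:30} (which goes from $\gra$ to a field, whereas here $K$ is prescribed); it needs the same square/non-square dichotomy applied to $-\bar\lambda/\lambda\in\bmu(K)$, or the citation to Conner--Hurrelbrink that the paper uses.

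The genuine gap is part (ii), which you leave open, and the route you sketch for it cannot work as stated, because its starting point is false when $K=F(\sqrt{-1})$. In that case $\gra O_K$ has \emph{no} purely imaginary generator at all: if $\gra O_K=\lambda O_K$ with $\bar\lambda=-\lambda$, then comparing $\lambda$ with the generator $1+\eta$ produces a unit $u$ with $\bar u/u=-\eta$, hence $-\eta\in\bmu(K)^2$; but $-\eta=\eta^{2^{n-1}+1}$ is an odd power of $\eta$, and since $\dangle{\eta}$ is the full $2$-primary part of $\bmu(K)$, odd powers of $\eta$ are not squares. So your $\theta=\sqrt{-\tau}$ does not lie in $K$, the identity $\grI=\gra/\theta$ is meaningless, and your criterion ``$[\gra]\in\ker(i_{B/O_F})$ iff $u\grI\subseteq B$ for some $u\in O_K^\times$'' is wrong in case (ii). Worse, that criterion is vacuous there: for $z\in\grI$ one has $\overline{\sqrt{-1}\,z}=\sqrt{-1}\,z$, so $\sqrt{-1}\cdot\grI\subseteq O_F$, and \emph{every} $O_F$-order $B\subseteq O_K$ satisfies $u\grI\subseteq B$ with $u=\sqrt{-1}$. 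Consequently the implication you propose to prove, ``$u\grI\subseteq B$ forces $\grI_\eta\subseteq B$,'' is false (take $B=O_F+\grf O_K$ with $\grf$ deep enough that $\grI_\eta\not\subseteq B$), and the forward direction of (ii) as you state it (``a root-of-unity multiple of $\grI$ lies in $B_0$'') proves nothing, since it holds for every order. The correct anchor in case (ii), as in the paper, is the module $\grI_\eta$ itself: by Conner--Hurrelbrink one has $\gra O_K=(1+\eta)O_K$ with $\gra^2=(2+\eta+\bar\eta)O_F$, and $\gra/(1+\eta)=\grI_\eta$, so capitulation in $B$ means $\xi\grI_\eta\subseteq B$ for some $\xi\in\bmu(K)$; the paper then needs the separate computation $\xi^2\bar\eta=\xi^2(2+\eta+\bar\eta)/(1+\eta)^2\in B$ and the factorization $\xi=\xi'\eta^r$ with $\ord(\xi')$ odd to force $\dangle{\eta}\subseteq B^\times$, $\xi\in B$, and finally $\grI_\eta\subseteq B$. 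Your $\grI$-based machinery from part (i) does not transplant, because $\grI$ is simply not the module that controls capitulation when $K=F(\sqrt{-1})$.
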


\begin{proof}
  By Lemma~\ref{lem:char-CM-order-nontrivial-i},
  $[\gra]\in \ker(i_{B/A})$ if and only if there exits
  $u\in O_K^\times$ such that 
  $u\gra/\lambda\subset B$.  Let $\bmu(K)$
  be the group of roots of unity in $K$. Because $\ker(i_{K/F})$ is
  nontrivial, it follows from \cite[Section~13, p.~68]{Conner-Hurrelbrink}
  that the CM-extension $K/F$ is of type I,
  i.e. $O_K^\times=O_F^\times\bmu(K)$. Write $u=v\xi$ with
  $v\in O_F^\times$ and $\xi\in \bmu(K)$. Then
  $\xi\gra/\lambda=u\gra/\lambda\subset B$. Therefore,   $[\gra]\in \ker(i_{B/O_F})$ if and only if there exists a
  root of unity $\xi\in \bmu(K)$ such that
  $\xi\gra/\lambda\subset B$.


(i)  Now suppose  that $K\neq F(\sqrt{-1})$.   By
\cite[Section~14]{Conner-Hurrelbrink}, replacing $\lambda$ by $\lambda u'$ for some $u'\in O_K^\times$, we may assume 
  that $\bar{\lambda}=-\lambda$ and $\gra^2=-\lambda^2 O_F$. Clearly,
  $\gra/\lambda \subseteq\grI$, and
  $B_0:=O_F\oplus\grI$ is an $O_F$-suborder of
  $O_K$. By Lemma~\ref{lem:char-CM-order-nontrivial-i}, $[\gra]\in \ker(i_{B/O_F})$ for any $O_F$-order $B\supseteq B_0$.  
  
Now we prove the other direction. We first claim that $\gra/\lambda =\grI$.  
Since both sides are invertible
 $O_F$-modules, there exists a nonzero $O_F$-ideal $\grc\subseteq O_F$ such
 that $\gra/\lambda =\grc\grI$. Then
 $O_K=\frac{\gra}{\lambda}O_K=\grc\grI O_K\subseteq \grc O_K$. It
 follows that $\grc O_K=O_K$, and hence $\grc=O_F$ by
 Lemma~\ref{lem:eq-unit-ideal}. 

 Let $B$ be an $O_F$-suborder of $O_K$ such that
 $\xi\gra/\lambda\subseteq B$ for some $\xi\in \bmu(K)$. Replacing
 $\xi$ by $-\xi$ if necessary, we may assume that $\xi$ has odd
 order.  Then 
$\xi^2\in \xi^2O_F=(\xi\gra/\lambda)^2\subseteq B$, and hence
$\dangle{\xi}\subset B$. It follows that $\grI=\gra/\lambda\subseteq
B$, and hence $B\supseteq B_0$. 

(ii) Suppose that $K=F(\sqrt{-1})$. Note that both $\eta$ and
$\bar\eta$ belong to $B_0$ as 
$1+\bar\eta\in \grI_\eta$ and $\eta+\bar\eta\in O_F$. 
If both $x$ and $y$ are elements
of $\grI_\eta$, then  $xy\in O_F\bar\eta$ as $\ol{xy}=(xy \eta)\cdot \eta$ and $\overline{xy\eta}= xy\eta$. 
Thus $B_0$ is an $O_F$-suborder of
$O_K$, so $\dangle{\eta}\subset B_0^\times$.  By
\cite[Corollary~14.7]{Conner-Hurrelbrink} and the Remark on
\cite[p.~88]{Conner-Hurrelbrink}, the unique nontrivial ideal class in
$\ker(\Pic(O_F)\to \Pic(O_K))$ is represented by an $O_F$-ideal $\gra$ such
that $\gra^2=(2+\eta+\bar\eta)O_F$ and $\gra O_K=(1+\eta)O_K$. 
We have $\gra/(1+\eta)\subseteq \grI_\eta\subset B_0$.
By Lemma~\ref{lem:char-CM-order-nontrivial-i} again, $[\gra]\in \ker(i_{B/O_F})$ for any order $B\supseteq B_0$.  


Conversely, let $B\subseteq O_K$ be an $O_F$-suborder such that
 $\xi\gra/(1+\eta)\subseteq B$ for some $\xi\in \bmu(K)$. 
 The same proof as that in
(i) shows that $\gra/(1+\eta)=\grI_\eta$. Note that $2+\eta+\bar \eta\in \gra^2$.
 We have
 \begin{equation}
   \label{eq:2}
\xi^2\bar\eta=\frac{\xi^2(2+\eta+\bar\eta)}{(1+\eta)^2}\in
\left(\frac{\xi\gra}{1+\eta}\right)^2\subseteq B.
 \end{equation}
Write
$\xi=\xi'\eta^r$ such that $s:=\ord(\xi')$ is odd. Then 
\[\dangle{\eta}=\dangle{\eta^{s(2r-1)}}=\dangle{(\xi^2\bar{\eta})^s}\subseteq
\dangle{\xi^2\bar\eta}\subseteq B^\times.\]
It follows from (\ref{eq:2}) that $\xi'^2\in B$, and hence $\xi'\in B$ as
well. We conclude that $\xi\in B$, and 
$\grI_\eta=\gra/(1+\eta)\subseteq B$. This completes the proof of the lemma.
\end{proof}

\begin{cor}\label{cor:conductor}
  Keep the notation and assumptions of
  Lemma~\ref{lem:suborder-nontrivial-kernel}. The
  index $[O_K^\times: B^\times]$ is odd for any $O_F$-order
  $B\subseteq O_K$ with $[\gra]\in \ker(i_{B/O_F})$. If $K\neq
  F(\sqrt{-1})$, then $2O_K\subseteq B$; if $K=F(\sqrt{-1})$, then
  $(2-\eta-\bar\eta)O_K\subseteq B$. 
\end{cor}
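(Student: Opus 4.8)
The plan is to reduce the entire statement to the minimal order $B_0$ furnished by Lemma~\ref{lem:suborder-nontrivial-kernel}, which characterizes the orders $B\subseteq O_K$ with $[\gra]\in\ker(i_{B/O_F})$ as precisely those containing $B_0$. For the two inclusion claims this reduction is automatic, since $2O_K\subseteq B_0\subseteq B$ (resp.\ $(2-\eta-\bar\eta)O_K\subseteq B_0\subseteq B$). For the unit index, the chain $O_F^\times\subseteq B_0^\times\subseteq B^\times\subseteq O_K^\times$ yields $[O_K^\times:B^\times]\mid[O_K^\times:B_0^\times]$, so a divisor of an odd number is odd and it suffices to prove $[O_K^\times:B_0^\times]$ is odd.

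First I would establish the inclusions via the eigenspace decomposition under the nontrivial automorphism $x\mapsto\bar x$ of $K/F$. When $K\neq F(\sqrt{-1})$ and $w\in O_K$, writing $2w=(w+\bar w)+(w-\bar w)$ exhibits the reduced trace $w+\bar w\in O_K\cap F=O_F$ and the purely imaginary element $w-\bar w\in\grI$, so $2w\in O_F\oplus\grI=B_0$. When $K=F(\sqrt{-1})$, the key device is the factorization $2-\eta-\bar\eta=(1-\eta)(1-\bar\eta)=\Nm_{K/F}(1-\eta)$ together with the $\eta$-twisted splitting $w=a+z$, where $a=(\bar w-\eta w)/(1-\eta)\in F$ and $z=(w-\bar w)/(1-\eta)$ lies in the $\eta$-eigenspace. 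Multiplying by $2-\eta-\bar\eta$ clears the denominator $1-\eta$: one gets $(2-\eta-\bar\eta)a=(1-\bar\eta)(\bar w-\eta w)\in O_K\cap F=O_F$ and $(2-\eta-\bar\eta)z=(1-\bar\eta)(w-\bar w)\in\grI_\eta$, the latter because a direct check gives $\overline{(1-\bar\eta)(w-\bar w)}=\eta(1-\bar\eta)(w-\bar w)$. Hence $(2-\eta-\bar\eta)w\in O_F\oplus\grI_\eta=B_0$.

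For the oddness of $[O_K^\times:B_0^\times]$ I would invoke the type~I property $O_K^\times=O_F^\times\bmu(K)$ proved inside Lemma~\ref{lem:suborder-nontrivial-kernel}. Since $F$ is totally real, $\bmu(K)\cap O_F^\times=\{\pm1\}$, so $O_K^\times/O_F^\times\cong\bmu(K)/\{\pm1\}$ is cyclic of order $\abs{\bmu(K)}/2$ and $[O_K^\times:B_0^\times]=[O_K^\times/O_F^\times:B_0^\times/O_F^\times]$. If $K\neq F(\sqrt{-1})$, then $\bmu(K)$ has $2$-part exactly $\{\pm1\}$ (an element of order $4$ would be $\pm\sqrt{-1}$, forcing $F(\sqrt{-1})\subseteq K$ and hence equality), so $\abs{\bmu(K)}/2$ is odd and the index is odd for trivial reasons. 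If $K=F(\sqrt{-1})$, then $\dangle{\eta}\subseteq B_0^\times$ by the lemma, and since $\eta=\zeta_{2^n}$ generates the $2$-Sylow subgroup of $\bmu(K)$, its class generates the $2$-Sylow subgroup of $O_K^\times/O_F^\times$; thus $B_0^\times/O_F^\times$ contains the entire even part of this cyclic group and the index is again odd.

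The only genuinely delicate point, and the one I expect to require the most care, is the case $K=F(\sqrt{-1})$: the naive real-plus-imaginary splitting must be replaced by the $\eta$-eigenspace decomposition, and one must simultaneously verify that $2-\eta-\bar\eta$ is the correct norm factor clearing all denominators and that $\eta$ already supplies the full $2$-primary part of $O_K^\times/O_F^\times$ so that no even factor survives in the index. Everything else reduces to elementary manipulations with the involution and the cyclic structure of $\bmu(K)$, all drawn from the cited results of Conner--Hurrelbrink.
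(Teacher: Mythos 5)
Your proposal is correct and follows essentially the same route as the paper's proof: reduce everything to the minimal order $B_0$, prove the inclusions via the trace decomposition (resp.\ the $\eta$-twisted decomposition $\alpha=a+b$ with exactly the same formulas, with $2-\eta-\bar\eta=\Nm_{K/F}(1-\eta)$ clearing the denominator $1-\eta$), and deduce oddness of the unit index from the type~I property $O_K^\times=O_F^\times\bmu(K)$ together with the fact that the $2$-primary part of $\bmu(K)$ lies in $B_0^\times$. The only difference is expository: you spell out the cyclic-group bookkeeping for $\bmu(K)/\{\pm1\}$ that the paper compresses into one sentence.
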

\begin{proof}
  It is enough to prove the corollary for the case $B=B_0$. As pointed
  in the proof of Lemma~\ref{lem:suborder-nontrivial-kernel},
we have $O_K^\times=O_F^\times\bmu(K)$.  The 2-primary
subgroup of $\bmu(K)$ is contained in $B_0^\times$, so $[O_K^\times:
B_0^\times]$ is odd. Clearly, $2O_K\subseteq B_0$ if $K\neq
F(\sqrt{-1})$. Suppose that $K=F(\sqrt{-1})$. For any $\alpha\in O_K$,
write $\alpha=a+b$ with $a\in F$ and $b\in \grI_\eta\otimes_{O_F}
F\subseteq K$. We then have 
\begin{equation}
  \label{eq:1}
a=\frac{-\eta \alpha+\bar \alpha}{1-\eta},\qquad
b=\frac{\alpha-\bar\alpha}{1-\eta}. 
\end{equation}
Both $(2-\eta-\bar\eta)a$ and $(2-\eta-\bar\eta)b$ are
integral, and the corollary follows. 
\end{proof}


Now assume that $h(F)$ is even and consider the following set in (\ref{eq:44}):  
\begin{equation}\label{eq:45}
\scrB_\odd=\{B\in \scrB\mid h(B)/h(F) \text{ is odd}\}\subseteq \scrB.  
\end{equation}
Our goal is to show that $\abs{\scrB_\odd}$ is even. Clearly, if $B\in
\scrB_\odd$, then $O_K\in \scrB_\odd$ as well with $K$ being the fractional field of $B$. 
By a theorem of Kummer (cf.~\cite[Theorem 13.14]{Conner-Hurrelbrink}), 
if $h(F)$ is even and $h(K)/h(F)$ is odd, then $\ker (i_{K/F})$ is nontrivial. 
The CM extensions $K/F$ with odd relative class number
$h(K)/h(F)$ are classified in
\cite[Section~16]{Conner-Hurrelbrink}. We fix such a $K$ and
characterize all the $O_F$-suborders $B\subseteq O_K$ that lie in
$\scrB_\odd$.

\begin{prop}\label{prop:minimal-odd-rlCN-order}
  Let $K/F$ be a CM-extension with  $h(F)$ even and the relative class number
  $h(K)/h(F)$ \emph{odd}.  Denote by $\grf_\dia$ the product of all
  dyadic prime $O_F$-ideals that are unramified in $K$. 
An $O_F$-order $B\subseteq O_K$ has odd
  relative class number $h(B)/h(F)$ if and only if $B$ contains the
  $O_F$-order $B_\dia:=O_F+\grf_\dia O_K$. 
Moreover, $B_\dia=O_K$ if and
  only if both of the following conditions hold: 
  \begin{itemize}
  \item $F$ has a single dyadic prime, and 
  \item it is ramified in $K/F$. 
  \end{itemize}
\end{prop}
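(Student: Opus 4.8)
The plan is to reduce the parity of $h(B)/h(F)$ to a dyadic-local computation via the conductor class number formula. Let $\grf_B$ be the conductor of $B$ in $O_K$ and $Q:=[O_K^\times:B^\times]$. Applying \cite[Theorem~I.12.12]{Neukirch-ANT} to $B$ and using that $B\cap F=O_F$ is already maximal on the real side, one obtains
\[
\frac{h(B)}{h(F)}=\frac{h(O_K)}{h(F)}\cdot\frac{1}{Q}\cdot\bigl[(O_K/\grf_B)^\times:(B/\grf_B)^\times\bigr].
\]
Since $h(O_K)/h(F)=h(K)/h(F)$ is odd by hypothesis, and the whole quantity is an integer by \cite[Corollary~5.4]{Conner-Hurrelbrink}, the relative class number is odd precisely when $v_2(N)=v_2(Q)$, where $N:=\bigl[(O_K/\grf_B)^\times:(B/\grf_B)^\times\bigr]=\prod_{\grp}N_\grp$ decomposes into local unit indices $N_\grp$. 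I record at once that $\bmu(K)_{(2)}=\{\pm1\}\subseteq B^\times$ when $K\neq F(\sqrt{-1})$ (since $\mu_4\subseteq K$ forces $K=F(\sqrt{-1})$), so $v_2(Q)=0$ in that case; in general $v_2(Q)=\max_{\grp}j_\grp$, where $j_\grp$ records the largest power of $2$ by which the $2$-power roots of unity fail to lie in $B_\grp$.

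First I would dispose of the non-dyadic primes. By the classification of CM extensions with odd relative class number \cite[Sections~14 and~16]{Conner-Hurrelbrink}, together with $\#\ker(i_{K/F})=2$ (Lemma~\ref{lem:kernel-2-torsion} and Kummer's theorem \cite[Theorem~13.14]{Conner-Hurrelbrink}), one deduces that $K/F$ is unramified at every odd finite prime. At such a prime, whenever $B_\grp\subsetneq O_{K,\grp}$ the tame part of the local unit index contributes a factor $q+1$ (inert) or $q-1$ (split) with $q=\#(O_F/\grp)$ odd, so $v_2(N_\grp)\geq1$; a direct comparison sharpens this to the strict inequality $v_2(N_\grp)>j_\grp$. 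Combined with the local bounds $v_2(N_\grp)\geq j_\grp$ at the remaining primes (a local form of integrality) and $v_2(Q)=\max_\grp j_\grp$, the equality $v_2(N)=v_2(Q)$ forces $B$ to be maximal at every odd prime; in particular $\grf_B$ is supported on the dyadic primes.

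The arithmetic heart is the dyadic computation. For $\grp\mid2$ one checks directly that the minimal suborder $O_{F,\grp}+\grp O_{K,\grp}$ has $N_\grp=q+1$ or $q-1$ — hence odd — exactly when $\grp$ is \emph{unramified} in $K$, whereas at a \emph{ramified} dyadic prime, and at any order strictly below $O_{F,\grp}+\grp O_{K,\grp}$, the principal units contribute a factor $q=2^{f}$ and $N_\grp$ becomes even; thus $N_\grp$ is odd iff $B_\grp\supseteq(B_\dia)_\grp$. Now if $B\supseteq B_0$, then by Corollary~\ref{cor:conductor} one has $Q$ odd and $2O_K$ (resp.\ $(2-\eta-\bar\eta)O_K$) inside $B$, so $v_2(Q)=0$ and $\grf_B\mid 2O_K$; hence $h(B)/h(F)$ is odd iff every $N_\grp$ is odd iff $B\supseteq B_\dia$. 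For the ``only if'' it remains to treat $B\not\supseteq B_0$ (but maximal at odd primes): there $B_\grp\not\supseteq(B_0)_\grp$ at some dyadic $\grp$, and the same local computation yields $v_2(N_\grp)>j_\grp$, whence $v_2(N)>v_2(Q)$ and $h(B)/h(F)$ is even. Here I use that $B_0\subseteq B_\dia$ and that the two orders agree outside the dyadic primes, which follows from the eigenspace splitting $O_{K,\grp}=O_{F,\grp}\oplus\grI_\grp$ at odd primes (valid since $2$ is a unit) and from $2\in\grp$ at the dyadic primes.

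Finally, $B_\dia=O_K$ is equivalent to $\grf_\dia=O_F$, i.e.\ to there being no dyadic prime of $F$ unramified in $K$, i.e.\ to every dyadic prime ramifying in $K$. The nontrivial point in the ``moreover'' is that this forces $F$ to have a \emph{single} dyadic prime, and I expect this to be the main obstacle: ruling out two or more (all ramified) dyadic primes requires the genus-theoretic input of \cite[Section~16]{Conner-Hurrelbrink}, the oddness of $h(K)/h(F)$ bounding the number of finite ramified primes through the ambiguous class number formula. The technical crux throughout is that the global unit index $Q$ does not factor over primes, so passing from $v_2(Q)=\max_\grp j_\grp$ and $v_2(N)=\sum_\grp v_2(N_\grp)$ to the clean local criterion ``$B_\grp\supseteq(B_\dia)_\grp$ for all $\grp\mid2$'' depends essentially on importing from \cite{Conner-Hurrelbrink} the control of ramification and of roots of unity under the hypotheses.
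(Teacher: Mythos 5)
Your reduction to the parity of $N/Q$ (with $N$ the conductor-unit index and $Q=[O_K^\times:B^\times]$) is fine, and the skeleton ``$v_2(N_\grp)\geq j_\grp$ everywhere, strictly whenever $B_\grp\not\supseteq(B_\dia)_\grp$'' would indeed yield the proposition. The gap is that the two steps you dispatch with ``one checks directly'' and ``a direct comparison sharpens this to the strict inequality'' are exactly where the content of the statement lives, and they are \emph{not} purely local facts, so the direct check you envisage does not exist. Concretely, at odd primes: suppose $\zeta_8\in K$ (so $n\geq 3$ in the notation of Lemma~\ref{lem:suborder-nontrivial-kernel}(ii)), let $\grp$ be an odd prime of $F$ inert in $K$ with $q=\Nm_{F/\Q}(\grp)\equiv 3\pmod 8$, and let $B$ be maximal outside $\grp$ with $B_\grp=O_{F,\grp}+\grp O_{K,\grp}$. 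Then $N=N_\grp=q+1$ has $v_2(N)=2$, while among the $2$-power roots of unity only $\pm1$ lie in $B_\grp$ (as $4\nmid q-1$ and $8\nmid q-1$), so $j_\grp=2$ and (granting type I) $v_2(Q)=2$: your criterion declares $h(B)/h(F)$ odd although $B\not\supseteq B_\dia$, contradicting the proposition. What rules this out is invisible to any local computation at $\grp$: $\zeta_8\in K=F(\sqrt{-1})$ forces $\sqrt{2}\in F$, hence $\F_q\supseteq\F_\ell(\sqrt2)$ and $q\equiv\pm1\pmod 8$. In general your strict inequality at odd primes requires the global input $\Q(\zeta_{2^n})^{+}\subseteq F$, whence $q\equiv\pm1\pmod{2^n}$; and at dyadic primes it requires knowing at which level $m$ the root $\zeta_{2^n}$ enters the chain of orders $O_{F,\grp}+\grp^mO_{K,\grp}$ (equivalently, the valuation of the conductor of $B_0$), which is precisely the computation the paper carries out in \eqref{eq:5}--\eqref{eq:6} using \cite[63:3]{o-meara-quad-forms} and the description of $K$ from \cite{Conner-Hurrelbrink}. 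None of this appears in your proposal.

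A second, related gap: your forward direction rests on $B_0\subseteq B_\dia$, which you justify by ``$2\in\grp$ at the dyadic primes.'' That is not a proof. The containment says every dyadic prime unramified in $K$ divides the conductor $\grf_0$ of $B_0$, i.e.\ $\grf_\dia\mid\grf_0$; a priori one could have $\grf_0=O_F$, in which case $B_0=O_K\supsetneq B_\dia$ and Corollary~\ref{cor:conductor} gives you nothing about orders containing $B_\dia$. This divisibility is again exactly the paper's central claim, proved by \eqref{eq:5}--\eqref{eq:6}; your proposal assumes it. (Also, the identity $v_2(Q)=\max_\grp j_\grp$ silently uses that $K/F$ is of type I, $O_K^\times=O_F^\times\bmu(K)$; this does hold here, via Kummer's theorem and \cite[Section~13]{Conner-Hurrelbrink}, but it must be invoked, since otherwise $Q$ is not computed by roots of unity.) Note the structural contrast with the paper: there, a short capitulation argument — if $h(B)/h(K)$ is odd then $[\gra B]$ is a $2$-torsion element of the odd-order group $\ker(\Pic(B)\to\Pic(O_K))$, hence trivial, so $B\supseteq B_0$ by Lemma~\ref{lem:suborder-nontrivial-kernel} and Corollary~\ref{cor:conductor} applies — disposes of all primes at once, and the only local work left is the single claim $\grf_\dia\mid\grf_0$. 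Your prime-by-prime route must re-prove a strict inequality in every local configuration, and, as the example above shows, those inequalities can genuinely fail for local fields; only the global hypotheses exclude the failures.
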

\begin{proof}
Since $h(F)$ is assumed to be even and $h(K)/h(F)$ is odd, we know by
\cite[Section~16]{Conner-Hurrelbrink} that
the $2$-primary subgroup $\Pic(O_F)[2^\infty]$ of $\Pic(O_F)$ is cyclic, and 
\begin{equation}
  \label{eq:3}
\ker(i_{K/F}:\Pic(O_F)\to
  \Pic(O_K))=\Pic(O_F)[2]\simeq \zmod{2}. 
\end{equation}
Let $[\gra]\in \ker (i_{K/F})$ be the unique nontrivial ideal class as
in Lemma~\ref{lem:suborder-nontrivial-kernel}. Then $i_{B/O_F}([\gra])\in \ker(\Pic(B)\twoheadrightarrow
\Pic(O_K))$ for any $O_F$-order $B\subseteq O_K$. If
$h(B)/h(F)$ is odd, then $h(B)/h(K)$ is odd and  $[\gra]\in
\ker (i_{B/O_F})$, and hence $B\supseteq B_0$ by
Lemma~\ref{lem:suborder-nontrivial-kernel}. By Corollary~\ref{cor:conductor}, 
for such an order  $B$, its
conductor $\grf(B)\subseteq O_F$ is a product of
dyadic primes of $O_F$, and $[O_K^\times:B^\times]$ is odd.  


The class number of an $O_F$-order $B\subseteq O_K$ is given by \cite[p.~75]{vigneras:ens}
\begin{equation}
  \label{eq:4}
  h(B)=\frac{h(K)\Nm_{F/\Q}(\grf(B))}{[O_K^\times:
    B^\times]}\prod_{\grp\mid \grf(B)}\left(1- \frac{\Lsymb{K}{\grp}}{\Nm_{F/\Q}(\grp)}\right),
\end{equation}
where $\Lsymb{K}{\grp}$ is the Artin's symbol \cite[p.~94]{vigneras}. More explicitly, 
\[\Lsymb{K}{\grp}=\begin{cases}
1 \quad &\text{if } \grp \text{ splits in } K;\\
0 \quad &\text{if } \grp \text{ ramifies in } K;\\
-1 \quad &\text{if } \grp \text{ is inert in } K.\\
\end{cases}\]
It follows that $h(B)/h(F)$ is odd if and only if 
\begin{enumerate}
\item $\grf(B)$ divides $\grf_0:=\grf(B_0)$ and is square-free; and 
\item every prime divisor $\grp$ of $\grf(B)$
  is unramified in $K$. 
\end{enumerate}
We claim that $\grf_0$ is divisible by every dyadic prime $\grp$ of
$F$ that is unramified in $K$. It is enough to prove it locally, so we
use a subscript $\grp$ to indicate completion at $\grp$. For example,
$F_\grp$ denotes the $\grp$-adic completion of $F$.


First, suppose that $K=F(\sqrt{-1})$.  Let $\grp$ be a dyadic prime of
$F$, and $\nu_\grp$ be its associated valuation.  By
\cite[63:3]{o-meara-quad-forms}, $K/F$ is unramified at $\grp$ if and
only if there exists a unit $u\in O_{F_\grp}^\times$ such that
$-1\equiv u^2 \pmod{4O_{F_\grp}}$. Assume that this is the case. Then
$O_{K_\grp}=O_{F_\grp}+O_{F_\grp}(u+\sqrt{-1})/2$. Given $m\in \bbN$,
we have $\grp^mO_{K_\grp}\subset (B_0)_\grp$ if and only if
$\grp^m(u+\sqrt{-1})/2\subset  (B_0)_\grp$. Write $(u+\sqrt{-1})/2=a+b$ with
$a\in F_\grp$ and $b\in \grI_\eta\otimes_{O_F}F_\grp$. Then by
(\ref{eq:1}), 
\[a=\frac{u}{2}-\frac{(1+\eta)\sqrt{-1}}{2(1-\eta)}, \qquad b=\frac{\sqrt{-1}}{1-\eta}.\]
We obtain that 
\begin{equation}
  \label{eq:5}
\nu_\grp(\grf_0)=\frac{1}{2}\nu_\grp(\Nm_{K/F}(1-\eta))=\frac{1}{2}\nu_\grp(2-\eta-\bar\eta)>0. 
\end{equation}

Next, suppose that $K\neq F(\sqrt{-1})$. By
\cite[p.~82]{Conner-Hurrelbrink}, $K=F(\sqrt{-\varsigma})$ for
some totally positive element $\varsigma\in F$ with
$\nu_\grq(\varsigma)\equiv 0\pmod{2}$ for every finite prime $\grq$ of
$F$. Thus for every $\grq$, there exists a unit $u\in
O_{F_\grq}^\times$ such that $K_\grq=F(\sqrt{-u})$.  Let $\grp$ be a
dyadic prime of $F$ that is unramified in $K$. Another application of
\cite[63:3]{o-meara-quad-forms} as above shows that
\begin{equation}
  \label{eq:6}
\nu_\grp(\grf_0)=\nu_\grp(2)>0. 
\end{equation}
This conclude the verification of the claim and proves the first part
of the proposition. 

For the last part of the proposition, recall that by
\cite[Theorem~16.1]{Conner-Hurrelbrink}, one of the following holds
for $K/F$:
\begin{itemize}
\item exactly one finite prime of $F$ ramifies in $K/F$ and it is
  dyadic; or
\item no finite prime of $F$ ramifies in $K/F$. 
\end{itemize}
If $\grf_\dia=O_F$, or equivalently, every dyadic prime
of $F$ ramifies in $K$,   then $F$ has a single dyadic 
prime and it ramifies in $K/F$.  The converse is obvious. 
\end{proof}

As mentioned before, the following theorem is asserted by Vign\'eras
\cite[Remarque, p.~82]{vigneras:ens}. 

\begin{thm}\label{thm:integrality}
  Let $F$ be a totally real number field of even degree over $\Q$, and
  $D$ be the totally definite quaternion $F$-algebra unramified at
  all the finite places of $F$. Then $h(D)/h(F)$ is integral. 
\end{thm}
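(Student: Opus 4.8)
The plan is to combine the type number formula of Corollary~\ref{cor:type-numer-tot-def-unram} with the structural description of $\scrB_\odd$ furnished by Proposition~\ref{prop:minimal-odd-rlCN-order}. Writing $t(D)=\frac{1}{h(F)}\bigl(h(D)+\tfrac12\sum_{\dbr{B}\in\scrB}h(B)\bigr)$ and using that $h(B)/h(F)\in\Z$ for every CM $O_F$-order $B$ (\cite[Corollary~5.4]{Conner-Hurrelbrink}), one gets
\[
\frac{h(D)}{h(F)}=t(D)-\frac12\sum_{\dbr{B}\in\scrB}\frac{h(B)}{h(F)}.
\]
Since $t(D)\in\N$, the integrality of $h(D)/h(F)$ is equivalent to the assertion that $\sum_{\dbr{B}\in\scrB}h(B)/h(F)$ is even, that is, that $\abs{\scrB_\odd}$ is even, where $\scrB_\odd$ is the set in (\ref{eq:44}). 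If $h(F)$ is odd this already follows from Corollary~\ref{2.7} (then $\scrB=\emptyset$), so I would assume $h(F)$ even throughout.

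Next I would partition $\scrB_\odd$ according to the fraction field $K$ of $B$, noting that $B\in\scrB_\odd$ forces $h(K)/h(F)$ odd, so $K$ ranges over the finitely many CM extensions classified in \cite[Section~16]{Conner-Hurrelbrink}. For a fixed such $K$, Proposition~\ref{prop:minimal-odd-rlCN-order} identifies the orders $B\subseteq O_K$ with $h(B)/h(F)$ odd as exactly the $O_F$-orders containing $B_\dia=O_F+\grf_\dia O_K$. Their number can be computed prime by prime: away from the primes dividing $\grf_\dia$ the localization of $B_\dia$ is already maximal, while at each dyadic prime $\grp\mid\grf_\dia$ (unramified in $K$, hence inert or split) one checks directly that there are exactly two local orders between $(B_\dia)_\grp$ and $O_{K_\grp}$, namely the maximal order and $(B_\dia)_\grp$ itself. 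By the local--global correspondence for orders this shows that the number of $B\in\scrB_\odd$ with fraction field $K$ equals $2^{s(K)}$, where $s(K)$ denotes the number of dyadic primes of $F$ unramified in $K$ (so $\grf_\dia$ is the product of those $s(K)$ primes).

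Consequently $\abs{\scrB_\odd}=\sum_{K}2^{s(K)}\equiv\#\{K:s(K)=0\}\pmod 2$, and it remains to count the $K$ for which no dyadic prime of $F$ is unramified in $K$, equivalently (by the last part of Proposition~\ref{prop:minimal-odd-rlCN-order}) those with $B_\dia=O_K$. By \cite[Theorem~16.1]{Conner-Hurrelbrink} at most one finite prime of $F$ ramifies in such a $K$, and it is dyadic; hence $s(K)=0$ can occur only when $F$ has a single dyadic prime $\grp$ and $K/F$ is ramified precisely at $\grp$. In particular, if $F$ has two or more dyadic primes then $s(K)\ge 1$ for every $K$ and $\abs{\scrB_\odd}$ is automatically even.

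The remaining, and genuinely delicate, case is that of a single dyadic prime $\grp$, where one must show that the number of CM extensions $K/F$ ramified exactly at $\grp$ (among finite primes) with $h(K)/h(F)$ odd is even; I expect this to be the main obstacle. A useful constraint is that all these $K$, together with the everywhere finitely unramified ones, have $\ker(i_{K/F})=\Pic(O_F)[2]\simeq\zmod{2}$ by (\ref{eq:3}), so they all capitulate the \emph{same} unique $2$-torsion class. I would exploit this, together with the genus-theoretic parametrization of \cite[Section~16]{Conner-Hurrelbrink} and the cyclicity of $\Pic(O_F)[2^\infty]$ forced there, to produce a fixed-point-free involution on this set of extensions (obtained by twisting the defining totally negative square class by an appropriate unit or $\grp$-uniformizer), thereby forcing the count to be even and completing the proof.
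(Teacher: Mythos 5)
Your proposal follows the paper's own strategy essentially verbatim through all but the last step: you reduce integrality of $h(D)/h(F)$ via Corollary~\ref{cor:type-numer-tot-def-unram} and the integrality of each $h(B)/h(F)$ to the parity of $\abs{\scrB_\odd}$; you partition $\scrB_\odd$ by the fraction field $K$ (necessarily with $h(K)/h(F)$ odd, so $K$ ranges over the finitely many extensions of \cite[Section~16]{Conner-Hurrelbrink}); and by Proposition~\ref{prop:minimal-odd-rlCN-order} the contribution of each $K$ is $2^{\omega(\grf_\dia)}$, exactly as in (\ref{eq:7}). Your local count of orders between $B_\dia$ and $O_K$ at the primes dividing $\grf_\dia$ is correct, and so is the conclusion that everything hinges on the single remaining case: $F$ has a unique dyadic prime $\grp$, and one must show that the CM-extensions $K/F$ with $h(K)/h(F)$ odd in which $\grp$ ramifies are even in number.

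At that point, however, your argument stops being a proof. The sentence ``I would exploit this \dots to produce a fixed-point-free involution \dots thereby forcing the count to be even'' is an unexecuted plan, which you yourself flag as ``the main obstacle'': no involution is actually defined, and nothing is verified about a twist by a unit or $\grp$-uniformizer preserving the odd-relative-class-number condition, staying within CM-extensions, or being fixed-point free. This missing step is precisely the nontrivial arithmetic input. The paper closes it not by an involution but by citing \cite[Theorem~16.1 and Corollary~16.2]{Conner-Hurrelbrink}: if some $K$ with odd relative class number has $B_\dia=O_K$ (equivalently, $F$ has a single dyadic prime $\grp$ ramifying in $K$), then $F$ admits \emph{exactly two} CM-extensions with odd relative class number, and $\grp$ ramifies in both; hence each contributes a single order and $\abs{\scrB_\odd}=2$. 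Note that this is strictly stronger than what your hoped-for involution would yield (the count is exactly $2$, not merely even), and conversely your sketch gives no independent route to it. To complete your proof you should either invoke that corollary of Conner--Hurrelbrink or supply an actual construction and verification of the involution; as written, the final case is a genuine gap.
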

\begin{proof}
The theorem follows from Corollary~\ref{endo.4} if  $h(F)$ is odd. Suppose that
  $h(F)$ is even.  We show that the cardinality of the set $\scrB_\odd$ in
  (\ref{eq:45}) is
  even. 
If $\Pic(O_F)[2]\not\simeq \zmod{2}$, then $\scrB_\odd=\emptyset$ by
the proof of Proposition~\ref{prop:minimal-odd-rlCN-order}. Suppose
further that $\Pic(O_F)[2]\simeq \zmod{2}$, and $K/F$ is a
CM-extension with odd relative class number $h(K)/h(F)$. Let
$\omega(\grf_\dia)$ be the number of prime factors of
$\grf_\dia$. Then
\begin{equation}
  \label{eq:7}
\abs{\{B\mid B_\dia\subseteq B\subseteq
O_K\}}=2^{\omega(\grf_\dia)}.  
\end{equation}
If $B_\dia\neq O_K$, then $\omega(\grf_\dia)>1$ and the right hand
side of (\ref{eq:7}) is even.  If $B_\dia=O_K$, then $F$ has a
single dyadic prime $\grp$, and it ramifies in $K$. By
\cite[Theorem~16.1]{Conner-Hurrelbrink} and 
\cite[Corollary~16.2]{Conner-Hurrelbrink}, $F$ admits exactly two
CM-extensions with odd relative class numbers,  and $\grp$ ramifies in
both of them. It follows that $\abs{\scrB_\odd}=2$ in this
case. Therefore, $\abs{\scrB_\odd}$ is even in all cases, as claimed. 
Now the theorem follows from
Corollary~\ref{cor:type-numer-tot-def-unram}. 
\end{proof}

\section*{Acknowledgments}
The second named author is grateful to Paul Ponomarev for answering
his questions on type numbers. 
The manuscript was prepared during the authors' visit
at M\"unster University. They thank Urs Hartl and the institution for
warm hospitality and excellent research environment.  
J.~Xue is partially supported by the 1000-plan program for young talents and
Natural Science Foundation grant \#11601395 of PRC. 
Yu is partially supported by the MoST grants 
104-2115-M-001-001MY3 and 107-2115-M-001-001-MY2.

\bibliographystyle{hplain}
\bibliography{TeXBiB}
\end{document}